\newtheorem{prop}{Proposition}
\newtheorem{thm}{Theorem}
\newtheorem{lem}{Lemma}
\newtheorem{conj}{Conjecture}
\theoremstyle{definition}
\newtheorem{defin}{Definition}
\theoremstyle{remark}
\newtheorem{rem}{Remark}
\DeclareMathOperator{\Sing}{Sing}
\DeclareMathOperator{\Sep}{Sep}
\DeclareMathOperator{\Per}{Per}
\def\t {\tilde }
\def\g {\gamma }
\def\G {\Gamma }
\def\e {\varepsilon }
\def\eps {\varepsilon }
\def\zz{\mathbb{Z}}
\def\bbZ{\mathbb{Z}}
\def\rr{\mathbb R}
\def\bbR{\mathbb R}
\def\nn{\mathbb N}
\def\C{\mathcal C}
 \def\ph {\varphi }
 \def\om{\omega }
  \def\nbd{neighborhood }
    \def\nbds{neighborhoods }
\def\lc{limit cycle }
    \def\be{\begin{equation}}
     \def\ee{\end{equation}}
    \def\st{such that }
   \def\tes{there exists }
       \def\g{\gamma }
\def\t{\tilde }
\def\lbs{large bifurcation support }
\def\t{\tilde }
\def\vf{vector field }
\def\d{\delta }
\def\a{\alpha}
\def\nbds{neighborhoods }
\def\C{\mathcal C}
\def\P{\mathcal P}
\def\diffeo {diffeomorpism }
\def\lb {local bifurcations }
\def\nb {nonlocal bifurcations }
\def\vfs {vector fields }
\def\vf {vector field }
\def\bif {bifurcation }
\def\tes {there exists }
\def\pc {polycycle }
\def\pmap {Poincaré map }
\def\pc {\P \C }
\def \pmap {Poincaré map }
\def \lbs {large bifurcation support }
\def \nccs {transversal loops }
\def\ns {non-synchronized }
\def\La {\Lambda }
\title{Global bifurcations in generic one-parameter families \\with a parabolic cycle on $S^2$}
\author{N. Goncharuk, \thanks{ Cornell University }\
\and{Yu. Ilyashenko\thanks
    {National Research University Higher School of Economics, Russia}\  \thanks{ Independent University of Moscow}\ \thanks{ The authors were supported in part by the grant RFBR 16-01-00748}} \and {N. Solodovnikov\thanks
    {National Research University Higher School of Economics, Russia}\ }}
\date{}
\begin{document}

\maketitle

\begin{abstract} We classify global bifurcations in generic one-parameter local families of \vfs on $S^2$ with a parabolic cycle. The classification is quite different from the classical results presented in  monographs on the bifurcation theory. As a by product we prove that generic families described  above are structurally stable.
\end{abstract}

Key words: bifurcations, polycycles, structural stability, sparkling saddle connections

 Mathematics subject classification: 34C23, 37G99, 37E35

\section{Introduction}   \label{sec:intro}

\subsection{Main results}

This article is a part of a larger investigation whose main goals are:

\begin{itemize}
  \item To prove structural stability of generic one-parameter families of \vfs in the two-sphere;
  
  \item To give a complete classification of the bifurcations in these families with respect to the weak equivalence relation (the definition is recalled below).
\end{itemize}

These goals are achieved in \cite{IS}, \cite{St}, and the present paper.

Structural stability result is well expected. It was predicted in \cite{S}; the sketch of the proof (of another but close result) was given in \cite{MP} with the note: ``Full proofs will appear in a forthcoming paper.'' To the best of our knowledge, that paper was not written. Here we prove structural stability for unfoldings of parabolic limit cycles, which constitutes the first of the two main results of our paper.

Complete classification of the bifurcations seems to be quite unexpected. Global bifurcations in generic families that unfold a vector field with a separatrix loop are characterized by a finite set on a circle considered up to a homeomorphism \cite{IS}. 
Global bifurcations in generic families that unfold a vector field with a parabolic cycle are characterized by two finite sets on a coordinate circle $\rr/ \zz$,  considered up to a certain equivalence relation (see Definition \ref{def:seteq}).
The latter result is the second main theorem of our paper. The precise statements follow (see Theorem \ref{thm:stab} and Theorem \ref{thm:class}).

Together with \cite{IS} and \cite{St}, this paper achieves the goals stated at the beginning, and thus completes the study of global bifurcations in one-parameter families.

These results are a part of a large program of the development of the global bifurcation theory on the sphere outlined in \cite{I16}.  There was a belief, formulated by V.Arnold as a conjecture in \cite[Sec. I.3.2.8]{AAIS}, that a generic family is structurally stable up to a weak equivalence: close finite-parameter families are weakly equivalent.
This natural conjecture turns out to be false for 3-parameter families, as was proved in a recent work \cite{IKS} (with weak equivalence replaced by moderate equivalence, which is a technical difference). Namely, the authors prove that the moderate classification of families with “tears of the heart” polycycle has numerical moduli, and generic family of this class is \emph{not} structurally stable. The effect is due to \emph{sparkling saddle connections} that accumulate to the polycycle; their order is different for close families, which implies the statement.

Now the following  problems arise:

\begin{itemize}
\item Find out whether 2-parameter families of vector fields on $S^2$ are structurally stable  (up to the weak equivalence);

\item Classify their bifurcations;

\item Distinguish structurally stable three-parameter families from the unstable ones, and find new examples of structurally unstable three-parameter families.
\end{itemize}

These problems are natural steps that follow the present paper. Let us pass to the detailed presentation.

\vskip 1 cm

By default, \emph{\vfs } below are infinitely smooth \vfs on $S^2$ with isolated singular points, and \emph{families} are families of \vfs on $S^2$. The sphere is oriented, and all the homeomorphisms of $S^2$ under consideration preserve orientation.

\subsection{Vector fields of class $PC$}   \label{subsec:pc}

\begin{defin}   \emph{A  $C^2$ \vf of class $PC$} is a \vf with a parabolic limit cycle $\g $ and no other degeneracies. Namely,
the following assumptions hold:
\begin{itemize}
\item all the singular points and limit cycles of the \vf except for $\g $ are hyperbolic;

\item the \vf has no saddle connections;

\item the parabolic cycle $\g $ is of multiplicity 2, that is, its \pmap has the form $x \mapsto x + ax^2 + \dots , \ a \ne 0$.
\end{itemize}
\end{defin}

Vector fields of class $PC$ form an immersed Banach manifold of codimension one in the space of $C^r$-smooth \vfs on $S^2$, $r\ge 3$, see \cite[Proposition 2.2]{S}.
\subsection{Structural stability}  \label{subsec:stst}

Let us recall some basic definitions.

\begin{defin}
	Two vector fields $v$ and $w$ on $S^2$ are called \emph{orbitally topologically equivalent}, if there exists a homeomorphism $S^2\to S^2$ that 
	links the phase portraits
	of $v$ and $w$, that is, sends orbits of $v$ to orbits of $w$ and preserves their time orientation.
\end{defin}

In this article, we consider one-parameter families of vector fields on $B\times S^2$. Here $B\subset \bbR$ is the base of the family. We work with local families with bases $(\bbR, 0)$, in the following sense. 
\begin{defin}
     A \emph{local family} of vector fields at $\e = 0$ is a germ at $\{0\} \times S^2$ of a family on $B \times S^2$, where $B \owns 0$, $B \subset \rr$ is open.
\end{defin}
\begin{defin}
    \label{def:unfol}
    An \emph{unfolding} of a vector field $v$ is a local family for which $v$ corresponds to zero parameter value.
    We say that this family \emph{unfolds} the vector field.
\end{defin}

The following definition lists the notions of equivalence for local families of vector fields that  we will use in this paper.
\begin{defin}
    \label{def:weak-equiv-1}
	Let $B$, $B'$ be topological balls in $\mathbb{R}$ that contain $0$.
	Two local families of vector fields $\{v_{\alpha}, \alpha \in B \}$,
	$\{w_{\beta},\beta\in B' \}$ on $S^2$ are called 
	\begin{itemize}
	 \item 	\emph{weakly topologically equivalent} if there exists a map
	\begin{align*}
		H&\colon
		B\times S^2 \to B\times S^2,&H(\alpha,x) &= (h(\alpha),H_{\alpha}(x))
	\end{align*}
	such that $h$ is a homeomorphism of the bases, $h(0)=0$, and for each $\alpha\in B$ the map $H_{\alpha}:S^2\to S^2$ is a
	homeomorphism that takes the phase portrait of $v_{\alpha}$ to the phase portrait of $w_{h(\alpha)}$. 
	
	\item $sing$-equivalent if $H$ is continuous on the union of all the singular points and hyperbolic limit cycles of the vector field $v_0$.
	
	\item \emph{strongly topologically equivalent} provided that the map $H$ above is continuous.
	\end{itemize}
\end{defin}
 Weak equivalence is also called mild equivalence in some sources.

\begin{defin}
	A local family of \vfs  is called \emph{weakly structurally stable} if it is weakly topologically equivalent to any nearby family.
\end{defin}

\begin{thm} \label{thm:stab} A generic one-parameter unfolding of a generic \vf of class $PC$ is weakly structurally stable.
\end{thm}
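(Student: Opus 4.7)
The plan is to reduce weak structural stability to two separate local statements---one away from the parabolic cycle $\gamma$ and one in its neighborhood---and then to glue them. Let $v_\varepsilon$ denote the given generic one-parameter unfolding of a generic field $v_0 \in PC$, and let $\tilde v_\varepsilon$ be an arbitrary nearby family.

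First I would fix a small tubular neighborhood $U$ of $\gamma$ and set $W = S^2 \setminus U$. By the definition of $PC$, on $W$ the field $v_0$ has only hyperbolic singular points and hyperbolic limit cycles and no saddle connections, so $v_0|_W$ is structurally stable by the classical Andronov--Pontryagin theorem. Hyperbolic singularities and hyperbolic cycles persist under perturbation both of $\varepsilon$ and of the family itself, and the absence of saddle connections is an open condition. Hence I can construct a family of homeomorphisms $H^W_\varepsilon \colon W \to W$ conjugating the phase portrait of $v_\varepsilon|_W$ to that of $\tilde v_{h(\varepsilon)}|_W$, where $h$ is a homeomorphism of the base to be determined.

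Near $\gamma$ I would use a Takens-type normal form for saddle-node bifurcations of diffeomorphisms, applied to the Poincaré map of $v_\varepsilon$ on a transversal to $\gamma$. A smooth chart change together with a reparametrization of $\varepsilon$ brings this family of Poincaré maps to a standard topological model consisting of two hyperbolic fixed points that merge at $\varepsilon = 0$ and disappear for $\varepsilon$ on one side. The same normal form applies to $\tilde v_\varepsilon$ after an analogous reparametrization, and composing the two reparametrizations produces a candidate for $h$. Then $H^U_\varepsilon$ is obtained by suspending a fiberwise topological conjugacy between the two normal-form Poincaré maps along $\gamma$: for parameter values where the cycle splits, $H^U_\varepsilon$ identifies the two pairs of hyperbolic cycles; past the bifurcation, it identifies the narrow ``corridors'' through which nearby trajectories pass.

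The main obstacle is the gluing. The map $H^U_\varepsilon$ on $\partial U$ must agree with $H^W_\varepsilon$ on $\partial U$, and the two base reparametrizations arising from the construction in $U$ and in $W$ must be made to coincide. The compatibility of base reparametrizations is mild: for a generic one-parameter unfolding the parabolic parameter value is a single codimension-one condition, and it is automatically matched by the construction. The spatial gluing is more delicate. The finitely many separatrices of $W$-saddles that enter $U$ intersect $\partial U$ in a finite set of points, and $H^W_\varepsilon$ must be arranged to send these points on $\partial U$ for $v_\varepsilon$ to the analogous points for $\tilde v_{h(\varepsilon)}$; there is enough freedom in the Andronov--Pontryagin-style construction on $W$ to do this. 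The crucial point that makes gluing possible at all is that, in a one-parameter generic family, no \emph{sparkling saddle connections} of the type described in \cite{IKS} can occur: the combinatorics of separatrix returns to the transversal of $\gamma$ is rigid for close families, so the order and pattern of entry/exit points agree and the piecewise map $H_\varepsilon$ assembled from $H^U_\varepsilon$ and $H^W_\varepsilon$ is a genuine fiberwise homeomorphism, giving the required weak equivalence.
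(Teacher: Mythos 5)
There is a fundamental error in the final paragraph that undermines the whole approach. You claim that ``in a one-parameter generic family, no \emph{sparkling saddle connections} of the type described in \cite{IKS} can occur.'' This is the opposite of what happens: sparkling saddle connections were discovered by Malta and Palis precisely in one-parameter unfoldings of parabolic cycles, and they are the central bifurcation phenomenon here. As $\varepsilon$ decreases to $0$, an infinite sequence of saddle connections $\varepsilon_{kmn}$ accumulates at $0$, coming from separatrices of the $W$-saddles that spiral into the annulus $U$ and, for $\varepsilon > 0$, pass through it. The entire difficulty of the theorem is to show that the \emph{order} of this infinite sequence is the same for the two families, and your proposal dismisses this as a non-issue. (The reference \cite{IKS} is also misplaced: it concerns a 3-parameter counterexample; the relevant 1-parameter phenomenon is in \cite{MP}.)

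Because of this, your gluing step is unachievable as stated. The map $H^U_\varepsilon$ on $U$ must send the intersection points of $v_\varepsilon$-separatrices with $\partial U$ to the corresponding intersection points for $\tilde v_{h(\varepsilon)}$, \emph{compatibly with the Poincaré transition map} $\Delta_\varepsilon\colon C^-\to C^+$ through the annulus. Whether this is possible for a single homeomorphism $h$ of the base, uniformly for all small $\varepsilon > 0$, depends on a combinatorial invariant that your proposal never introduces: the pair of characteristic sets $A^\pm$ on the coordinate circles $S^1_\pm$ (built from Takens' generator and its time function), together with their cyclic order of differences $\Lambda(A^\pm)$. Without the extra genericity assumption that $A^\pm$ is non-synchronized, the order of the $\varepsilon_{kmn}$ is not even determined, and the theorem as you state it (without this hypothesis) is not what the paper proves; the paper's Theorem~\ref{thm:stab} silently carries this extra genericity, spelled out in Theorem~\ref{thm:stab1}. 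You also need a quantitative continuity statement — that close $PC$ fields have close characteristic sets (the paper's Proposition~\ref{prop:close}, which rests on a parametric Takens theorem) — to deduce equivalence of the characteristic data for nearby families. Finally, ``enough freedom in the Andronov--Pontryagin-style construction on $W$'' to match the boundary data is a real lemma (the paper's flexibility-of-automorphisms Lemma~\ref{lem:flex}), not a remark; it requires a careful analysis of canonical regions and is what lets you adjust $H^W_\varepsilon$ on $\partial U$ without destroying the conjugacy in $W$. In short: the decomposition $S^2 = W\cup U$ is the right start, but the hard content — the characteristic-set invariant, its continuity, the non-synchronization hypothesis, the order of sparkling connections, and the boundary-matching lemma — is exactly what is missing.
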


Vector fields from this theorem have to satisfy an extra genericity assumption in addition to those included in the definition of class $PC$. This assumption is presented in Sec. \ref{subsec:nonsyn}, where an improved version of Theorem \ref{thm:stab} is stated.

The genericity assumption for the unfolding in Theorem \ref{thm:stab} is \emph{transversality to} $PC$. 

The previous theorem is wrong if the weak equivalence is replaced by the strong equivalence, see \cite{MP}.

\begin{rem}
 
Sing-equivalence has the following property. Let $V = \{v_\e\}$ and $W = \{w_\d\}$ be two sing-equivalent families.
For any singular point $O$ of $v_0$, let $O(\e )$ be a singular point of $v_\e $ depending continuously on $\e $ and such that $O(0) = O$. Put $\tilde O = H_0(O)$ and let $\tilde O(\delta)$ be a similarly defined singular point of $w_{\delta}$. Then
\begin{equation}\label{eqn:sing1}
  H_\e(O(\e)) = \t O(h(\e)).
\end{equation}
The same holds for limit cycles of $v_{\eps}$, $w_{\delta}$.

Sing-equivalence is designed to imply this property.

\end{rem}

\subsection{Time function}     \label{subsec:time}

The following arguments are based on the heuristic principle: local dynamics near an equilibrium point usually
determines a canonical chart at this point.

Let $v$ be a \vf of class $PC$, $\g $ be its parabolic limit cycle.
Let $\G $ be a cross-section to $\g $, $x$ a smooth chart on it with $x(\g \cap \G ) = 0$.
Let $P$ be a germ of the corresponding Poincaré map, $P(x) = x + ax^2 + \dots$. By assumption, $a \ne 0$. Rescaling $x$ and changing sign if needed we will make $a = 1$; so we will assume that 
\begin{equation}\label{eqn:pmap}
P(x)=x+x^2+\dots
\end{equation}

\begin{thm}    [Takens, \cite{T}]  \label{thm:pmap} Let $P $ be a $C^{\infty}$-smooth parabolic germ of the form \eqref{eqn:pmap}.  Then it has an infinitely smooth generator: there exists a germ of a vector field $u(x) = x^2 + \dots $ at zero,
whose time one phase flow transformation equals $P$:
$$
P = g^1_u.
$$
The smooth generator $u$ of $P$ is unique.
\end{thm}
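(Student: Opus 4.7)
The plan is to produce the smooth generator $u$ in three stages: first as a formal power series, then as a smooth germ realizing that jet, and finally correcting the remaining flat error through a Fatou-coordinate (time-function) construction on each side of the fixed point.

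\emph{Formal step.} Writing $u(x) = x^{2} + \sum_{k \ge 3} a_k x^k$ and expanding
$g^{1}_u(x) = x + u(x) + \tfrac{1}{2}(u\,u')(x) + \dots$
as a formal series, the coefficient of $x^{k+1}$ in $g^{1}_u(x) - x$ equals $a_k$ plus a polynomial in $a_{3}, \dots, a_{k-1}$. Matching with the Taylor coefficients of $P(x) - x$ yields a triangular system that determines all $a_k$ uniquely. This produces a unique formal solution $\hat u \in x^{2}\bbR[[x]]$ and shows that the $\infty$-jet at $0$ of any smooth generator must equal $\hat u$. By Borel's lemma, pick a $C^{\infty}$ field $u_0$ with $j^{\infty}_0 u_0 = \hat u$. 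Then $g^1_{u_0}$ and $P$ agree to infinite order at $0$, so $g^1_{u_0} = P + R$ with $R$ flat at $0$.

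\emph{Flat correction via the time function.} Work separately on $(-\delta,0)$ and $(0,\delta)$; on each, $u_0$ is nonvanishing, so
\[
\Psi_0(x) = \int_{x_0}^{x} \frac{dy}{u_0(y)}
\]
is a smooth diffeomorphism onto a neighborhood of $\infty$ that conjugates $g^{t}_{u_0}$ to $z \mapsto z+t$. In the chart $\Psi_0$, the map $P$ becomes $\tilde P(z) = z + 1 + r(z)$ with $r$ flat at $\infty$. I would then solve the Abel equation $\Psi \circ P = \Psi + 1$ on each side by the standard telescoping series (Fatou's construction): on the attracting side
\[
\Psi(x) = \Psi_0(x) - \lim_{n\to\infty}\bigl(\tilde P^{n}(\Psi_0 x) - n - \Psi_0 x\bigr),
\]
the repelling side being symmetric under $P^{-1}$. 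Flatness of $r$ at infinity forces $C^{\infty}$ convergence and yields $\Psi$ smooth on each one-sided punctured neighborhood, with $\Psi - \Psi_0$ flat at $0$. Setting $u(x) := 1/\Psi'(x)$ gives a smooth one-sided vector field whose time-one flow equals $P$.

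\emph{Gluing and uniqueness.} The two one-sided fields $u$ constructed on $(-\delta,0)$ and $(0,\delta)$ both have the same $\infty$-jet $\hat u$ at $0$ (since $u - u_0$ is flat on each side), so they glue to a $C^{\infty}$ field on a full neighborhood of $0$. For uniqueness, differentiating the identity $\int_{x}^{P(x)} dy/u(y) = 1$ gives the cocycle relation $u(P(x)) = P'(x)\,u(x)$, so any two smooth generators $u,\tilde u$ satisfy $(\tilde u/u)\circ P = \tilde u/u$; the ratio is $P$-invariant, tends to $1$ at $0$ (equal $\infty$-jets), and orbits of $P$ accumulate at $0$, hence $\tilde u \equiv u$.

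\emph{Main obstacle.} The delicate point is the control of $\Psi$ at $x \to 0$, i.e.\ at $z \to \infty$ in the Fatou chart: one needs that the telescoping correction is not merely $C^{0}$ but $C^{\infty}$ flat in $x$ at $0$, so that $u = 1/\Psi'$ has the prescribed jet $\hat u$ and extends smoothly across $0$. All other ingredients (formal solvability, Borel realization, one-sided Fatou coordinate) are routine; the genuinely substantive estimate is the uniform $C^{k}$-decay of the iterates of a flat perturbation of a translation, which transfers the flatness of $r$ at $\infty$ to the flatness of the correction at $0$.
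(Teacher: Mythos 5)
Your proposal is correct in outline and relies on the same core mechanism as the argument the paper recalls in Section~4.2 (attributed to Takens, Malgrange, Ilyashenko): pass to a rectifying (Fatou) chart near infinity, reduce to a flat perturbation of the translation $t\mapsto t+1$, and solve the Abel equation $h = h\circ\hat P + \hat R$ by the telescoping series $\sum \hat R\circ\hat P^k$ separately on the two half-lines, then pull back the unit field. The genuine difference is the entry point. You Borel-realize the full formal generator $\hat u$ first, so that $g^1_{u_0}=P+R$ with $R$ flat at $0$; in the $t$-chart this makes $\hat R$ flat at infinity, and the $C^\infty$-decay of the iterated corrections is then straightforwardly inherited, so the constructed $\Psi-\Psi_0$ is flat at $0$ and $u=1/\Psi'$ extends smoothly. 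The paper instead works with the explicit $3$-jet normal form $u_a = x^2/(1-ax)$, so the remainder is only $O(x^4)$ (equivalently $\hat R = O(t^{-2})$); this suffices for convergence of the Abel series but makes the $C^\infty$-at-$0$ statement a genuine extra step, which the paper explicitly declines to carry out ("we stop here... see [M], [I90]"). So your route trades the explicit formula for a cleaner flatness bookkeeping, and your final remark correctly isolates what is substantive either way: uniform $C^k$-control of the iterates of a flat perturbation of a translation. One small correction to your uniqueness paragraph: orbits of $P$ accumulate at $0$ only on the attracting side; on the repelling side you need orbits of $P^{-1}$, which together with the $P$-invariance of $\tilde u/u$ and its limit $1$ at $0$ (from formal uniqueness, as you note) gives $\tilde u\equiv u$ on both sides.
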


Let $\G $ be a cross-section to $\g $, put $O = \G \cap \g$, and let $x$ be a chart on $\G $ with $x(O) = 0$ in which $P$ has the form \eqref{eqn:pmap}. Let $\G^+$ and $\G^-$ be the parts of $\G $ where $x \ge 0$ and $x \le 0$ respectively. Define the \emph{time functions} on $\G^+$ and $\G^-$, unique up to adding a~constant, in the following way. Choose two small numbers $b^-<0<b^+$,  and let $T^+(b)$ be the time of the motion from the point $b^+$ to the point $b\in \G^+ \setminus \{0\}$ along the solution of the equation $\dot x = u(x)$; let $T^-(b)$ be  the time of the  motion from the point $b^-$ to the point $b \in \G^- \setminus \{0\}$ along the solution of the equation $\dot x = u(x)$. In other words, 
$$
T^+(b) = \int^b_{b^+} \frac {dx }{u(x )}  \mbox{ for } b > 0, \ b \in \Gamma^+,
$$
$$
T^-(b) = \int^b_{b^-} \frac {dx }{u(x )}  \mbox{ for } b < 0,  \ b \in \Gamma^-.
$$

\subsection{Large bifurcation support}    \label{subsec: lbss}

The bifurcations in a local family that unfolds a \vf of class $PC$ are not only reduced to splitting and vanishing of the \lc $\g $. They also produce so called \emph{sparkling saddle connections} discovered by Malta and Palis \cite{MP}.

Suppose that the \vf $v \in PC $ has two saddles $E$ and $I$ on different sides of $\g $ whose separatrices wind
towards $\g $ in the positive and negative time respectively. The saddle $E$ lies outside, and the saddle $I$ inside $\g $; $E$ and $I$ stand for ``exterior'' and ``interior''.
Let $V = \{ v_\e \} $ be an unfolding of $v$ transversal to $PC $, $v_0 = v$,
$E(\e )$ and $I(\e )$ be the saddles of $v_\e $ continuous in $\e $ and \st $E(0) = E, I(0) = I$.
Let $\g $ disappear for $\e > 0$. Then \tes a sequence $\e_n \searrow 0$ \st the \vfs $v_{\e_n}$
have saddle connections between the saddles $E(\e_n), I(\e_n)$. These connections are called
\emph{sparkling saddle connections}.

This motivates the following definition.

\begin{defin}  \label{def:lbs}    Let $v \in PC $. \emph{The \lbs of} $v$ is the union of the parabolic cycle $\g $ and the closures of all the separatrices of the hyperbolic saddles that wind towards~$\g $ in the negative or positive time.
\end{defin}

\begin{rem}
Large bifurcation supports are defined in a much more general setting in \cite{I16}.
\end{rem}

The term is motivated by the heuristic statement that all the bifurcations that occur in the generic unfolding of $v$ are determined by those in a \nbd of the \lbs of $v$. For \vfs of class $PC$ this follows from Theorem~\ref{thm:class} below. In the general setting it is proved in \cite{GI}, work in progress.

The large bifurcation supports for the \vfs of class $PC$ are characterized by two so called marked finite sets on a circle.

\subsection{Marked finite sets}    \label{sub:marfin}

Large bifurcation supports may be rather complicated, see Figure \ref{fig:lbs}.

\begin{figure}
    \centering
\includegraphics[scale=0.5]{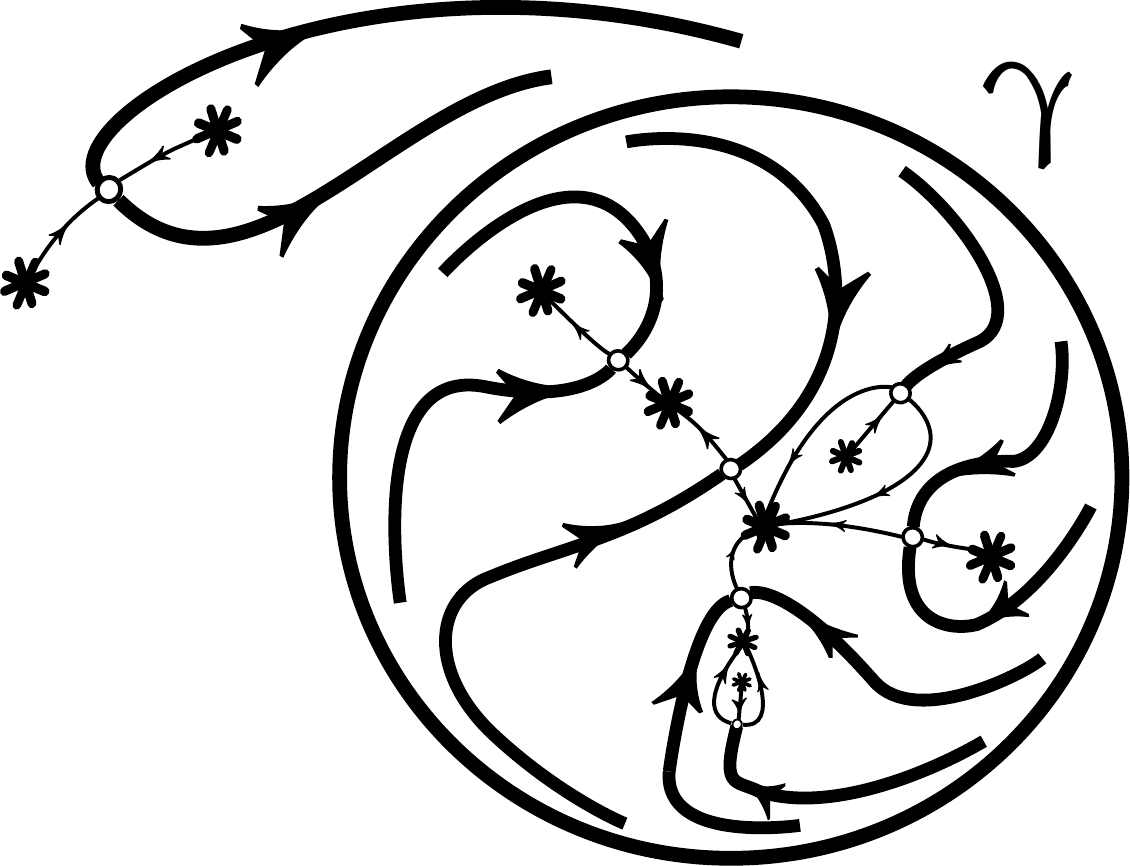}
\caption{Large bifurcation support of a $PC $ vector field (shown in thick curves). Here and below asterisks show sinks and sources of a vector field
}\label{fig:lbs}
\end{figure}

Yet they admit  a simple combinatorial description. The \pmap on $\G^-$, as well as on $\G^+$, in the charts $T^\pm $, is the mere translation by 1:
$$
T^\pm (P(x)) = T^\pm (x) + 1.
$$
Hence, the set of orbits of $P$ on $\G^\pm $ is a coordinate circle $S^1_\pm = \rr^+/\zz $, the coordinate is
$T^\pm \pmod\zz$. Note that this coordinate is defined uniquely up to an additive constant that depends on the choice of $b^{\pm}$ in Sec. \ref{subsec:time}.

Denote by $D^+$ the set of all intersection points of the
separatrices that wind toward $\g $ with the half open segment $[b^+, P(b^+)), \ b^+ \in \G^+$.
In the same way the set $D^-$ is defined for $b^- \in \G^-$. Let
\begin{equation}\label{eqn:apm}
A^\pm = T^\pm(D^\pm )\pmod\zz, \ A^\pm \subset S^1_\pm.
\end{equation}

Let us define the equivalence relations on $D^+$ and $D^-$. Namely, two points of $D^+$ (or $D^-$) are equivalent if they belong to the separatrices of the same saddle. This induces  equivalence relations  on $A^+$ and $A^-$. Note that any two equivalence classes $(a,b), (c,d)$ are not intermingled on the oriented circle: either both points $c,d$ belong to an arc from $a$ to $b$, or none of them.

\begin{defin}
\label{def:charset}
The equivalence relation on a finite set on a circle is called \emph{proper} if
each equivalence class consists of one or two points, and any two classes of two points each are not intermingled in the sense explained above. 

A finite set on a circle with a proper equivalence relation is called \emph{marked}.
\end{defin}
Thus for any \vf $v \in PC $ a pair of marked sets $A^\pm (v)$ on coordinate circles is defined.

\begin{defin}
The marked sets $A^\pm (v)$ are called the \emph{characteristic pair} (of sets) for the vector field $v\in PC$.
\end{defin}

\begin{rem}
Recall that the time coordinates on coordinate circles are defined modulo additive constants that depend on the choice of $b^{\pm}$ in Sec. \ref{subsec:time}.  So the characteristic sets $A^{\pm}(v)$ are defined modulo additive constants.
\end{rem}

\subsection{Non-synchronization condition}  \label{subsec:nonsyn}

\begin{defin}  \label{def:nonsyn} Two finite  sets $A^+,A^- \subset S^1$ are \emph{non-synchronized} provided that for any $\alpha \in \rr $,
\begin{equation}\label{eqn:nsync}
\# \left((A^+ + \alpha) \cap A^-\right) \le 1.
\end{equation}
\end{defin}

We can now give an explicit form of \autoref{thm:stab}.

\begin{thm}   \label{thm:stab1}  Suppose that characteristic sets $A^{\pm}(v)$ for  a vector field  $v \in PC$ are non-synchronized. Then any local one-parameter unfolding of $v$ transversal to the Banach manifold    $PC $ is structurally stable in the space of one-parameter families with the $C^1$ metric on it.
\end{thm}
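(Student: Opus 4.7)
\emph{Strategy.} The aim is to build a weak equivalence $H(\e,x)=(h(\e),H_\e(x))$ between the given unfolding $V=\{v_\e\}$ of $v\in PC$ and an arbitrary $C^1$-close one-parameter family $W=\{w_\d\}$, by decomposing $S^2$ into a Morse--Smale region far from $\g$ and an annular neighborhood of $\g$ that carries the sparkling saddle connections. Outside a small invariant neighborhood $U$ of the large bifurcation support of $v_0$, the field $v_\e$ is Morse--Smale uniformly in $\e$ (hyperbolic singular points and cycles, no saddle connections), and the standard Peixoto-type structural stability with parameter produces, for each pair $(\e,\d)$ close to $(0,0)$, an orbital equivalence $\tilde H_{\e,\d}\colon S^2\setminus U\to S^2\setminus U'$ depending continuously on $(\e,\d)$ and mapping the continuation of each hyperbolic orbit of $v_\e$ to its counterpart in $w_\d$. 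The base map $h$ and the way the annular part is glued to $\tilde H$ are fixed in the next steps.

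\emph{Canonical charts and parameter synchronization.} Near $\g$, apply \autoref{thm:pmap} to represent the Poincar\'e map as the time-one flow of a smooth field $u$; transversality of the unfolding to $PC$ gives, after a smooth $\e$-dependent change of chart, a normal form $P_\e=g^1_{u_\e}$ with $u_\e(x)=x^2-\mu(\e)+O(x^3)$, $\mu'(0)\ne 0$. On the cycle-free side ($\mu(\e)<0$) every orbit passes from $\G^-$ to $\G^+$ in finite time and induces a shift of the time coordinate by a quantity $\a(\e)$ that is continuous and strictly monotone in $\e$ and diverges as $\e\to 0$. A sparkling saddle connection at parameter $\e$ corresponds exactly to a nonempty intersection $(A^++\a(\e))\cap A^-$ on the coordinate circle. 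The non-synchronization hypothesis forces this intersection to contain at most one point for every value of $\a(\e)$; hence the set of sparkling bifurcation values is a discrete decreasing sequence $\e_n\to 0$, and at each $\e_n$ a unique pair of saddles $(E_n,I_n)$ is instantaneously connected, with the cyclic order of these pairs determined combinatorially by the marked pair $(A^+,A^-)$.

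\emph{Assembly of the weak equivalence.} For the nearby family $W$ repeat the normalization to obtain $\mu_W,\a_W$ and characteristic sets $B^\pm$ close to $A^\pm$ in Hausdorff distance, since separatrices of hyperbolic saddles depend continuously on the field in the $C^1$ topology. Non-synchronization of $A^\pm$ is an open condition, so $B^\pm$ are also non-synchronized and the sparkling bifurcation values $\d_n$ of $W$ form an analogous discrete sequence paired with the saddles $(\tilde E_n,\tilde I_n)$ in the same combinatorial order. Define the base homeomorphism $h\colon(\bbR,0)\to(\bbR,0)$ by $h(\e_n)=\d_n$ and interpolate monotonically between consecutive values. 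Define $H_\e$ on $\G^\pm$ by sending $D^\pm(\e)$ to the corresponding set for $w_{h(\e)}$ preserving cyclic order and the marked equivalence relation, extend across the annular neighborhood of $\g$ along the trajectories of $u_\e$ and of its $W$-counterpart, and glue outside $U$ with $\tilde H_{\e,h(\e)}$ along the cross-sections. This yields the required fiberwise homeomorphism realizing weak equivalence.

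\emph{Main obstacle.} The delicate point is continuity of $H_\e$ in $\e$, in particular across each $\e_n$ and at $\e=0$: the phase portrait of $v_{\e_n}$ differs from that of $v_\e$ for $\e\in(\e_{n+1},\e_n)$ by exactly one saddle connection, and the number of turns that separatrices make around $\g$ tends to infinity as $\e\to 0$. Non-synchronization is precisely what makes the bijection $\e_n\leftrightarrow\d_n$ canonical, determined by the marked-set data rather than by an arbitrary pairing; without it one could have simultaneous multiple connections whose pairing with those of $W$ is not unique, and a homeomorphism $h$ respecting the connection structure would generally fail to exist. Making this continuity precise, together with the compatibility of the annular, cross-sectional and Morse--Smale pieces in the limit $\e\to 0$, is the technical heart of the argument.
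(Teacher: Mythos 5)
Your overall strategy matches the paper's: work in the Takens normal form near $\gamma$, read off sparkling connections from the characteristic sets, build $h$ by matching the bifurcation values, and glue the annular piece to a Morse--Smale exterior. The paper factors this through a separate Classification Theorem (their Theorem~\ref{thm:class}, part~2) plus a proximity statement (Proposition~\ref{prop:close}), but that is mostly a matter of organization; the constructions are the same.

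There are, however, three concrete gaps. First, you claim $B^\pm$ is close to $A^\pm$ ``since separatrices of hyperbolic saddles depend continuously on the field.'' That gives closeness of the \emph{raw} intersection points $D^\pm$ on the cross-section $\Gamma$, but $A^\pm=T^\pm(D^\pm)\pmod{\zz}$ is read in the time coordinate of the Takens generator $u$ of the Poincar\'e germ, and it is not at all automatic that the generator depends continuously on the germ. The paper devotes a whole subsection to proving exactly this (Proposition~\ref{prop:takpar} via the Abel equation $h=h\circ\hat P+\hat R$ and the estimates $|\hat R|\le C|t^{-2}|$, $|\hat R'|\le C'|t^{-3}|$), and this is what forces the $C^4$-closeness hypothesis; your argument silently skips it. Second, ``glue outside $U$ with $\tilde H_{\e,h(\e)}$ along the cross-sections'' hides the real issue: the equivalence you build on $D^\pm$ and the one you build on the annulus will not agree on the transversal loops $C^\pm$, and you must correct one of them by composing with a fiberwise automorphism of $v_\e|_{D^\pm}$ extending a prescribed boundary homeomorphism that fixes $\Sep v\cap C^\pm$. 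This requires the flexibility-of-automorphisms lemma (Lemma~\ref{lem:flex}, proved via the canonical region decomposition of Proposition~\ref{prop:canonreg} and Propositions~\ref{prop:extension}, \ref{prop:extension-2}); it is a genuine step, not routine gluing. Third, the obstacle you identify as the ``technical heart'' --- continuity of $H_\e$ in $\e$ across $\e_n$ and at $\e=0$ --- is a misconception: the theorem asserts \emph{weak} structural stability, i.e.\ weak topological equivalence, which requires only that each $H_\e$ be a homeomorphism of phase portraits and that $h$ be a homeomorphism of bases; no continuity of $\e\mapsto H_\e$ is required. In fact strong equivalence fails here (Malta--Palis), and the paper proves the intermediate notion of sing-equivalence (continuity only on singular points and hyperbolic cycles of $v_0$). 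So the difficulty you flag as central dissolves, while the two you glossed over are where the actual work is.
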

This theorem is proved in Sec. \ref{sec-str}.

\begin{defin}
One-parameter local families described by this theorem are called \emph{$\pc $ families}.
\end{defin}

The following realization theorem holds.

\begin{thm}  \label{thm:real1}    Let $A^\pm $ be a pair of marked non-synchronized set on the coordinate circles. Then there exists a vector field $v$ of class $PC$ such that $A^{\pm}$ are characteristic sets of $v$.
\end{thm}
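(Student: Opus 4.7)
The plan is to construct $v$ by starting from a simple model \vf carrying a parabolic cycle $\g$ and grafting in a hyperbolic saddle for each equivalence class of $A^+$ and $A^-$, controlling where the winding separatrices cross the transversal so that their time-coordinates match the prescribed sets. First, I fix a model \vf $v_{\mathrm{mod}}$ on $S^2$ with equator $\g$ a parabolic \lc of multiplicity $2$, a source at the north pole, a sink at the south pole, and no other singularities or cycles. Applying \autoref{thm:pmap}, I choose coordinates in a tubular neighborhood of $\g$ in which the Poincaré map on a fixed transversal $\G$ has the form $P(x) = x + x^2 + \dots$; I fix base points $b^\pm$ and thereby the time functions $T^\pm$ and the coordinate circles $S^1_\pm$.

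Next, I enumerate the equivalence classes of $A^+$ and $A^-$ and insert saddles one at a time. For a singleton class $\{a\} \subset A^+$, I insert into the exterior of $\g$ a hyperbolic saddle $E$ with exactly one separatrix entering the Takens \nbd through $\G^+$ at the unique point of $[b^+, P(b^+))$ whose $T^+$-coordinate equals $a \pmod \zz$; the remaining separatrices of $E$ are routed to the source/sink of $v_{\mathrm{mod}}$. For a two-point class $\{a,b\} \subset A^+$, I insert a saddle whose two separatrices wind toward $\g$ with entry coordinates $a$ and $b$. The proper-marking condition (\autoref{def:charset}) is exactly the planar-topology constraint that makes successive insertions compatible: the two winding separatrices of such a saddle, together with an arc of $\G^+$ and an arc of $\g$, bound a disk, and non-intermingling means every later saddle lies wholly inside or wholly outside this disk. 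The analogous construction inserts saddles inside $\g$ for $A^-$. All surgeries take place outside the Takens \nbd, so $\g$ remains parabolic of multiplicity~$2$ and $T^\pm$ are unchanged.

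To realize the \emph{prescribed} coordinates, I first perform the insertions with separatrices crossing $\G^\pm$ at some (arbitrary) points, and then apply an orientation-preserving diffeomorphism of $S^2$ supported off a \nbd of $\g$; such a diffeomorphism preserves the Takens normal form, hence $T^\pm$, while sliding each saddle's entry point through an entire fundamental domain of $P|_{\G^\pm}$. Because entry points are only pinned down modulo $\zz$, there is enough freedom to simultaneously place every class at its required location. A final generic perturbation off $\g \cup (\text{separatrices})$ eliminates any accidental saddle connections between two exterior or two interior saddles; connections across $\g$ cannot occur because $\g$ is a closed \lc separating the two sides. What remains is to check that the resulting $v$ has characteristic sets $A^\pm$, which is immediate by construction: the only orbits accumulating on $\g$ are the inserted separatrices, and their marking on $\G^\pm$ is precisely $(A^+, A^-)$.

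The main obstacle is the topological bookkeeping in Step~2: ensuring that the growing configuration of saddles, their non-winding separatrices, and the existing source/sink remain consistent with a planar embedding on $S^2 \setminus \g$ without forcing unintended intersections or connections. The non-intermingling condition built into the definition of a marked set is exactly the combinatorial fact that guarantees this consistency at every step; once it is used, the remaining coordinate-matching and perturbation steps are routine. (Note that the non-synchronization hypothesis is not needed for existence; it becomes relevant only in the structural-stability statement \autoref{thm:stab1}.)
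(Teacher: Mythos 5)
Your proposal is correct in spirit but organized quite differently from the paper's proof, and it glosses over a few points the paper is careful about.

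The paper factors the argument through a self-contained Realization Lemma for discs (Lemma~\ref{lem:marked}): given any marked set on $\partial D$, build a Morse--Smale field on $D$ whose separatrices hit the boundary exactly at the prescribed points (step~(a) places a saddle on an arc joining each paired class, step~(b) places an attractor in each complementary region and routes outgoing separatrices to it, step~(c) handles singleton classes by attaching a small graph). Then it builds an explicit annulus model $\dot\alpha=1,\ \dot r=-(r-2)^2$, pulls $A^\pm$ back to $C^\pm$ via the canonical coordinate $\varphi_0^\pm$, applies the lemma to the two complementary discs, glues the three pieces into a piecewise-smooth field, and removes the discontinuity by changing the smooth structure and invoking uniqueness of the smooth structure on $S^2$. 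Your plan replaces the disc lemma by iterated surgery on a single smooth field, and replaces ``place the separatrices at the prescribed coordinates'' by ``place them arbitrarily and slide with a diffeomorphism fixing the Takens neighborhood.'' Both approaches use exactly the non-intermingling property as the combinatorial input, and you correctly observe that non-synchronization plays no role here.

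Two of your steps need more care. First, you cannot insert only a saddle: the index sum on $S^2$ is $2$, so each saddle must come with a compensating source or sink (this is exactly step~(b) of the paper's lemma, where an attractor is placed inside each newly created region). Second, ``a final generic perturbation off $\gamma\cup(\text{separatrices})$ eliminates any accidental saddle connections'' is self-contradictory as stated: a saddle connection \emph{is} a separatrix, so a perturbation disjoint from separatrices cannot break it. You presumably mean to perturb away from $\gamma$ and away from the \emph{winding} separatrices, so that the other separatrices may move; that version works but should be said. A cleaner way to avoid accidental connections altogether is the paper's route: in Lemma~\ref{lem:marked} each saddle's outgoing separatrices are routed directly to distinct attractors at construction time, so no post-hoc perturbation is needed and one never has to argue that the perturbation leaves $A^\pm$ unchanged. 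If you prefer your surgery approach, the Euler-characteristic bookkeeping and the perturbation step both need to be spelled out; otherwise consider proving the disc lemma first and gluing, which localizes all the combinatorics and trades the surgery/diffeo/perturbation sequence for one change-of-smooth-structure argument at the end.
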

It is proved in Sec. \ref{subsec:mark}. The vector field set $v$ whose characteristic sets coincide with $A^{\pm}$ is in no way unique.

\subsection{Topologically equivalent \vfs of class $PC$ with non-equivalent one-parameter unfoldings}  \label{sub:noneq}

%


\begin{thm}   \label{thm:noneq}   There exist \vfs mentioned in the title of this section. More precisely, there exist topologically equivalent \vfs of class $PC$ whose generic one-parameter unfoldings are not sing-equivalent.
\end{thm}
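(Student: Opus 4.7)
The plan is to construct two vector fields $v, w \in PC$ that are orbitally topologically equivalent on $S^2$ but whose characteristic pairs of marked sets lie in distinct equivalence classes under the finer relation governing sing-equivalence of unfoldings. Non-sing-equivalence of the generic transversal unfoldings of $v$ and $w$ then follows from the classification theorem for $\pc$ families (\autoref{thm:class}, proved later in the paper). The key observation is that topological equivalence of a single $PC$ field sees the pair $(A^+,A^-)$ only modulo \emph{independent} shifts on the two coordinate circles (reflecting the free choice of base points $b^\pm$ in Section~\ref{subsec:time}), whereas sing-equivalence of a family imposes further rigidity on the relative configuration of $A^+$ and $A^-$, as the monotone base reparameterisation $h$ can only absorb a single overall shift of the corridor time.

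\textbf{Step 1: construction and topological equivalence.} Using \autoref{thm:real1}, realise a marked non-synchronized pair $(A^+_0, A^-_0)$ as a field $v\in PC$ whose global phase portrait admits no nontrivial symmetry permuting the saddles (for instance, by distinguishing them through the combinatorics of their non-parabolic separatrices). For a generic small $\alpha\in\rr$ the shifted pair $(A^+_0+\alpha,\,A^-_0)$ is again non-synchronized; realise it as a field $w\in PC$, using the flexibility of \autoref{thm:real1} to ensure that $v$ and $w$ coincide outside an arbitrarily small tubular \nbd of the parabolic cycle $\gamma$. Since the topological equivalence class of a $PC$ field is determined by its Morse--Smale skeleton together with the characteristic pair modulo independent shifts on the two circles, the resulting fields $v$ and $w$ are orbitally topologically equivalent; an explicit equivalence is the identity outside the \nbd, glued inside to the conjugation of the two parabolic germs provided by \autoref{thm:pmap} composed with a translation of the $\Gamma^+$ chart by $\alpha$.

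\textbf{Step 2: obstruction to sing-equivalence.} In any transversal unfolding, sparkling saddle connections occur at parameters $\eps_n\searrow 0$ satisfying $f(\eps_n)\equiv q_j-p_i\pmod{\zz}$, where $f(\eps)\to\infty$ is the monotone corridor transit-time function and $(p_i,q_j)\in A^+\times A^-$ labels the connected saddle pair, uniquely determined by the non-synchronization condition. Under a putative sing-equivalence $(h,H_\eps)$, the saddle-continuity of $H$ together with the rigidity of the phase portrait forces the labelling permutations of saddles to be the identity, while the monotone $h$ absorbs only a single overall additive constant on the corridor time. Consequently, sing-equivalence forces the two characteristic pairs to coincide modulo the coupled (single-shift) equivalence of \autoref{thm:class}, strictly finer than the independent-shift equivalence governing topological equivalence of the single fields. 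For generic $\alpha$ the shifted pair $(A^+_0+\alpha,A^-_0)$ is not coupled-equivalent to $(A^+_0,A^-_0)$, so the unfoldings of $v$ and $w$ are not sing-equivalent.

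\textbf{Main obstacle.} The crux of the argument is the precise identification of the coupled equivalence on characteristic pairs that classifies sing-equivalence, as opposed to the independent-shift equivalence that classifies topological equivalence of single fields. Once this gap is extracted from \autoref{thm:class}, the construction of the non-sing-equivalent example by a generic shift of one marked set reduces to a direct application of the realisation theorem.
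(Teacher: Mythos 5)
Your overall strategy — realize two topologically equivalent vector fields of class $PC$ with non-equivalent characteristic pairs via \autoref{thm:real1}, then invoke \autoref{thm:class} to conclude their generic unfoldings are not sing-equivalent — is the same as the paper's. However, the specific construction in Step~1 (replacing $A^+_0$ by $A^+_0+\alpha$ while fixing $A^-_0$) does \emph{not} change the equivalence class of the characteristic pair, so the argument collapses.

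The problem is the claim that the equivalence of \autoref{def:seteq} is strictly finer than ``independent shifts of $A^+$ and $A^-$''. It is not: the Remark preceding \autoref{def:seteq} observes that $\Lambda(A^++\alpha,A^-)=\Lambda(A^\pm)+\alpha$, and the definition explicitly allows a single additive shift of $\Lambda(B^\pm)$ precisely to absorb this. Since shifting $A^+$ by $\alpha$ and $A^-$ by $\beta$ shifts the difference set $\Lambda$ by $\alpha-\beta$, independent shifts of $A^+$ and $A^-$ combine into a \emph{single} shift of $\Lambda$, which is exactly what the single $\alpha$ in \autoref{def:seteq} eats. Hence $(A^+_0+\alpha,A^-_0)$ is equivalent to $(A^+_0,A^-_0)$ for \emph{every} $\alpha$, and by part~2 of \autoref{thm:class} the resulting unfoldings would in fact be sing-equivalent — the opposite of what you want.

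What actually changes the equivalence class is the cyclic order of the labelled differences $\tau_{km}$, which is a genuine invariant beyond the cyclic orders of $A^+$ and $A^-$ separately. To fix the construction you must alter the \emph{internal spacing} of one of the sets rather than translate it. For example, take $|A^+|=|A^-|=2$ with gaps $p=a_2^+-a_1^+$ and $q=a_2^--a_1^-$, both in $(0,1)$ with $p+q<1$. Up to an overall shift, $\Lambda=\{0,\,p,\,1-q,\,\{p-q\}\}$, and the cyclic order of the four labels $\tau_{11},\tau_{21},\tau_{12},\tau_{22}$ for $p<q$ (namely $\tau_{11},\tau_{21},\tau_{12},\tau_{22}$) differs from the one for $p>q$ (namely $\tau_{11},\tau_{22},\tau_{21},\tau_{12}$). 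Realizing both pairs via \autoref{thm:real1} with the same combinatorics outside the annulus gives orbitally topologically equivalent $PC$ fields whose characteristic pairs are not equivalent in the sense of \autoref{def:seteq:char}; part~1 of \autoref{thm:class} then yields the theorem. This is the construction the paper indicates by its Figure~\ref{fig:noneq}.
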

We prove this theorem in Sec. \ref{sec-support}. 

We conjecture that this is the only result of this kind: bifurcations in other generic one-parameter unfoldings are determined by the topology of the phase portrait of an unperturbed vector field. 

\begin{conj}  \label{thm:eq}   Consider two generic one parameter local families  of \vfs $\{v_{\e}\}$ and $\{w_{\delta}\}$ \st $v_0, w_0$ are not of class $PC$. Suppose that $v_0, w_0$ are orbitally topologically equivalent. Then these local families $\{v_{\e}\}$ and $\{w_{\delta}\}$ are weakly equivalent.
\end{conj}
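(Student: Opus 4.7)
The plan is to perform a case analysis on the codimension-one bifurcation stratum containing $v_0$. Since $\{v_\e\}$ is generic, $v_0$ is either structurally stable (in which case both families are locally trivial and the conclusion is immediate) or lies in a codimension-one stratum different from $PC$; on $S^2$ the remaining generic cases are: (a) $v_0$ has a saddle-node singular point, (b) $v_0$ has a hyperbolic saddle with a separatrix loop of nonzero saddle value, and (c) $v_0$ has a heteroclinic saddle connection between two distinct hyperbolic saddles. Transversality of $\{v_\e\}$ to the corresponding codimension-one Banach submanifold is part of the genericity hypothesis, and likewise for $\{w_\delta\}$.

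For cases (a) and (c) I would argue by combining standard codimension-one bifurcation theory near the degeneracy with a parametric Andronov--Pontryagin argument away from it. In each case the set of bifurcating orbits is confined to a neighborhood of a bifurcation support consisting of the degeneracy together with \emph{finitely} many separatrices. A local normal form near the support --- a center-manifold normal form at the saddle-node in (a), a normal form on a cross-section to the connection in (c) --- lets one build the fiberwise homeomorphism $H_\e$ in the family; away from the support $v_0$ is structurally stable, and the parametric version of Andronov--Pontryagin extends $H_\e$. The gluing data on the cross-sections is encoded by the cyclic order of separatrix intersections, a topological invariant of the phase portrait, so orbital topological equivalence of $v_0$ and $w_0$ produces weakly equivalent families.

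The substantive case is (b), where the bifurcation support may contain infinitely many separatrices winding towards the loop, in analogy with the sparkling saddle connections of Sec.~\ref{subsec: lbss}. Here I would invoke~\cite{IS}, which classifies such unfoldings up to weak equivalence by a single marked finite set on a coordinate circle, considered modulo a homeomorphism of that circle. What remains --- and this is the step I expect to be the main obstacle --- is to show that the combinatorial type of this marked set is an orbital-topological invariant of $v_0$. This should follow because a separatrix loop furnishes only \emph{one} cross-section, so the invariant reduces to the cyclic order of intersections of separatrices of other saddles with that cross-section, together with the pairing recording which two intersections share a saddle --- data that are plainly readable from the orbit topology of $v_0$. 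In contrast, a parabolic cycle produces two coordinate circles $S^1_\pm$ with an independent relative phase shift, and that relative phase is precisely the numerical modulus behind Theorem~\ref{thm:noneq}; the absence of any such modulus in case~(b) is exactly what makes the conjecture plausible.
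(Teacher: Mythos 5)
The paper does not prove \autoref{thm:eq}. It is explicitly stated as a \emph{conjecture}, immediately after \autoref{thm:noneq} exhibits the failure of the analogous statement for $v_0 \in PC$. So there is no ``paper's own proof'' against which to compare; the authors merely indicate that, modulo \cite{IS}, \cite{St}, and the present paper's treatment of $PC$, such a statement is plausible but unestablished. Your text should therefore be read as a research programme, not as a proof, and on that standard it is a sensible programme with the correct high-level architecture: stratify the codimension-one boundary, invoke the known weak-equivalence classifications in each stratum, and then ask whether the classifying invariant is determined by the orbit topology of $v_0$. Your closing observation---that the parabolic-cycle stratum is special because it produces \emph{two} coordinate circles with an independent relative phase, whereas a separatrix loop produces only one---is precisely the intended heuristic behind Theorems \ref{thm:class} and \ref{thm:noneq}, and it is the right reason to expect the conjecture to hold in the other strata.

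That said, as a proof the proposal has concrete gaps. First, your list of codimension-one degeneracies is incomplete: Sotomayor's stratification also contains vector fields with a weak focus (Andronov--Hopf), which is not structurally stable and is not $PC$, a saddle-node, a loop, or a heteroclinic connection. Second, and more substantively, the claim for case (a) that ``the set of bifurcating orbits is confined to a neighborhood of a bifurcation support consisting of the degeneracy together with \emph{finitely} many separatrices'' is not justified and is not true in general. A generic saddle-node singular point whose central unstable separatrix re-enters the parabolic sector (this is an open, hence codimension-one, configuration) produces, upon unfolding, a birth of a cycle and accumulating sparkling saddle connections entirely analogous to those in Sec.~\ref{subsec: lbss}; this is in fact the phenomenon treated in \cite{St}, not a finite matter that a parametric Andronov--Pontryagin argument disposes of. You would need to split case (a) into sub-cases and reuse the \cite{St} classification in the ``winding'' sub-case, at which point the same question as in case (b) reappears: is the classifying data an invariant of the orbit topology of $v_0$ alone? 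Third, even in case (b) your argument is an assertion rather than a proof: you correctly note that the invariant of \cite{IS} is a finite set on one circle considered up to homeomorphism, and that orbit topology determines the cyclic order and pairing, but one must actually verify (as \autoref{thm:class} part 2 does for $PC$, via Lemmas \ref{lem:order}, \ref{lem-H-mid}, and \ref{lem:flex}) that the resulting identification of bifurcation values extends to a genuine weak equivalence of families, not merely a bijection of combinatorial data. That verification is the content of the conjecture and cannot be waved away as ``plainly readable from the orbit topology.''
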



\subsection{Classification of $\pc$ families}   \label{subsec:class}

To any pair of finite sets on coordinate circles ordered counterclockwise and enumerated:
$$
A^+ = \{a_1^+,...,a_K^+ \}, \  A^- = \{a_1^-,...,a_M^- \}
$$
a set of pairwise differences corresponds:
\begin{equation}\label{eqn:diff}
 \tau_{km}:= \{a_k^+ - a^-_m\}, \quad  \Lambda (A^{\pm}) := \{\tau_{km} \mid k = 1, ... , K; m = 1,...,M\},
\end{equation}
where $\{a_k^+ - a^-_m\} \in [0,1)$ stands for the fractional part of $a_k^+ - a^-_m$; that is, $\tau_{km}$ is the length of the positively oriented arc from $a^-_m$ to $a_k^+$.
A pair $A^{\pm}$ is non-synchronized iff all the elements of the set $\Lambda (A^{\pm})$ are pairwise distinct.

\begin{rem}
 If one of two sets $A^-, A^+$ is empty, the set $\Lambda (A^{\pm})$ is empty.
\end{rem}
\begin{rem}
 If we add a shift to one of the sets $A^\pm$, then the shift is added to the set of differences $\Lambda(A^\pm)$: $\Lambda(A^{\pm})+\alpha=\Lambda(A^++\alpha, A^-)$. This is the reason for considering $\Lambda(B^\pm)+\alpha$ in the following definition.
\end{rem}

\begin{defin}  \label{def:seteq}   Two non-synchronized pairs of marked finite sets $A^{\pm}$ and $B^{\pm}$ on two circles are \emph{equivalent} if $|A^-|=|B^-|$, $|A^+|=|B^+|$, and for some shift $x\mapsto x+\alpha$, the sets
$ \Lambda (A^{\pm})$ and  $\{ \Lambda (B^{\pm}) +\alpha \}$ are ordered in the same way on $[0,1)$.
In more detail, let
$$
B^+ = \{b_1^+,...,b_K^+ \}, \  B^- = \{b_1^-,...,b_M^- \}
$$
be ordered counterclockwise, and put
$$
 \lambda_{km}:= \{b_k^+ - b^-_m\}, \quad  \Lambda (B^{\pm}) = \{ \lambda_{km} \mid  k = 1, \dots , K; m = 1,\dots,M\}.
$$
Then there exists $\alpha\in \bbR$ such that
\begin{equation}\label{eqn:order5}
  \tau_{km} > \tau_{k'm'} \Rightarrow \{\lambda_{km}+\alpha\} > \{\lambda_{k'm'}+\alpha\}.
\end{equation}

\end{defin}
We will use this definition for characteristic pairs of sets, see Definition \ref{def:charset} of Sec. \ref{sub:marfin}. Recall that the characteristic sets $A^{\pm}(v)$ for a vector field $v$ are well-defined up to  additive constants, so the equivalence of characteristic sets for two vector fields is well-defined.

\begin{defin}
\label{def:seteq:char}
 Let two vector fields $v,w$ of class $PC$  be orbitally topologically equivalent. Enumerate the sets $A^{\pm}(v)$ counterclockwise along coordinate circles. This enumeration and the orbital topological equivalence of $v,w$ induce the enumeration of the sets $\tilde A^{\pm}(w)$: the intersections of transversals with corresponding separatrices of $v,w$ will have the same numbers.

 Now, the vector fields $v$ and $w$ are said to have  \emph{non-synchronized and equivalent characteristic sets} if $A^{\pm}(v)$ and $A^{\pm}(w)$ are non-synchronized and equivalent in the sense of Definition \ref{def:seteq} (with the numbering described above).
\end{defin}

\begin{thm}  \label{thm:class}  1. Let two $\pc$ families be sing-equivalent. Then their characteristic sets on the coordinate circles are equivalent in the sense of Definition \ref{def:seteq:char}.

2. Let two \vfs of class $PC$ be orbitally topologically equivalent. Let their characteristic pairs be non-synchronized and equivalent in the sense of Definition \ref{def:seteq:char}. Then the generic unfoldings of these two \vfs are sing-equivalent.
\end{thm}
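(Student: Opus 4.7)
The plan is to reduce both parts of the theorem to the combinatorics of the parameters at which sparkling saddle connections occur. Using the generator $u$ from Theorem~\ref{thm:pmap}, extended to a smooth family of generators $u_\e$, the Poincaré-like passage map $P_\e\colon \Gamma^-\to\Gamma^+$ on the side of $\e$ where $\g$ vanishes is, up to smooth conjugacy, the time-one map of $\dot x = u(x)+\e$. Hence a trajectory entering $\Gamma^-$ at $T^-$-coordinate $t^-$ exits $\Gamma^+$ at $T^+$-coordinate $t^- + T(\e) + O(1)$, where $T(\e)\to +\infty$ as $\e\to 0$. Combined with the translation rule $T^\pm(Px)=T^\pm(x)+1$, this shows that a saddle connection between the saddles marked by $a^-_m$ and $a^+_k$ exists exactly when $T(\e)\equiv\tau_{km}\pmod 1$, i.e.\ at a sequence of parameters $\e_{km,n}\to 0$. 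For large $n$, the cyclic order of the union $\{\e_{km,n}\}$ along the $\e$-axis reproduces the cyclic order of $\Lambda(A^\pm)$ on $[0,1)$ up to a single shift determined by the asymptotic value of $T(\e)\bmod 1$.

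Part 1 is then immediate. If $(h,H_\e)$ is a sing-equivalence between $V=\{v_\e\}$ and $W=\{w_\d\}$, then by \eqref{eqn:sing1} the map $H_\e$ sends the hyperbolic saddles $E_m(\e),I_k(\e)$ together with their separatrices to the corresponding objects of $w_{h(\e)}$. Consequently, $h$ bijectively identifies the sparkling-connection parameters of $V$ with those of $W$ under the canonical labelling by pairs $(k,m)$. Because $h$ is a homeomorphism of $(\rr,0)$, it is monotone and thus preserves the order of these parameters in a one-sided neighborhood of $0$. By the first paragraph, this forces the cyclic orderings of $\Lambda(A^\pm)$ and $\Lambda(B^\pm)$ to coincide after a single shift $\alpha$ accounting for the discrepancy of asymptotic phases $T_v(\e)-T_w(h(\e))\bmod 1$, which is exactly \eqref{eqn:order5}.

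For Part 2, we construct the sing-equivalence explicitly. Fix a small neighborhood $U$ of the large bifurcation support of $v$, bounded by a cross-section close to $\Gamma$ and by local transversals at the relevant hyperbolic saddles; pick an analogous neighborhood $\tilde U$ for $w$. Outside $U$ and $\tilde U$ the vector fields $v_\e,w_\d$ are Morse--Smale with uniformly bounded structure, so the orbital topological equivalence of $v_0$ and $w_0$ extends, by a Peixoto-type argument, to a continuous family of homeomorphisms $H_\e^{\mathrm{ext}}\colon S^2\setminus U \to S^2\setminus \tilde U$ realizing $v_\e\sim w_{h(\e)}$ for any continuous choice of $h$. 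Define $h$ on the vanishing side by the condition $T_v(\e)\equiv T_w(h(\e))+\alpha \pmod 1$, with $\alpha$ supplied by Definition~\ref{def:seteq:char}; non-synchronization of both characteristic pairs makes this $h$ a well-defined homeomorphism of $(\rr,0)$, and the equivalence hypothesis guarantees it sends each $\e_{km,n}(V)$ to $\d_{km,n}(W)$. Inside $U$, define $H_\e$ on the cross-section as a translation in generator coordinates matching $T_v$ with $T_w\circ h$, and extend it along the trajectories of $u_\e$ and $\tilde u_{h(\e)}$ and along the separatrices entering $U$.

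The main obstacle is the glueing of the interior and exterior pieces on $\partial U$, together with the verification that the resulting $H_\e$ is a phase-portrait homeomorphism at every $\e$ --- in particular that separatrices interact correctly between consecutive sparkling parameters and not only at the sparkling parameters themselves. Non-synchronization of both characteristic pairs is indispensable here: it guarantees that within each family the sparkling parameters are pairwise distinct, so that the map $h$ defined above is genuinely injective. Finally, we stress that $H$ is never asked to be continuous on the polycycle at $\e=0$; this is precisely the Malta--Palis obstruction \cite{MP} to strong equivalence that the present definition of sing-equivalence deliberately bypasses.
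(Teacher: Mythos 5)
Your proposal follows the same overall strategy as the paper: reduce everything to the combinatorics of the sparkling-connection parameters $\e_{kmn}$, match these across the two families by an order-preserving $h$, and then build the spatial homeomorphisms by decomposing the sphere into an annular piece around $\g$ and the two complementary discs. Part 1 is essentially right in outline, but you assert that \eqref{eqn:sing1} already gives that $H_\e$ takes each separatrix $l^\pm_j(\e)$ to the corresponding separatrix $\tilde l^\pm_j(h(\e))$. That remark only controls singular points and hyperbolic limit cycles; a priori $H_\e$ could permute the two unstable separatrices of a given saddle, and then your labelled bijection of $\{\e_{kmn}\}$ with $\{\delta_{kmn}\}$ would fail to respect $(k,m)$. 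The paper devotes Lemma~\ref{lem:sing-sep} to ruling this out, using orientation and the fact that the $\alpha$-/$\omega$-limit sets of the other separatrices distinguish the two cases. This is a real step, not a formality.

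In Part 2 you identify ``the main obstacle'' as the gluing of the interior and exterior homeomorphisms on $\partial U$, and then leave it unresolved. That is exactly where the heart of the paper's argument lies. The conjugacies $H^\pm_\e$ of $v_\e|_{D^\pm}$ with $w_{h(\e)}|_{\tilde D^\pm}$ (coming from structural stability on the discs) and the annular conjugacy $H^{\mathrm{mid}}_\e$ are constructed independently and have no reason to agree on $C^\pm$; you need a device that modifies $H^\pm_\e$ by an automorphism of $v_\e|_{D^\pm}$ that realizes an arbitrary prescribed boundary homeomorphism fixing the separatrix intersections. The paper supplies this as Lemma~\ref{lem:flex} (flexibility of automorphisms), proved via the canonical-region structure of Morse--Smale fields (Propositions~\ref{prop:extension} and \ref{prop:extension-2}); without some such lemma the gluing is not merely tedious but unjustified. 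Also note that you choose $U$ to be a neighborhood of the whole large bifurcation support, whereas the paper takes $U$ to be just the annulus between $C^-$ and $C^+$: the latter is what makes the complement a disc on which the standard structural-stability statement applies cleanly. Finally, you do not check that the constructed map is a conjugacy for \emph{intermediate} $\e$, between consecutive $\e_{kmn}$; the paper's Lemma~\ref{lem:order} proves the needed order statement on the transversal loops, which is what underlies Lemma~\ref{lem-H-mid}. Acknowledging these obstacles is good, but as written your proof does not close them.
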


Any pair of non-synchronized marked finite sets determines the bifurcation scenario (sequence of bifurcations) in the corresponding class of $\pc$ families. This scenario will be explicitly described, see Sec. \ref{sub:scen}.

\section{Bifurcations in the $\pc$ families}

\subsection{Embedding theorem for families}    \label{subsec:emb}   Takens embedding Theorem \ref{thm:pmap} for parabolic germs may be extended to their unfoldings.

\begin{thm}   \label{thm:IYa}  \cite{IYa} Let $P_\e$ be a generic one-parameter $C^\infty $ unfolding of a parabolic germ
$$
P_\e (x) = x + x^2 + \e + \dots .
$$
Then in the domain $\{ \e \ge 0\} \setminus \{ 0,0\} $, the family $P_\e $ is $C^\infty $ equivalent to the time one
phase flow transformation of the field
\begin{equation}\label{eqn:gen}
u_\e (x) = \frac {x^2+\e }{1+a(\e )x},
\end{equation}
where $a(\e )$ is a $C^\infty $ function; the equivalence is infinitely smooth both in $x$ and $\e $.
\end{thm}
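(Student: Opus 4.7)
The plan is first to establish a unique formal normal form for $P_\e$ in the class of time-one maps of vector fields of the form $u_\e$, and then to realize this formal equivalence as an actual $C^\infty$ conjugacy on the wedge $\{\e \ge 0\} \setminus \{(0,0)\}$. For the formal step, I would write $P_\e$ formally as the time-one flow of a series $U_\e(x) = x^2 + \e + \sum_{k\ge 1} c_k(\e)\, x^{k+1}$ with coefficients $C^\infty$ in $\e$, using Borel's lemma and the standard parametric inversion of the Lie exponential. A formal change of variable $x \mapsto x + h(x,\e)$ with $h = O(x^2)$ transforms $U_\e$; solving the cohomological equation order by order in $x$ kills every coefficient of $U_\e$ except one, and matching against the expansion
$$
u_\e(x) = (x^2+\e) - a(\e)\, x\, (x^2 + \e) + a(\e)^2 x^2 (x^2+\e) - \cdots
$$
identifies $a(\e)$ as the unique formal invariant, which is automatically $C^\infty$ in $\e$. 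This produces a formal series $\hat H_\e(x) = x + O(x^2)$ conjugating $P_\e$ to $g^1_{u_\e}$.

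Next I would realize the formal conjugacy as a genuine map. For $\e > 0$, neither $P_\e$ nor $g^1_{u_\e}$ has a real fixed point near $0$, so both admit Fatou coordinates on $\rr$. The rational field $u_\e$ has the explicit Fatou coordinate
$$
\Psi^u_\e(x) = \int_0^x \frac{dt}{u_\e(t)},
$$
which conjugates $g^1_{u_\e}$ to $z \mapsto z + 1$. The corresponding coordinate for $P_\e$ is defined by the normalized limit
$$
\Psi^P_\e(x) := \lim_{n \to \infty} \bigl[\Psi^u_\e(P_\e^n(x)) - n\bigr],
$$
whose convergence follows from the fact that $\hat H_\e$ agrees with the identity to leading order. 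I would then set $H_\e := (\Psi^u_\e)^{-1} \circ \Psi^P_\e$, which conjugates $P_\e$ to $g^1_{u_\e}$ on the real line for $\e > 0$. For the boundary trace $\e = 0$, $x \ne 0$, Takens' Theorem~\ref{thm:pmap} supplies a smooth generator $w_0$ of $P_0$, which a smooth change of variable in $x$ alone conjugates to $u_0 = x^2/(1 + a(0)x)$; the function $a(\e)$ and its $0$-jet in $\e$ are thus pinned down on both sides of the corner.

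The hard part will be proving joint $C^\infty$ smoothness of $H_\e$ as $(x,\e) \to (0, 0^+)$: both Fatou coordinates blow up on the scale $1/\sqrt{\e}$ near $x = 0$, and their difference has to extend smoothly down to $\e = 0$. I would complexify $x$ and run a parametric Ecalle-Voronin analysis: for $\e = 0$ the classical attracting/repelling petal decomposition applies; for real $\e > 0$ I would construct analytic sectorial charts in the $x$-plane avoiding the complex fixed points $\pm i \sqrt{\e}$, chosen so as to limit continuously onto the petals as $\e \to 0^+$. The cocycle comparing the two sectorial normalizations — the parametric horn map — extends smoothly down to $\e = 0$ and vanishes identically on the real axis for $\e > 0$, because the real dynamics is actually embedded in the real flow of $u_\e$. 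A parametric Ramis-Sibuya or quasi-analyticity argument then patches the sectorial charts into a single $C^\infty$ conjugacy on $\{\e \ge 0\} \setminus \{(0,0)\}$, with $a(\e)$ emerging as the unique $C^\infty$ asymptotic invariant of the family.
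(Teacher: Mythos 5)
The paper itself offers no proof of Theorem~\ref{thm:IYa}; it is quoted from \cite{IYa}, so there is nothing internal to compare against. Judged on its own, your sketch has two real gaps. The first is in the realization step for $\e>0$. You propose $\Psi^P_\e(x)=\lim_{n\to\infty}\bigl[\Psi^u_\e(P_\e^n(x))-n\bigr]$, which is the classical recipe for a Fatou coordinate attached to an attracting parabolic petal, i.e.\ it presupposes that the forward orbit converges to the fixed point. For $\e>0$ there is no real fixed point near $0$: one has $P_\e(x)-x=x^2+\e+\dots>0$, so $P_\e^n(x)$ moves strictly to the right and exits the domain of the germs $P_\e$ and $\Psi^u_\e$ after finitely many steps; the limit is simply undefined. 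For fixed $\e>0$ a conjugacy of fixed-point-free interval germs exists trivially and is highly non-unique; the entire content of the theorem is to produce a choice that is $C^\infty$ in $\e$ down to $\e=0$, and this formula does not do that.

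The second gap is a mismatch of categories. Steps two and three rest on complexifying $x$ and running a parametric Ecalle--Voronin and Ramis--Sibuya analysis, but that machinery lives in the analytic world. The theorem concerns $C^\infty$ unfoldings, which generically have no holomorphic extension in $x$, so petals, sectorial charts, and horn maps are not available in the form you invoke. (Your step one is consistent with the smooth category --- you appeal to Borel's lemma --- but the later steps are not.) There is also a circularity: you claim the horn-map cocycle vanishes on $\rr$ for $\e>0$ ``because the real dynamics is actually embedded in the real flow of $u_\e$,'' yet that embedding is exactly what the theorem asserts and what must be proved. A $C^\infty$ proof, such as the one in \cite{IYa}, must instead work with asymptotic expansions in $(x,\e)$, flat remainders, and a smooth functional-cochain formalism that replaces, rather than imports, the holomorphic sectorial apparatus.
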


The coordinate $x_\e$ that brings $P_\e$ to the time-one shift of the \vf $u_\e$ is called \emph{normalizing.}
From now on, the coordinate on the cross-section $\G$ for $\e \ge 0$ is the normalizing coordinate $x_\e$.

\begin{rem}\label{rem:bound}
Since the normalizing coordinate $x_\e$ is $C^{\infty}$ smooth on the set
$\{ \e = 0\} \setminus \{ 0,0\} $, it may be smoothly extended to some \nbd of any point of this set. As a corollary, all the derivatives of $x_\e$ at $\e = 0, x_\e \not = 0 $ exist and are finite.
\end{rem}

\subsection{Transversal loops and canonical coordinates on them}   \label{subsec:non}

Consider a \vf $v$ of class $PC$; let $\g $ be its parabolic cycle, $\G $ be a cross-section to $\g , \ O = \G \cap \g $, and
$$
P: (\G ,O) \to (\G ,O)
$$
be the germ of the Poincaré map corresponding to $\g$. Consider a generic unfolding $V = \{ v_\e \} $ of $v, \ v_0 = v$. Let $P_\e $ be the
corresponding \pmap of $v_\e $, and $x_\e$ the corresponding normalizing chart on $\G$ provided by Theorem \ref{thm:IYa}.
Let $C^+$ and $C^-$ be two \nccs around $\gamma$, $C^\pm \cap \G    = b^\pm $. For $\e>0$, the cycle $\gamma$ vanishes, and the Poincaré map $\Delta_{\eps} \colon C^-\to C^+$ is well-defined. We will now choose coordinates $\ph^{\pm}_\e$ on $C^{\pm}$ such that $\Delta_{\eps}$ becomes a rotation in these coordinates.

For $\e > 0$ consider a one form $\om_\e $ on $\G $ dual to the \vf $u_\e $:
$$
\om_\e = \frac {dx_\e}{u_\e(x_\e )}.
$$
For small $b \in \G $ and $\e > 0$ let
\begin{equation}\label{eqn:te}
T^\pm_\e (b) = \int^{x_\e (b)}_
{x_\e (b^\pm)} \om_\e .
\end{equation}
Let
$$
\tau (\e ) = T^-_\e (b^+).
$$
This function may be explicitly calculated:
\begin{equation}
\label{eq:tau}
\tau (\e ) = \frac {1}{\sqrt \e }(\arctan \frac {x_\e (b^+)}{\sqrt \e } - \arctan \frac {x_\e (b^-)}{\sqrt \e }) +
\frac {a(\e )}{2}\log \frac {x^2_\e (b^+)+\e }{x^2_\e (b^-)+\e }.
\end{equation}
Note that for $\e > 0$,
$$
T^-_\e (b) = T^+_\e (b) + \tau (\e ).
$$
Note that  formula~\eqref{eqn:te} works for $\e = 0$ also, with the following restriction:
$$
T^-_0 = T^- \mbox{ on } \G^- \setminus \{ 0\} ;
$$

$$
T^+_0 = \lim_{\e \to 0} T^-_\e  - \tau (\e ).
$$
The time functions $T^\pm_\e $ induce coordinates $\ph^\pm_\e $ on $C^\pm $ in the following way. Consider first $C^-$.
Take a point $a \in C^-$ and emerge a forward orbit of $v_\e $ from it, see Figure \ref{fig:coord}. Let $b \in \G^-$ be its first intersection point with $\G $. Take
$$
\ph^-_\e (a) = T^-_\e (b).
$$
Note that $\ph^-_\e (b^-) = 0$; as $a$ tends to $b^{-}$, one of the one-sided limits of $\ph^-_\e$ at $b^-$ is $0$ and the other is $1$. Thus $\ph^-_\e $ maps $C^-$ onto the coordinate circle $S^1_-$. The same construction provides a function
$$
\ph^+_\e: C^+ \to S^1_+ , \ a' \mapsto T^+_\e (b'),
$$
see Figure \ref{fig:coord}.
These $\e $-dependent coordinates $\ph^\pm _\e $ on the transversal loops $C^\pm $ are called \emph{canonical}.

Without loss of generality we may assume that the cycle $\g $ is time oriented clockwise. Then the transversal loops $C^\pm $ are oriented counterclockwise by the canonical coordinates.

\subsection{ The \pmap of the \nccs }

Consider a small $\e > 0$. The orbit of the \vf $v_\e$ that starts at a point $a \in C^-$ eventually reaches $C^+$ at a unique point $a'$. This defines the \pmap
$$
\Delta_\e : C^- \to C^+, \ a \to a'
$$
along the orbits of $v_\e $, see Figure \ref{fig:coord} again.

\begin{figure}
    \centering
\includegraphics[scale=0.3]{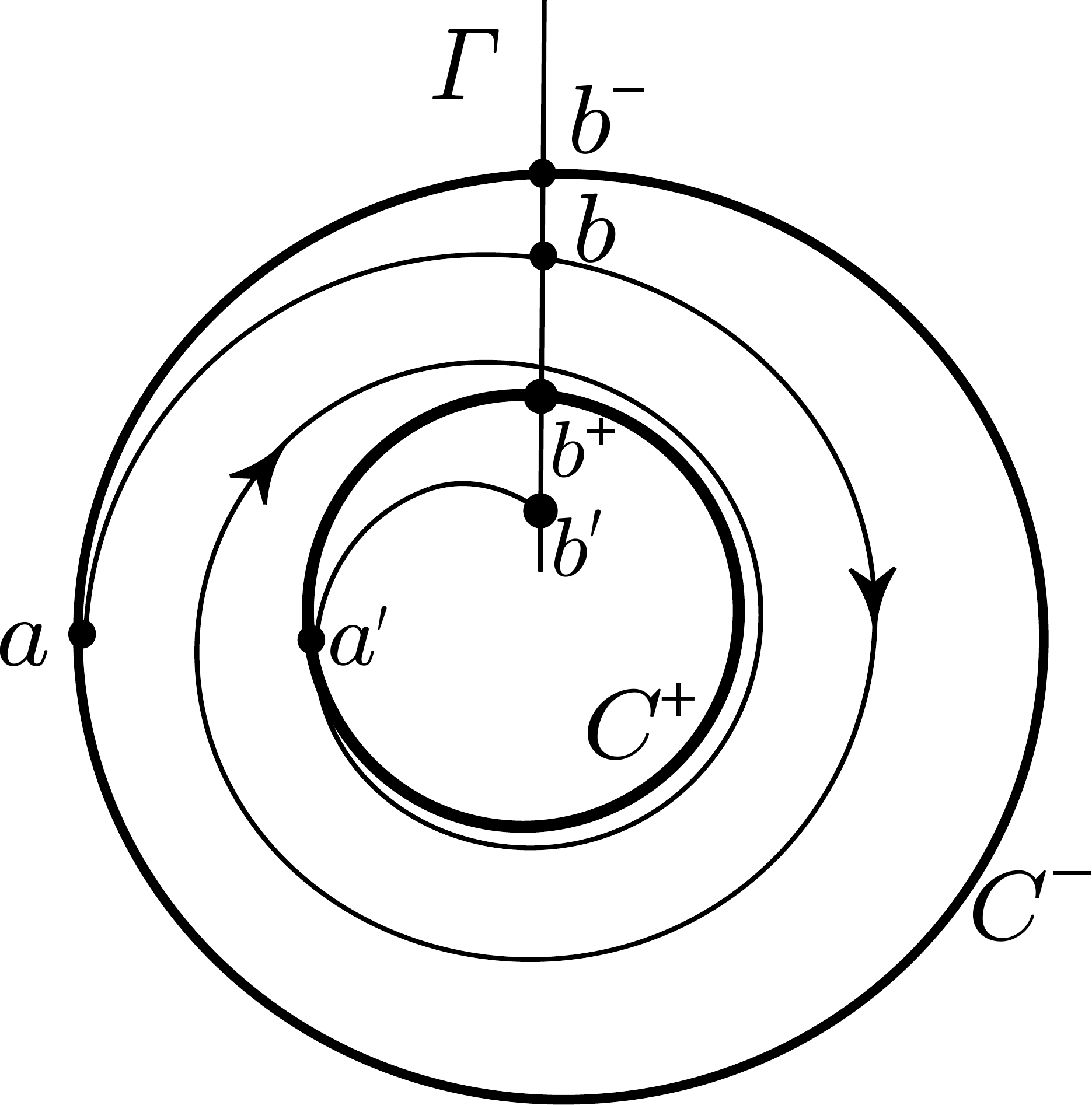}
\caption{Canonical coordinates and the Poincaré map on the transversal loops }\label{fig:coord}
\end{figure}

\begin{prop} \label{prop:pmap}
In the coordinates $\ph_\e^{\pm}$, the \pmap $\Delta_\e : C^- \to C^+$ is a mere rotation:
\be \label{eqn:rot}
\ph_\e^+ (\Delta_\e (a)) = \ph_\e^- (a) - \tau (\e )\pmod\zz.
\ee
\end{prop}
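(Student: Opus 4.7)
The plan is to pick a single forward orbit of $v_\e$ from a point $a\in C^-$, follow it all the way to $a'=\Delta_\e(a)\in C^+$, and express both canonical coordinates in terms of the time function along this orbit; the shift by $\tau(\e)$ will appear automatically from the known discrepancy between $T^-_\e$ and $T^+_\e$.

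First, I identify the discrete intersection points of this orbit with $\G$. By the construction of $\ph^-_\e$, the orbit meets $\G$ for the first time at some $b\in\G^-$, so $\ph^-_\e(a)=T^-_\e(b)$. It then continues through a finite sequence of Poincaré iterates $P_\e(b),P_\e^2(b),\dots,P_\e^N(b)=b'$, where the last iterate $b'$ lies in $\G^+$ and is exactly the point associated with $a'$ by the symmetric construction on $C^+$ (the first intersection of the backward orbit from $a'$ with $\G$). Hence $\ph^+_\e(a')=T^+_\e(b')$. Next, I invoke Theorem \ref{thm:IYa}: in the normalizing chart $x_\e$, the map $P_\e$ is the time-one shift of the field $u_\e$. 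Because $\om_\e=dx_\e/u_\e(x_\e)$ is dual to $u_\e$, both functions $T^\pm_\e$, defined as primitives of $\om_\e$, satisfy the functional equation $T^\pm_\e(P_\e(x))=T^\pm_\e(x)+1$. Iterating $N$ times gives $T^\pm_\e(b')=T^\pm_\e(b)+N$.

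Finally, I combine this with the relation between the two time functions. Since $T^-_\e$ and $T^+_\e$ are antiderivatives of the same form $\om_\e$, their difference is constant on each connected component of the domain; evaluating at $b^+$ and using $T^+_\e(b^+)=0$ together with $\tau(\e)=T^-_\e(b^+)$ yields $T^-_\e(x)-T^+_\e(x)\equiv\tau(\e)$. Therefore
\[
T^+_\e(b')=T^-_\e(b')-\tau(\e)=T^-_\e(b)+N-\tau(\e),
\]
and reducing modulo $\zz$ gives exactly \eqref{eqn:rot}.

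The only delicate point is to justify that the identity $T^-_\e-T^+_\e=\tau(\e)$ remains valid along the whole orbit arc from $b$ to $b'$, which crosses the locus $x_\e=0$ separating $\G^-$ from $\G^+$. For $\e>0$, however, $u_\e(x)=(x^2+\e)/(1+a(\e)x)$ has no zero near $x=0$, so $\om_\e$ is smooth across $x_\e=0$ and the identity holds without any correction; the rest of the argument is just bookkeeping on the integer $N$, which vanishes modulo $\zz$.
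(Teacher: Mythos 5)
Your proof follows the same route as the paper: take a point $a\in C^-$, track the orbit to $a'=\Delta_\e(a)$, read off $\ph^-_\e(a)=T^-_\e(b)$ and $\ph^+_\e(a')=T^+_\e(b')$ with $b'=P_\e^n(b)$, use that $P_\e$ is a unit shift in the $T$-charts, and invoke $T^-_\e=T^+_\e+\tau(\e)$. The only (harmless) divergence from the paper is your choice of $b'$ as the last intersection with $\G$ before $a'$ rather than the first one after it — these differ by one iterate of $P_\e$, hence by $1$ in the $T$-chart, which is absorbed mod $\zz$; your closing remark that $\om_\e$ has no singularity across $x_\e=0$ for $\e>0$, so the constant $\tau(\e)$ is the same on both sides, is a sound observation that the paper leaves implicit.
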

In what follows, the map $\Delta_{\e}$ in $\ph_\e^{\pm}$-coordinates (i.e. the rotation by $-\tau(\e)$) will be denoted by the same symbol.

\begin{proof}
Let $a \in C^-$, and $b$ be the same as above, that is, the first intersection point with $\G $ of the
forward orbit of $v_\e $ emerging from $a$. Let $a' \in C^+$ be the image of $a$: $a' = \Delta_\e (a)$, and $b'$ be the first intersection point with $\G $ of the forward orbit of $v_\e $ that emerges from $a'$. Then, by definition of canonical coordinates,
$$
\ph^-_\e(a) = T^-_\e (b), \ \ph^+_\e(a') = T^+_\e (b').
$$
On the other hand,
$$
b' = P^n_\e (b)
$$
for some $n$. Hence,
$$
T^-_\e (b') - T^-_\e (b) = n,
$$
because in the chart $T_\e $ the \pmap $P_\e $ is a mere shift by $1$. Moreover,
$$
T^+_\e (b') = T^-_\e (b') - \tau (\e ).
$$
Recall that $a' = \Delta_\e (a)$. Hence,
$$
\ph^+_{\e}(\Delta_\e (a)) = \ph^+_\e(a') = T^+_\e (b') = T^-_\e (b') - \tau (\e )  = T^-_\e (b) - \tau (\e ) \pmod\zz = \ph^-_\e (a) - \tau (\e )\pmod\zz,
$$
see Figure \ref{fig:coord}.
This proves the proposition.
\end{proof}

\subsection{Characteristic sets on the transversal loops}  \label{subsec:mark}
Let $S^-=\{s_m^-\}$ be the set of all intersections of $C^-$ with separatrices of $v$, enumerated counterclockwise along $C^-$.
Let $l_{m}^-$ be the corresponding separatrices and $E_m$ be the corresponding saddles of $v$.
The canonical coordinate $\ph^{-}_0$ maps the set $S^-$ to the characteristic set
$$
A^- = \{a^-_1, ... , a^-_M \}, \ a^-_m = \ph^{-}_0 (s^-_m),
$$
see Figure \ref{fig:charset}.

If two points $a_m^-=\ph^{-}_0(s_m^-)$ and $a_{m'}^-=\ph^-_0(s_{m'}^-)$ are equivalent as the points of the marked set $A^-$, then $E_m = E_{m'}$.

Similarly, separatrices $l_k^+$ of saddles $I_k$ are all separatrices of $v$ that intersect $C^+$, $S^+ = \{s_{k}^+\}$ are intersection points, and $a_k^+ = \ph_0^+(s_k^+)$.
This determines another characteristic set
$$
A^+ = \{a^+_1, ... , a^+_K \}, \ a^+_k = \ph^{-}_0 (s^+_k).
$$



\begin{figure}
 \begin{center}
\includegraphics[width=0.55\textwidth]{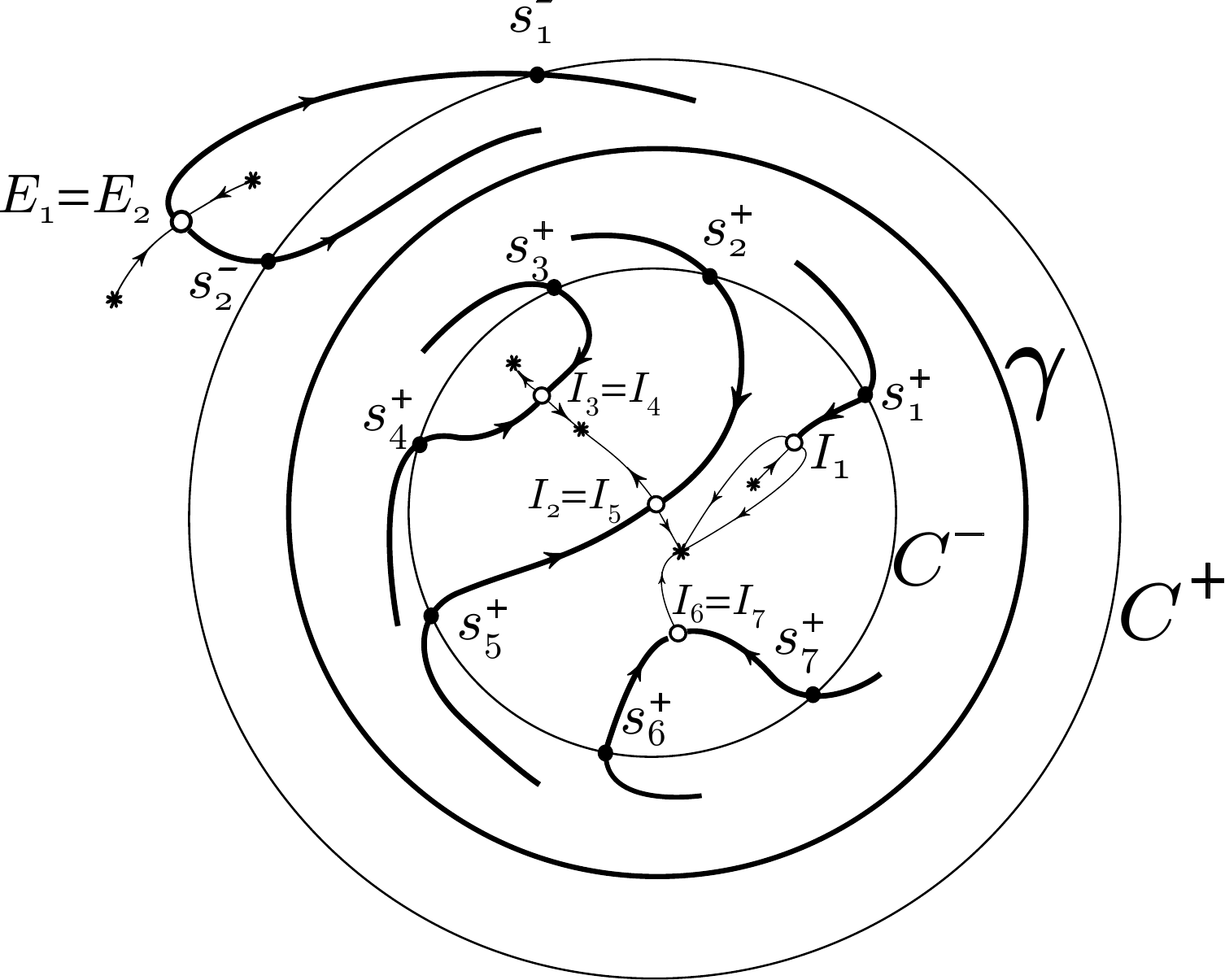}
 \end{center}
 \caption{Characteristic sets on the transversal loops for the $PC$ vector field}\label{fig:charset}
\end{figure}

\subsection{The connection equation}   \label{subsec: conn}

In this and the next sections we describe the bifurcations in $\pc$ families.

Let $V = \{v_\e\}$ be a $\pc$-family.
The \vfs $v_\e$ have saddles $E_m(\e), \ I_k (\e )$ continuously depending on $\e, \ E_m(0) = E_m, \ I_k(0) = I_k$.
The germs of their separatrices at these saddles depend continuously on $\e $.
Denote by $l_m^-(\e )$ the separatrix with a germ $(l_m^-(\e ), E_m(\e ))$ that is
continuous in $\e $ and coincides with $(l_m^-, E_m)$ for $\e = 0$. In the same way the separatrices $l_k^+(\e )$ of the saddles $I_k(\e )$ are defined. Let $s_m^-(\e )$ be the (unique for $\e $ small) intersection of
$l^-_m(\e )$ and $C^-$; let $s_k^{+}(\e)$ be the intersection of $l_k^+(\e )$ and~$C^+$. Define $S^\pm(\e) = \{s_{i}^\pm(\e)\}$.  Put
$$
a_m^-(\e) := \ph_\e^-(s_m^-), \quad a_k^+(\e) := \ph_\e^+(s_k^+),
$$
and let $A^-(\e ) := \{a_m^-(\e)\}$, $A^+(\e):=\{a_k^+(\e)\}$. The sets $A^{\pm}(\e )$ depend continuously on $\e $ and coincide with $A^\pm $ for $\e = 0$.
In particular, if $A^\pm $ is a pair of \ns sets, then $A^\pm (\e )$ also is for $\e $ small.

Let
\be  \label{eqn:tkm}
\tau_{km}(\e ) = \{a^+_k(\e ) -  a^-_m(\e )\}; \ \tau_{km}(\e ) \in [0,1).
\ee
If none of $\tau_{km}$ is equal to zero, $\tau_{km}(\e)$ depends continuously on $\e$ and coincides with $\tau_{km}$ for $\e=0$.
Without loss a generality, we may and will assume that none of $\tau_{km}$ is $0$. Elsewhere, we will slightly change $b^-$ keeping $b^+$ unchanged. This will rotate $A^-$ and preserve $A^+$, thus change $\tau_{km}$ by an additive constant.

As the pair $A^\pm (\e )$ is non-synchronized, the values of $\tau_{km}(\e )$ are pairwise distinct.

A saddle connection between the saddles $E_m(\e )$ and $I_k(\e )$ occurs iff
$$
a_k^+(\e ) = \Delta_\e (a_m^-(\e )).
$$
By Proposition~\ref{prop:pmap}, this is equivalent to
$$
a_k^+(\e ) = a_m^-(\e ) - \tau (\e ) \pmod\zz
$$
or equivalently,
$$
\tau_{km} (\e ) = -\tau (\e ) \pmod\zz.
$$
Another form of this equation is:
\begin{equation}\label{eqn:conn}
\tau_{km}(\e ) = -\tau (\e ) + n, \ n \in \nn .
\end{equation}
This is a \emph{connection equation}.

\begin{prop}   \label{prop:conn} For $\e $ small and $n$ large enough, equation~\eqref{eqn:conn} has a unique solution $\e = \e_{kmn}$.
\end{prop}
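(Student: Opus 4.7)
The plan is to rewrite the connection equation~\eqref{eqn:conn} in the form $F(\e)=n$, where
$$
F(\e) := \tau(\e)+\tau_{km}(\e),
$$
and to show that on some interval $(0,\e_0]$, with $\e_0$ independent of $k,m,n$, the function $F$ is a continuous strictly decreasing bijection onto $[F(\e_0),+\infty)$. The intermediate value theorem then produces a unique $\e_{kmn}$ for every sufficiently large integer $n$.

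The first step is to read off the asymptotics of $\tau(\e)$ from formula~\eqref{eq:tau}. Since $x_\e(b^+)>0>x_\e(b^-)$ stay bounded away from $0$ as $\e\to 0^+$, the arctangent difference tends to $\pi$ and the logarithmic piece stays bounded, so $\tau(\e)=\pi/\sqrt\e+O(1)$. Differentiating term by term, using Remark~\ref{rem:bound} to control the $\e$-derivatives of $x_\e(b^\pm)$, the dominant contribution comes from $d/d\e\,(\pi/\sqrt\e)$ and gives
$$
\tau'(\e) = -\frac{\pi}{2\,\e^{3/2}}+O(\e^{-1/2})\to-\infty \qquad\text{as } \e\to 0^+.
$$

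The second step is to verify that $\tau_{km}$ is of class $C^1$ on $[0,\e_0]$ with a uniform bound $|\tau_{km}'(\e)|\le C$. The saddles $E_m(\e),I_k(\e)$ and their separatrices depend smoothly on $\e$, hence the intersection points $s_m^-(\e)\in C^-$ and $s_k^+(\e)\in C^+$ are smooth in $\e$; the associated points $b_m^-(\e),b_k^+(\e)\in\G^{\pm}$ obtained by continuing forward orbits stay bounded away from $x=0$ for small $\e$. On such compact sets the time functions $T^\pm_\e$, and therefore the coordinates $a_m^-(\e),a_k^+(\e)$, are smooth in $\e$ by Theorem~\ref{thm:IYa} together with Remark~\ref{rem:bound}. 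Our normalization $\tau_{km}(0)\ne 0$ ensures the fractional part in~\eqref{eqn:tkm} introduces no jump near $\e=0$, and $C$ may be taken independent of $(k,m)$ since there are only finitely many such pairs.

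Combining the two estimates, on $(0,\e_0]$ (shrunk if necessary)
$$
F'(\e)=\tau'(\e)+\tau_{km}'(\e)\le -\frac{\pi}{2\,\e^{3/2}}+O(\e^{-1/2})+C<0,
$$
so $F$ is strictly decreasing on $(0,\e_0]$ and, together with $F(\e)\to+\infty$ as $\e\to 0^+$, it maps $(0,\e_0]$ continuously and bijectively onto $[F(\e_0),+\infty)$. For every integer $n>F(\e_0)$, this produces exactly one $\e_{kmn}\in(0,\e_0)$ solving~\eqref{eqn:conn}. The main obstacle I expect is the uniform $C^1$-control of $\tau_{km}$ up to $\e=0$: the separatrix and the normalizing coordinate both depend on $\e$ simultaneously, so the derivative bound should be derived by a careful chain-rule computation based on Theorem~\ref{thm:IYa} and smooth dependence of separatrices on parameters, rather than asserted.
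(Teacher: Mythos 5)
Your proposal is correct and follows essentially the same route as the paper: both rewrite \eqref{eqn:conn} as $\tau(\e)+\tau_{km}(\e)=n$, use Theorem~\ref{thm:IYa} (and Remark~\ref{rem:bound}) to get a uniform derivative bound on $\tau_{km}$, show $\tau'(\e)\to-\infty$ by differentiating \eqref{eq:tau}, and conclude monotonicity plus the intermediate value theorem. (A tiny inaccuracy: the subleading term in $\tau'(\e)$ is $O(\e^{-1})$ rather than $O(\e^{-1/2})$, but this does not affect the argument.)
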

When $\e=\e_{kmn}$, the vector field $v_{\e}$ has a  separatrix connection between the saddles $E_k(\e)$ and $I_m(\e)$. This separatrix connection makes $n$ winds around $\g $ between $C^-$ and $C^+$.

\begin{proof}   Take $\e_0 > 0$ so small that the function $\eps \mapsto \tau_{km}(\e)$ is well-defined on $[0,\e_0]$. By Theorem~\ref{thm:IYa}, this function has a bounded derivative on the whole segment. On the other hand, $\tau' (\e ) \to -\infty $ as $\e \to 0$. Indeed, by~\eqref{eq:tau}
$$
\tau (\e ) = \left.\strut F(x,\e ) \right|^{\textstyle x = x_\e(b^+)}_{\textstyle x = x_\e(b^-)}
$$
where
$$
F(x,\e ) = \frac {1}{\sqrt \e }\arctan \frac {x}{\sqrt \e } + \frac {a(\e )}{2}\log (x^2 + \e ).
$$
Then
$$
\tau'(\e ) = F_\e (x,\e )|^{x = x_\e(b^+)}_{x = x_\e(b^-)} + F_x(x_\e (b^+), \e ) \cdot D_\e x_\e (b^+) - F_x(x_\e (b^-), \e ) \cdot D_\e x_\e (b^-).
$$
For any $b \ne 0$, the functions $F(b,\e )$ and $x_\e (b)$ are well defined in a \nbd of $\e = 0$ and have bounded derivatives. Hence, the second and the third terms in the expression for $\tau' (\e )$ are bounded near $\e = 0$. On the other hand,
$$
F_\e (x,\e ) = -\frac {1}{2\e^{3/2}}\arctan \frac {x}{\sqrt \e }  - \frac {1}{2\e (x^2+\e )}+\dots
$$
dots stand for bounded terms. Hence,
$$
F_\e (x,\e )|^{x = x_\e(b^+)}_{x = x_\e(b^-)} \to -\infty \mbox{ as } \e \searrow 0.
$$
Take $\e $ so small that
$$
(\tau (\e ) + \tau_{km}(\e ))' < 0.
$$
Then for any $n > \tau (\e_0) + \tau_{km}(\e_0)$, the connection equation~\eqref{eqn:conn} has a unique solution $\e_{kmn} < \e_0$.
\end{proof}

\subsection{The bifurcation scenario} \label{sub:scen}

The bifurcation scenario in a one-parameter family is a sequence of bifurcations that occur as $\e $ changes. The previous section shows that for $\e>0$, the sparkling saddle connections between saddles $E_{k}(\e)$ and $I_m(\e)$ occur for $\e=\e_{kmn}$.

Each bifurcation of the sparkling saddle connection goes in the same way as for usual saddle connections: the incoming separatrix of one saddle changes its $\a$-limit set, and the outgoing separatrix of another saddle changes its $\omega$-limit set, see Figure \ref{fig:bif-connect-a}. The only additional feature of the sparkling saddle connection is that for the critical parameter value $\e=\e_{kmn}$ the connection between the saddles  $E_m(\e)$ and $I_k(\e)$ winds around $\gamma$ many times, see Figure \ref{fig:bif-connect-b}. Yet topologically these pictures are the same: the second one may be transformed to the first one by the iterated Dehn twist.

\begin{figure}
\begin{center}
\centering
\subcaptionbox{\label{fig:bif-connect-a}}{\includegraphics[width=0.17\textwidth]{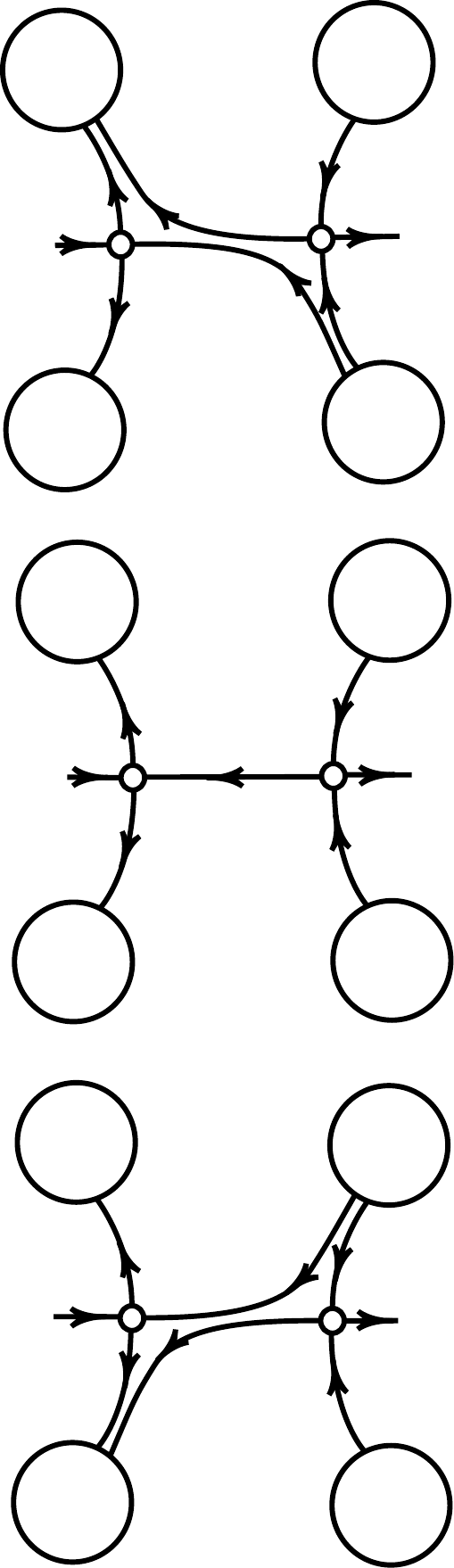}}
 \hfil
\subcaptionbox{\label{fig:bif-connect-b}}{\includegraphics[width=0.4\textwidth]{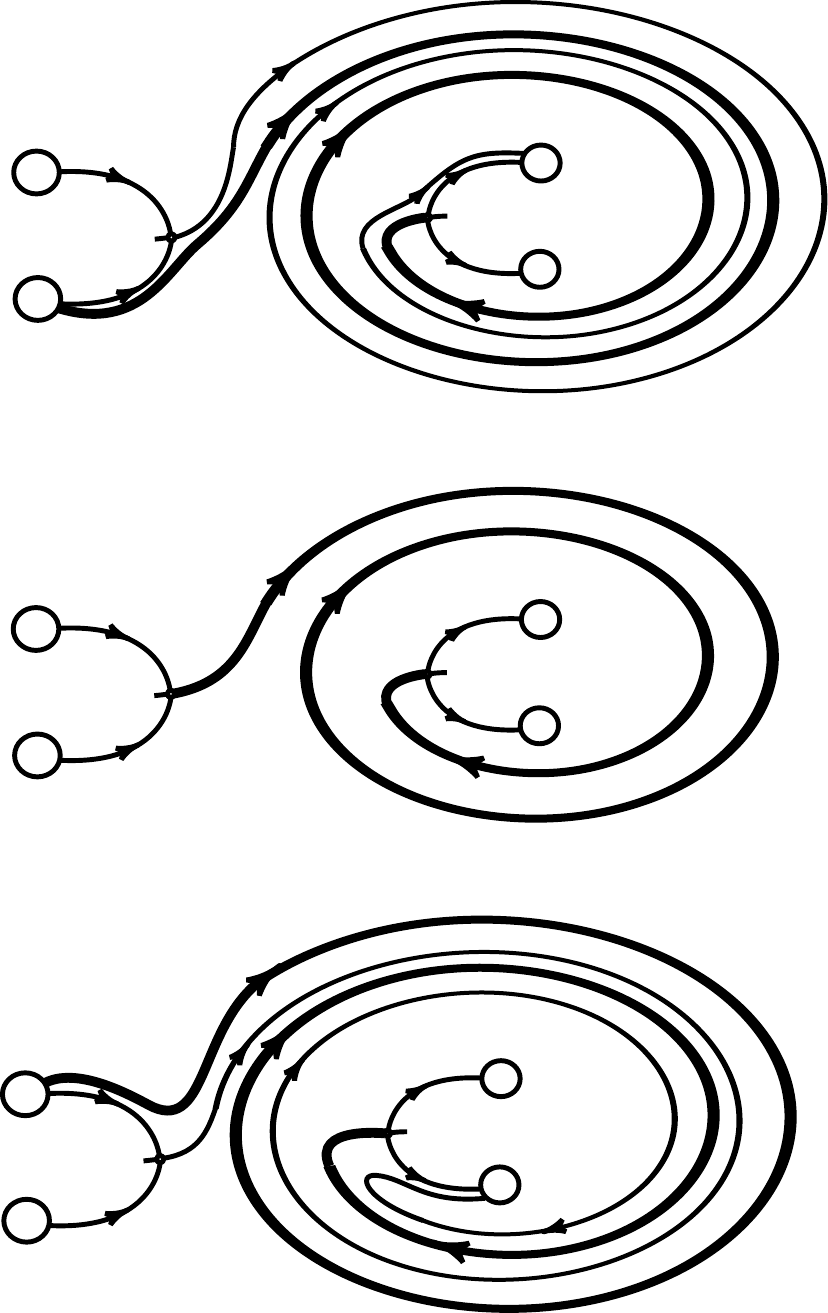}}
\caption{The bifurcation of a saddle connection (left) and the bifurcation of the sparkling saddle connection (right). Large circles are \nccs around $\alpha$-and $\omega$-limit sets of saddle separatrices, small circles are saddles}\label{fig:bif-connect}
  \end{center}
  \end{figure}

To finish the description of the bifurcation scenario, we need to describe the order in the set $\{\e_{kmn}\}$.
Recall that we assume without loss a generality that none of  $\tau_{km}$ is equal to zero.
The order of numbers $\e_{kmn}$ is described by the following proposition.
\begin{prop}\
\label{prop-order-e}
For sufficiently large $n$ and any $k,m,k',m'$, we have $\e_{kmn}< \e_{k'm'(n-1)}$.

For sufficiently large fixed $n$, the order of $\e_{kmn}$ does not depend on $n$ and coincides with the order of $\tau_{km}$. In more detail, if for some natural $k,m,k',m'$, we have $0<\tau_{km} < \tau_{k'm'}<1$, then for sufficiently large $n$, $\e_{kmn} < \e_{k'm'n}$.
\end{prop}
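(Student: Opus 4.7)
\begin{skproof}
The plan is to reduce both statements to monotonicity properties of the auxiliary function $f_{km}(\e):=\tau(\e)+\tau_{km}(\e)$, for which the connection equation \eqref{eqn:conn} reads simply $f_{km}(\e)=n$. The proof of \autoref{prop:conn} already establishes that $\tau'(\e)\to-\infty$ while $\tau'_{km}$ stays bounded as $\e\to0^+$, so on some initial interval $(0,\e_0]$ the function $f_{km}$ is strictly decreasing and satisfies $f_{km}(\e)\to+\infty$ as $\e\to0^+$. Hence $\e_{kmn}\in(0,\e_0]$ is uniquely defined for all sufficiently large $n$, and $\e_{kmn}\to 0$ as $n\to\infty$. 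Since there are only finitely many index quadruples $(k,m,k',m')$, all the thresholds and estimates below can be made uniform.

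For the first statement, I would evaluate the comparison function $f_{k'm'}$ at the point $\e_{kmn}$:
$$
f_{k'm'}(\e_{kmn})-n \;=\; \tau_{k'm'}(\e_{kmn})-\tau_{km}(\e_{kmn}).
$$
Because the $\tau_{ij}(\e)$ extend continuously near $\e=0$ with values in a compact subinterval of $(0,1)$, this difference lies in a fixed compact subinterval of $(-1,1)$, and as $\e_{kmn}\to 0$ it converges to $\tau_{k'm'}-\tau_{km}\in(-1,1)$. In particular, for all sufficiently large $n$ it is strictly greater than $-1$, so $f_{k'm'}(\e_{kmn})>n-1=f_{k'm'}(\e_{k'm'(n-1)})$. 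Strict monotonicity of $f_{k'm'}$ on $(0,\e_0]$ then yields $\e_{kmn}<\e_{k'm'(n-1)}$.

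For the second statement, I would subtract the connection equations at $\e_{kmn}$ and $\e_{k'm'n}$:
$$
\tau(\e_{kmn})-\tau(\e_{k'm'n})\;=\;\tau_{k'm'}(\e_{k'm'n})-\tau_{km}(\e_{kmn}).
$$
Since both $\e_{kmn}$ and $\e_{k'm'n}$ tend to $0$, the right-hand side converges to $\tau_{k'm'}-\tau_{km}>0$ by assumption. Hence for all sufficiently large $n$ one has $\tau(\e_{kmn})>\tau(\e_{k'm'n})$, and since $\tau$ is strictly decreasing near $0$ this forces $\e_{kmn}<\e_{k'm'n}$. Applying this to every pair of indices simultaneously shows that the order of the $\e_{kmn}$'s is independent of $n$ for $n$ large and coincides with the order of the $\tau_{km}$'s.

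There is no substantial obstacle: the whole argument rests on comparing solutions of a strictly monotone equation whose right-hand side is shifted by a bounded amount, and the essential analytic input, namely the unboundedness of $\tau'(\e)$ versus the boundedness of $\tau_{km}'(\e)$, is already available from \autoref{prop:conn}. The only point requiring mild care is uniformity in the finite set of indices, which is automatic.
\end{skproof}
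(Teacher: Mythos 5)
Your proof is correct and follows essentially the same route as the paper's own argument: both rely on the connection equation $\tau(\e)+\tau_{km}(\e)=n$, the monotone divergence of $\tau$ as $\e\searrow 0$ (established in \autoref{prop:conn}), and the continuity of $\tau_{km}(\e)$ near $\e=0$ with limit in $(0,1)$; your packaging via the auxiliary function $f_{km}=\tau+\tau_{km}$ is a cosmetic reorganization of the same estimates. The paper simply states the first claim as an immediate consequence of these facts and then gives the short comparison of $\tau$-values for the second, so your version just spells out the details it leaves implicit.
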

\begin{proof}
The first statement follows directly from the connection equation \eqref{eqn:conn}, the inequality $0<\tau_{km}(\e)<1$ and monotonicity of $\tau$: this function tends monotonically to infinity as positive $\e$ decreases.

Since $\tau_{km} =\tau_{km}(0)$ and  $\tau_{km}(\e)$ depends continuously on $\e$,  we have $\tau_{km}(\e_{kmn}) < \tau_{k'm'}(\e_{k'm'n})$ for large $n$. Now \eqref{eqn:conn} implies $-\tau(\eps_{kmn}) < -\tau(\eps_{k'm'n})$. However, the function $\tau$ decreases in a small neighborhood of zero, so $\eps_{kmn}< \eps_{k'm'n}$.
\end{proof}

 Recall that the numbers $\tau_{km}$ are well-defined modulo an additive constant that depends on the choice of $b^{\pm}$, and the set of bifurcation parameters $\{\e_{kmn}\}$ does not depend on this choice. However, that does not contradict the proposition above, because when we change $b^{\pm}$, the same bifurcation value $\e=\e_{kmn}$ will obtain a different  number $n$. Here we describe this change in more detail.

\begin{defin}
 Let $\e_{kmn}$ be as above. Let $i, 1<i<K$, and $j, 1<j<M$ be two indices, $N$ be an integer number.

 \emph{A cyclical shift of $n$} in the set $\e_{kmn}$  is the change of numeration: the bifurcation parameter $\e_{kmn}$ obtains indices $kmn'$, where $n'= n+N$ for $\e_{kmn}< \e_{ijn}$ and $n'=n+N-1$ otherwise.
\end{defin}

Suppose that we change our choice of $b^{\pm}$ replacing these points by $\tilde b^{\pm}$. The number $\tau_{km}(\e)$ is replaced by $\tilde \tau_{km}(\e) = \{\tau_{km}(\e) +\alpha(\e)\}$, where $\alpha(\e) =   T^-_{\e}(\tilde b^-) - T^+_{\e}(\tilde b^+) $; the charts $T^{\pm}(\e)$ correspond to $b^{\pm}$, see \eqref{eqn:te}. We assume that none of $\tilde \tau_{km}(0)$ is zero.
Similarly, $\tau(\e)$ is replaced by $\tau(\e)-\alpha(\e)$.

 \begin{prop}
 \label{prop-order-e-b}
The change of  $b^{\pm}$ described above results in a cyclical shift of $n$ in the set $\{\e_{kmn}\}$; any cyclical shift may be achieved.
 \end{prop}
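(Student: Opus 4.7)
The plan is to substitute the transformation laws for $\tau_{km}$ and $\tau$ into the connection equation~\eqref{eqn:conn}, then read off how the integer $n$ must change to keep the equation satisfied, and finally match the resulting dichotomy with the definition of a cyclical shift.

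First, after replacing $b^{\pm}$ by $\tilde b^{\pm}$, the new connection equation for the same bifurcation parameter $\e = \e_{kmn}$ reads
\begin{equation*}
\tilde\tau_{km}(\e) \;=\; -\tilde\tau(\e) + n', \qquad \tilde\tau_{km}(\e) = \{\tau_{km}(\e) + \alpha(\e)\}, \quad \tilde\tau(\e) = \tau(\e) - \alpha(\e).
\end{equation*}
Writing $\{x\} = x - \lfloor x \rfloor$ and comparing with the original equation $\tau_{km}(\e) = -\tau(\e) + n$, one immediately obtains
\begin{equation*}
n' \;=\; n \;-\; \bigl\lfloor \tau_{km}(\e) + \alpha(\e) \bigr\rfloor.
\end{equation*}
For $\e$ small the quantity $\tau_{km}(\e) + \alpha(\e)$ is close to $\tau_{km} + \alpha(0)$. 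Decompose $\alpha(0) = N_0 + \beta_0$ with $N_0 \in \zz$ and $\beta_0 \in [0,1)$. Since $\tau_{km} \in (0,1)$ and is bounded away from $0$ and $1$ (we have assumed $\tau_{km}\ne 0$ and $\tilde\tau_{km}(0)\ne 0$, which excludes $\tau_{km} = 1 - \beta_0$), for all sufficiently small $\e$ the floor takes only two values:
\begin{equation*}
\bigl\lfloor \tau_{km}(\e) + \alpha(\e) \bigr\rfloor =
\begin{cases} N_0, & \tau_{km} < 1 - \beta_0, \\ N_0 + 1, & \tau_{km} > 1 - \beta_0. \end{cases}
\end{equation*}

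Now match this with the cyclical shift. By Proposition~\ref{prop-order-e}, for large $n$ the order of the parameters $\e_{kmn}$ mirrors the order of $\tau_{km}$. Choose indices $(i,j)$ so that $\tau_{ij}$ is the smallest of the numbers $\tau_{km}$ exceeding the threshold $1-\beta_0$ (the non-synchronization of $A^{\pm}$ guarantees uniqueness, and $1<i<K$, $1<j<M$ may be arranged by perturbing $\beta_0$ slightly, which is always possible by varying $\tilde b^{\pm}$); then $\tau_{km} < 1-\beta_0$ is equivalent to $\e_{kmn}<\e_{ijn}$ for large $n$. Setting $N := -N_0$, the displayed formula becomes exactly the rule of the cyclical shift in the definition preceding the proposition.

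It remains to verify that any cyclical shift is realized by a suitable choice of $\tilde b^{\pm}$. This reduces to showing that $\alpha(0) = T^-_0(\tilde b^-) - T^+_0(\tilde b^+)$ can be prescribed arbitrarily, which follows from the fact that $T^{\pm}_0(b)\to +\infty$ as $b\to 0$: by choosing $\tilde b^{\pm}$ close to $0$ on $\G^{\pm}$ one can realize any prescribed pair of values of $T^{\pm}_0$, hence any prescribed $\alpha(0)$, and in particular any $(N_0,\beta_0)$ corresponding to the desired integer shift $N$ and threshold index $(i,j)$. The main (but mild) obstacle is just the bookkeeping around the threshold: one must ensure that $\beta_0$ is chosen in the open interval between the two consecutive values of $1-\tau_{k'm'}$ that straddle the desired position, so that no $\tilde\tau_{k'm'}(0)$ accidentally vanishes; this is an open condition and is automatic for generic $\tilde b^{\pm}$.
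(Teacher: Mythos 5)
Your proof is correct and follows essentially the same route as the paper: substitute the transformed quantities into the connection equation, read off $n' = n - \lfloor \tau_{km}(\e) + \alpha(\e) \rfloor$, observe that this floor is locally constant in $\e$ but takes one of two consecutive integer values according to whether $\tau_{km}$ lies below or above the threshold $1-\beta_0$, and identify this dichotomy with the definition of a cyclical shift via Proposition~\ref{prop-order-e}. One minor sign slip at the very end: with the paper's convention $T^+(b) = \int_{b^+}^{b} dx/u(x)$ and $u>0$ near $0$, one has $T^+_0(b) \to -\infty$ (not $+\infty$) as $b \to 0^+$; only $T^-_0(b) \to +\infty$ as $b \to 0^-$. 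This does not affect your conclusion, since $-T^+_0(\tilde b^+) \to +\infty$, so $\alpha(0) = T^-_0(\tilde b^-) - T^+_0(\tilde b^+)$ still ranges over an unbounded set as $\tilde b^\pm \to 0$, which is all that is needed.
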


 \begin{proof}
 Recall that $\e_{kmn}$ is the solution of the connection equation $\tau_{km}(\e ) = -\tau (\e ) + n$. Then it also solves the equation $\tau_{km}(\e )+\alpha(\e) = -\tau (\e )+\alpha(\e) + n$, i.e.
 $$
 \{\tau_{km}(\e )+\alpha(\e)\} = -(\tau (\e )-\alpha(\e)) + n - [\tau_{km}(\e )+\alpha(\e)].
 $$
 This is a connection equation in new coordinates, but $n$ is replaced by $n-[\tau_{km}(\e )+\alpha(\e)]$. So the bifurcation parameter $\e_{kmn}$ will obtain another number $n' = n-[\tau_{km}(\e )+\alpha(\e)]$.

 Note that the integer number $[\tau_{km}(\e )+\alpha(\e)]$ does not depend on $\e$ for small $\e$; this follows from the fact that none of $\tilde \tau_{km}(0)$ is zero. However it may depend on $k,m$: if it equals $N$ for $0< \tau_{km}(\e) < \{1-\alpha(\e)\}$, then it equals $N+1$ for $\{1-\alpha(\e)\}< \tau_{km}(\e)<1$. These inequalities on $\tau_{km}(\e)$ are equivalent to $\e_{kmn}< \e_{ijn}$ and $\e_{kmn}\ge \e_{ijn}$ for some $i,j$, because the order in the set $\{\e_{kmn}, n \text{ fixed}\}$ is the same as the order of $\tau_{km}$; so we have a cyclical shift of $n$ in the set $\{\e_{kmn}\}$. One can easily show that the choice of $\tilde b^{\pm}$ may give any prescribed value of $\alpha(0)$, thus it may result in arbitrary cyclical shift of $\{\e_{kmn}\}$. This completes the proof.
\end{proof}

%
%


We conclude that the bifurcation scenario in  $\pc$ families repeats cyclically as $\e\to 0$. As $\e$ decreases, in the family $v_{\e}$ we have several saddle connections that make $n$ winds in a small neighborhood of  $\gamma$ between $C^-$ and $C^+$,  then several saddle connections that make $n+1$ winds, etc. The $n$-wind saddle connections occur in one and the same order for all $n$, and this order is determined by the order of $\tau_{km}$. However, the number $n$ of winds of a particular saddle connection depends on the choice of $b^-, b^+$.

One of our main goals is achieved: the bifurcation scenario is described and justified.

Note that the bifurcating separatrices of the \vfs $v_\e$ are located not only inside a small \nbd of the large bifurcation support (see Definition \ref{def:lbs}), but also outside it. Yet this bifurcation is predicted by what happens in a \nbd of the large bifurcation support.

\subsection{Realization Lemma for discs}

In this and the next sections Theorem \ref{thm:real1} is proved, skipping the routine details.

Begin with a Realization Lemma that will be used not only here but in the study of the global bifurcations of the \vfs with a separatrix loop. Consider a Morse -- Smale \vf in a disc $D$ with the boundary $C$ transversal to $v$. No special coordinate on $C$ is considered. Let $A$ be the set of the intersection points of the separatrices of $v$ with $C$; two points of $A$ are equivalent iff they belong to the separatrices of the same saddle. Thus $A$ is a marked set. Let us call it \emph{the characteristic set} of $v$.

\begin{lem}
  \label{lem:marked} Consider a marked set $A$ on a circle $C$ that is a boundary of a disc $D$. Then \tes a $C^{\infty}$ \vf $v$ in $D$ \st $A$ is a characteristic set of $v$.
\end{lem}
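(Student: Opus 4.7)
The plan is to assemble $v$ from local Morse--Smale building blocks arranged along the combinatorial skeleton dictated by the marked set $A$. Write $A$ as the union of the size-two classes $\{a_i,b_i\}_{i=1}^k$ and the singletons $\{c_l\}_{l=1}^{\ell}$. For each pair $(a_i,b_i)$ choose a smooth embedded arc $\gamma_i\subset\overline{D}$ joining $a_i$ and $b_i$ with interior in the open disc. The non-intermingling condition in Definition \ref{def:charset} is precisely what allows the arcs $\gamma_i$ to be chosen pairwise disjoint, so the collection $\{\gamma_i\}$ is a non-crossing chord diagram cutting $D$ into $k+1$ open faces $F_0,\dots,F_k$.

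Next I would put down local models for $v$. Fix $v$ to be outward pointing along $C$. In each face $F_j$ place one hyperbolic source $P_j$. On each chord $\gamma_i$ place one hyperbolic saddle $S_i$ in the interior of $\gamma_i$, arranged so that the two unstable separatrices of $S_i$ trace out $\gamma_i$ and terminate at $a_i,b_i$ on $C$, while the two stable separatrices of $S_i$ leave $\gamma_i$ transversally into the two adjacent faces and have $\alpha$-limits at the sources of those faces. For each singleton $c_l$ on the boundary of the face $F_{n(l)}$, add inside $F_{n(l)}$, close to $c_l$, a \emph{teardrop} block: one hyperbolic saddle $T_l$ and one hyperbolic sink $N_l$, with the two stable separatrices of $T_l$ emanating from $P_{n(l)}$ and together bounding a topological disc that contains $N_l$; the unstable separatrix of $T_l$ pointing into this disc terminates at $N_l$, and the other unstable separatrix terminates at $c_l$ on $C$. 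Each of these pictures is a standard Morse--Smale patch realizable by an explicit formula in a coordinate chart.

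It remains to glue. Outside small neighborhoods of the prescribed equilibria and separatrices, each face is simply connected and the flow is prescribed merely to transport orbits outward from the face source toward $C$, the chord saddles, and the teardrops; a standard partition-of-unity construction then produces a $C^{\infty}$ vector field $v$ on $D$ that is transversal to $C$ and agrees with the prescribed local models. By construction the intersections of the separatrices of $v$ with $C$ are exactly the points of $A$, and two such intersections lie on separatrices of a common saddle iff they are paired in $A$; therefore $A$ is the characteristic set of $v$. A Poincar\'e--Hopf check confirms consistency: $(k+1)$ sources $+\,\ell$ sinks $-\,(k+\ell)$ saddles $=1=\chi(D)$.

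The only real obstacle is to perform the gluing in such a way that the resulting $v$ is globally Morse--Smale --- in particular, has no spurious saddle connections. This is handled by taking each gluing region inside a face to be a topological disc with boundary transversal to $v$ on which the flow is radial from the face source, and by placing the auxiliary $T_l$ and $N_l$ in generic position so that the only saddle separatrices crossing that boundary are the prescribed ones; any accidental saddle connection can then be broken by an arbitrarily small perturbation supported away from the prescribed skeleton.
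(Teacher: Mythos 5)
Your construction is essentially the same as the paper's, just time-reversed: the paper takes $v$ pointing \emph{inward} along $C$, places an attractor in each face cut out by the chords joining paired points, and for each singleton attaches a saddle-plus-repeller ``teardrop'' whose unstable separatrices close up at the face attractor; you reverse time, use sources in the faces and saddle-plus-sink teardrops, which is the identical combinatorial skeleton. Your extra remarks (Poincar\'e--Hopf consistency, ruling out spurious saddle connections by genericity) are sound and flesh out what the paper leaves as ``it is easy to make the vector field smooth,'' but do not change the approach.
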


\begin{figure}
    \centering
    \subcaptionbox{\label{fig:realiz-a}}{\includegraphics[scale=0.6]{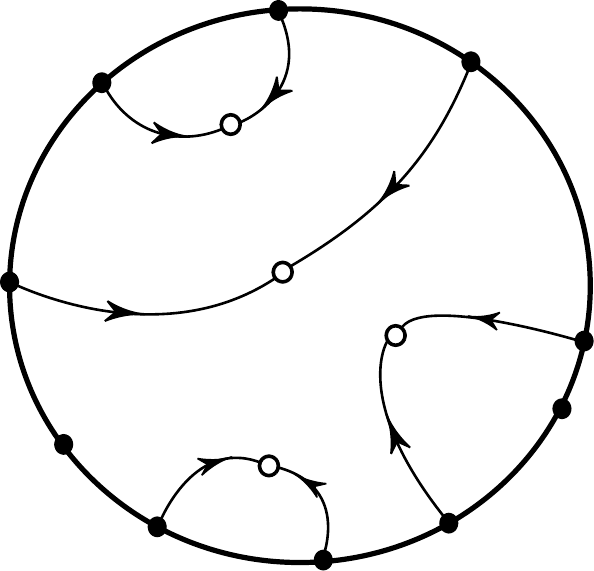}}
    \hfil
    \subcaptionbox{\label{fig:realiz-b}}{\includegraphics[scale=0.6]{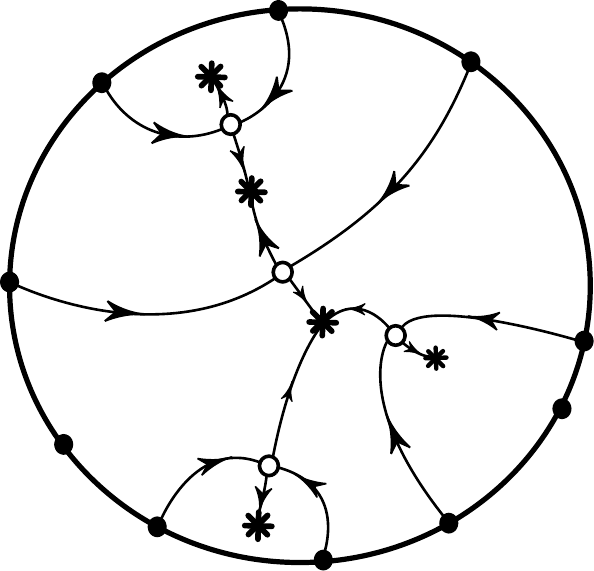}}
    \hfil
    \subcaptionbox{\label{fig:realiz-c}}{\includegraphics[scale=0.6]{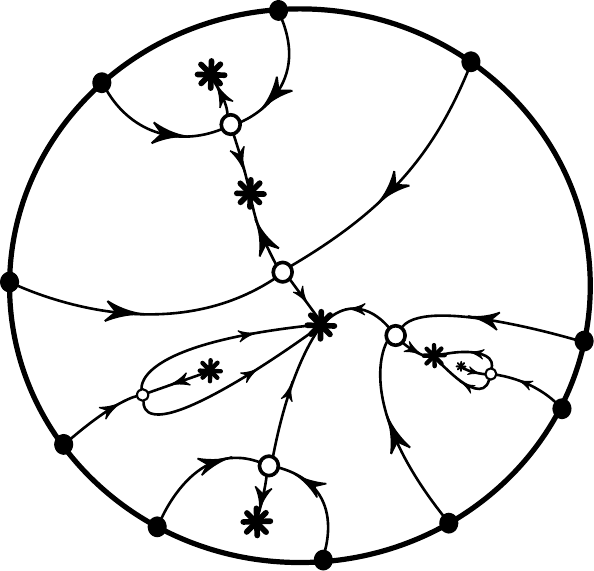}}
\caption{Realization of characteristic sets: steps a, b, c.} \label{fig:real}
\end{figure}


\begin{proof}
Without loss of generality, we assume that the vector field $v$ on $C$ points inside $D$.

$a$. Take any two equivalent points of the set $A$ and connect them by a smooth simple arc transversal to $C$ at its endpoints, whose interior part lies in $D$. The arcs should be pairwise disjoint. Take a point in each arc different from its endpoints; this will be a saddle of the \vf $v$ to be constructed. The parts of the arcs from the endpoints  to the saddles should be arcs of the \emph{incoming separatrices of} $v$, see Figure~\ref{fig:realiz-a}.
This operation is proceeded for all the pairs of equivalent points of $A$.

$b$. The disc $D$ is split by the arcs just constructed to topological disks. Let us choose a point inside each of these disks; this will be an attractor of $v$. Connect each saddle on the boundary of the topological disk above to the attractor inside it; this will be an outgoing separatrix of the saddle, see Figure~\ref{fig:realiz-b}. Now each saddle has four separatrices: two ingoing and two outgoing.

$c$. The domains to which $D$ is split have one of the two shapes shown on Figure~\ref{fig:shapes}.

\begin{figure}
    \centering
\includegraphics[scale=0.5]{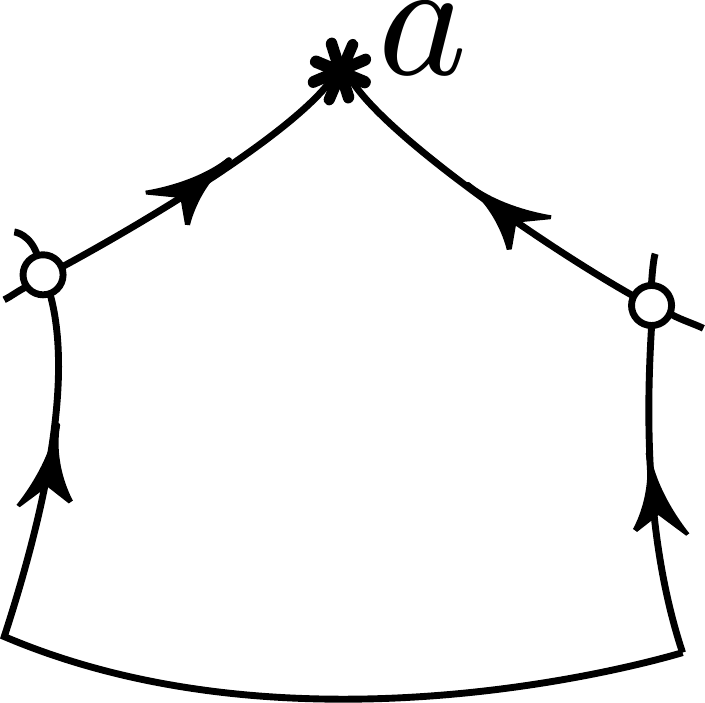}
\hfil
\includegraphics[scale=0.5]{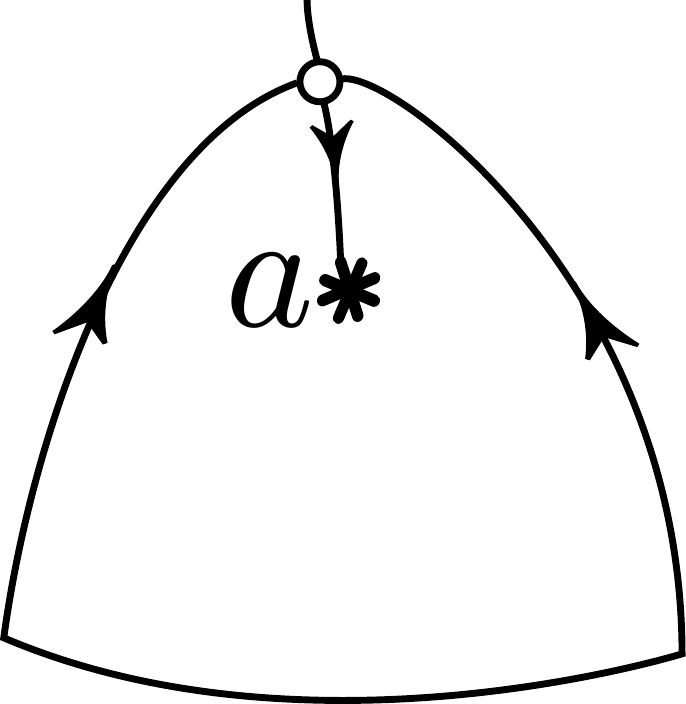}
\caption{Domains constructed in Step $b$}\label{fig:shapes}
\end{figure}

\begin{figure}
    \centering
\subcaptionbox{\label{fig:shapes1-a}}{\includegraphics[scale=0.5]{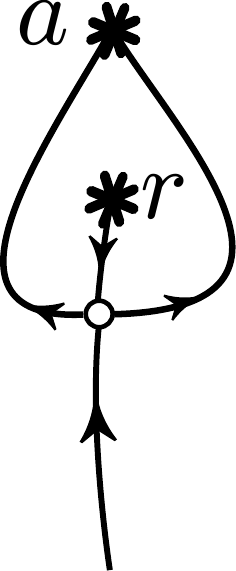}}
\hfil
\subcaptionbox{\label{fig:shapes1-b}}{\includegraphics[scale=0.5]{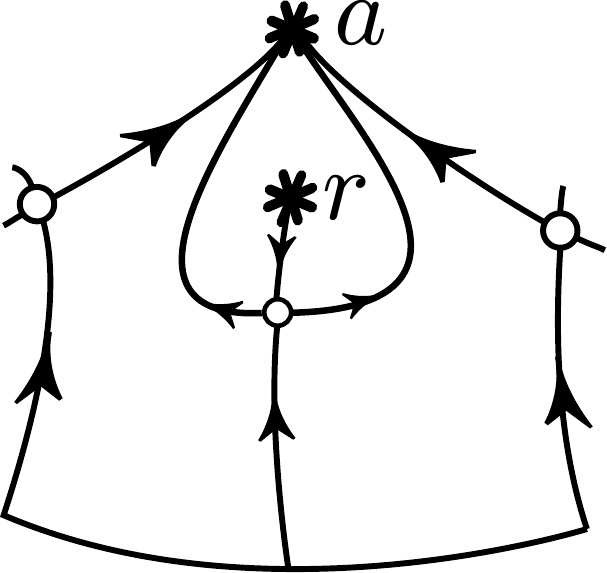}}
\hfil
\subcaptionbox{\label{fig:shapes1-c}}{\includegraphics[scale=0.5]{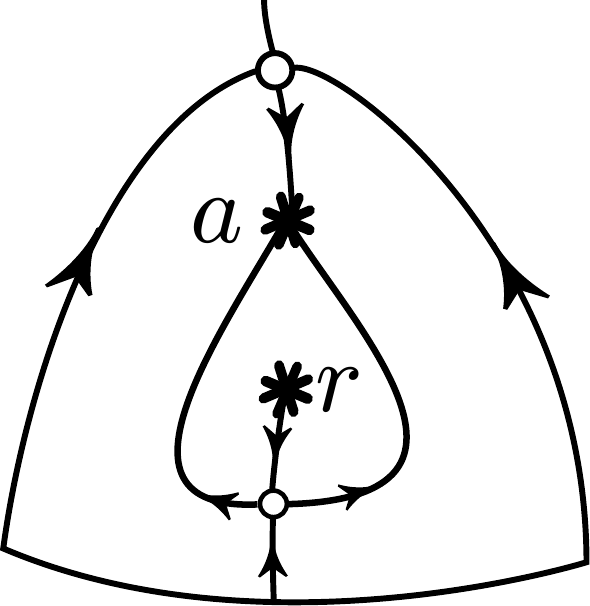}}
\caption{Constructing unstable separatrices in Step $c$}\label{fig:shapes1}
\end{figure}

On any arc of the curve $C$ which is an edge of the domains mentioned above, take all the points of the set $A$; each of them is a one-point equivalence class in the marked set $A$. Add to this point a graph shown at Figure~\ref{fig:shapes1-a}; the attractor $a$ on this figure should coincide with the attractor $a$ in the domain, see Figures~\ref{fig:shapes1-b}, \ref{fig:shapes1-c}. The construction of the separatrices of $v$ is over, see Figure~\ref{fig:realiz-c}. We can now construct the \vf $v$ with these separatrices  in $D$. This may be easily done in any connected component of the complement to  these separatrices in $D$.
It is easy to make the \vf $v$ smooth.
\end{proof}

\subsection{Realization Theorem for the sphere}

Here we prove  Theorem \ref{thm:real1}. The \vf will be constructed separately in the annular \nbd of the parabolic cycle, and in the components of its complement to the sphere. Begin with the annulus.

Let $U$ be the annulus $ 1 \le r \le 3$ in the polar coordinates $r, \a$. Consider a \vf $v_U$ in $U$ given by
$$ \dot \a = 1, \ \dot r = -(r-2)^2.$$
This system has  a parabolic cycle $\g$ given by $\{r=2\}$. Let $\G$ be a cross-section to $\g$ that belongs to the ray $\a = 0$. Let $C^{\pm}$ be the boundary circles of $U: C^- = \{r = 3\}, \ C^+ = \{r = 1\}$. Let $\ph^{\pm}=\ph_0^{\pm} $ be the coordinates on $C^{\pm}$ constructed for $v|_U$ as in Section \ref{subsec:non}. Let $A^{\pm} \subset C^{\pm}$ be the characteristic sets given in Theorem \ref{thm:real1}.

Let $D^{\pm}$ be the disc on the sphere disjoint from the interior of $U$ and bounded by $C^{\pm}: S^2 = D^+ \cup U \cup D^- $. By Lemma \ref{lem:marked}, \tes a smooth \vf $v^{\pm}$ on $D^{\pm}$ \st $A^{\pm}$ is a characteristic set of $v^{\pm}$ in $D^{\pm}$. Thus we have constructed a piecewise $C^{\infty}$ \vf $W$ on $S^2$ that is discontinuous on $C^{\pm}$. Let us change the smooth structure on $S^2$ in such a way that $W$ will become a $C^{\infty}$ smooth vector field. We will get a smooth \vf $W$ on a smooth manifold $M^2$ homeomorphic to a sphere. But \tes only one smooth structure on $S^2$. Hence, \tes a $C^{\infty}$ \diffeo $H: M^2 \to S^2$. The \vf $ v = H_*W $ is the desired one. This proves Theorem \ref{thm:real1}.

\section{Classification of $\pc$-families} \label{sec:class}

In this section we prove Theorem~\ref{thm:class}.


\subsection{Theorem \ref{thm:class} part 1}  \label{subsec:fampa}
Our goal is to prove that any pair of sing-equivalent local families gives rise to two pairs of characteristic sets that are equivalent in the sense of Definition~\ref{def:seteq:char}.
The heuristic proof is straightforward: as the families $V$ and $W$ are sing-equivalent, the homeomorphism $h$ identifies the values of $\e$ and $\d$ that correspond to saddle connections, thus identifies $\{\e_{kmn}\}$ and $\{\d_{kmn}\}$. Proposition \ref{prop-order-e} implies that the sets $\La (A^\pm )$ and $\La (B^\pm )$ are ordered in the same way. This implies the statement.

Let us pass to the detailed proof.

Let $V=\{v_{\e}\}$ and $W=\{w_{\delta}\}$ be two families satisfying assumptions of Theorem \ref{thm:class} part 1. Let $H=(h, H_{\e})$ be the sing-equivalence of $V,W$.
Following the previous sections, we denote by $\gamma$ the parabolic cycle of $v_0$, by $C^{\pm}$ its transversal loops, by  $l_j^{\pm}(\e)$ the separatrices of hyperbolic saddles $E_j(\e)$, $I_{j}(\e)$ of $v_{\e}$ that cross $C^{\pm}$. Let  $\tilde \gamma, \tilde C^{\pm}, \tilde l_j^{\pm}(\delta), \tilde E_j(\delta), \tilde I_{j}(\delta)$ be analogous objects for the family $W$ such that $\tilde l_j^{\pm} = H_0(l_j^{\pm})$, $\tilde E_j=H_0(E_j), \tilde I_{j}=H_0(I_j)$. We assume that the parabolic cycles disappear for $\e>0, \delta>0$, otherwise we reverse the parameter.

We will need the following lemma on sing-equivalence.
\begin{lem}
\label{lem:sing-sep}
In the above assumptions, $H_{\e}(l_i^{\pm}(\e)) = \tilde l^{\pm}_{i} (h(\e))$.
\end{lem}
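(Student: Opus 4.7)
The plan is to reduce the statement to a discrete matching of local separatrices at each saddle, and then propagate the known matching from $\e = 0$ to the whole parameter interval by a connectedness argument. First, by~\eqref{eqn:sing1} applied to the hyperbolic saddle $E_i$ (or $I_i$), one has $H_\e(E_i(\e)) = \tilde E_i(h(\e))$, so $H_\e$ restricts to a homeomorphism of a small neighborhood of $E_i(\e)$ onto a small neighborhood of $\tilde E_i(h(\e))$, conjugating local phase portraits. Since $H_\e$ sends orbits of $v_\e$ to orbits of $w_{h(\e)}$ preserving time-orientation, and the saddle $E_i(\e)$ has exactly four local separatrices (two stable and two unstable, alternating in the cyclic order around the saddle), $H_\e$ induces a type-preserving and cyclic-order-preserving bijection between the four local separatrices of $E_i(\e)$ and those of $\tilde E_i(h(\e))$. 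Only two such bijections exist, so the image $H_\e(l_i^{\pm}(\e))$ is one of two specific separatrices of $\tilde E_i(h(\e))$.

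Encode this discrete choice by $\mu(\e)\in\{0,1\}$, with $\mu(\e)=0$ meaning $H_\e(l_i^{\pm}(\e)) = \tilde l_i^{\pm}(h(\e))$. The hypothesis $\tilde l_i^{\pm} = H_0(l_i^{\pm})$ gives $\mu(0)=0$, so it suffices to show that $\mu$ is locally constant in $\e$; connectedness of the one-dimensional parameter interval then forces $\mu\equiv 0$. To establish local constancy at an arbitrary $\e_0$, I would pick a short arc $A(\e_0)\subset l_i^{\pm}(\e_0)$ emerging from $E_i(\e_0)$, let $A(\e)\subset l_i^{\pm}(\e)$ be its continuous continuation, and use joint continuity of $H$ at $(\e_0,E_i(\e_0))$ (which is part of the sing-equivalence condition) to confine all images $H_\e(A(\e))$ to a prescribed small neighborhood $U$ of $\tilde E_i(h(\e_0))$ for $\e$ near $\e_0$. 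Inside $U$ the image arc must lie on one of the four local separatrix pieces of $\tilde E_i(h(\e))$; these pieces vary continuously with $\e$, are pairwise disjoint away from the saddle, and at $\e=\e_0$ the image arc lies on the $\tilde l_i^{\pm}(h(\e_0))$-piece. Together with the cyclic-order constraint on admissible matchings, this forces the selection to remain $\tilde l_i^{\pm}(h(\e))$ throughout a neighborhood of $\e_0$, i.e.~$\mu(\e)=0$. Once coincidence on this arc is established, the global equality $H_\e(l_i^{\pm}(\e)) = \tilde l_i^{\pm}(h(\e))$ follows, because $H_\e$ sends orbits to orbits and a separatrix of a hyperbolic saddle is uniquely determined by any of its local germs at the saddle.

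The main obstacle lies in the local-constancy step. Sing-equivalence does not provide joint continuity of $H$ at arbitrary points of $B\times S^2$, only at the continuations of singular points and hyperbolic limit cycles of $v_0$; consequently one cannot simply track the image of a single non-saddle point under $H_\e$ as $\e$ varies. The remedy is to work with arcs starting at the saddle rather than with individual interior points, to exploit joint continuity of $H$ only at the saddle, and to combine this with the finiteness of admissible matchings (two per saddle by the cyclic order argument) to upgrade ``image on the correct separatrix at $\e_0$'' to ``image on the correct separatrix for all $\e$ near $\e_0$.'' Getting this upgrade right is the only nontrivial part of the argument; the rest is bookkeeping with continuously varying local stable and unstable manifolds at hyperbolic saddles and the connectedness of the parameter interval.
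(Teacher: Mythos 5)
Your approach and the paper's are genuinely different, and yours has a gap in the crucial step.

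The paper argues pointwise in $\e$, with no continuity-in-$\e$ considerations at all. For each fixed $\e$ it rules out the ``swap'' $H_\e(L_1^\e)=\tilde L_3^{h(\e)}$ directly, by looking at the \emph{other} separatrices of $E_i$: in one case the two stable separatrices $L_2,L_4$ have distinct hyperbolic $\alpha$-limit sets $R_2\neq R_4$, and sing-equivalence pins down $H_\e(R_j^\e)=\tilde R_j^{h(\e)}$ by \eqref{eqn:sing1}; since a rotation of separatrices would force $H_\e(L_2^\e)=\tilde L_4^{h(\e)}$ and hence $\alpha$-limit set $\tilde R_4^{h(\e)}$ rather than $\tilde R_2^{h(\e)}$, one gets a contradiction. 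In the other case the companion unstable separatrix $L_3$ has a hyperbolic $\omega$-limit set while $\tilde L_1^{h(\e)}$ does not, again a contradiction. This argument crucially exploits the genericity hypotheses on $v_0$ (hyperbolicity of all other attractors/repellors, absence of saddle connections), which is exactly what makes the swap impossible for every individual $\e$.

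Your proposal instead tries to propagate the correct matching from $\e=0$ by a connectedness argument, and the local-constancy step does not close. Joint continuity of $H$ at $(\e_0,E_i(\e_0))$ only confines the image arc $H_\e(A(\e))$ to a small neighborhood $U$ of $\tilde E_i(h(\e_0))$; it gives no control over \emph{which} of the four local separatrix germs of $\tilde E_i(h(\e))$ the arc lies on, since both admissible germs (the correct one $\tilde l_i^\pm(h(\e))$ and the swapped one) emanate from $\tilde E_i(h(\e))$ into $U$. Distinguishing them would require some continuity of $\e\mapsto H_\e$ at points of the separatrix away from the saddle, which sing-equivalence does not provide. The cyclic-order constraint reduces the possibilities to two, but does not prevent $\mu(\e)$ from jumping between them across $\e_0$. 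The discrete invariant you define is a sensible bookkeeping device, but to make it locally constant you would in the end have to invoke the same limit-set rigidity that the paper uses; without that, nothing in your argument rules out a jump. So the step you yourself flagged as ``the only nontrivial part'' is indeed where the argument fails.
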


This lemma implies Theorem \ref{thm:class} part 1.
Indeed, let $A^\pm$ be characteristic sets for $v_0$, and let $B^\pm $ be characteristic sets for $w_0$. Since the vector fields $v_0$ and $w_0$ are topologically conjugate, we have  $|A^+|=|B^+|$ and $|A^-|=|B^-|$. Let $\e_{kmn} $ be bifurcation parameters for the family $V$ and $\d_{kmn}$ be bifurcation parameters for $W$.

Now, for $\e=\e_{kmn}$, the separatrices $l_k^+(\e)$ and $l_m^-(\e)$ coincide, thus their images $H_{\eps}(l_k^+(\e)) = \tilde l_k^+(h(\e))$ and $H_{\eps}(l_m^-(\e)) = \tilde l_m^-(h(\e))$ coincide; here we use Lemma \ref{lem:sing-sep}. So the separatrices $\tilde l_{k}^+(h(\eps))$ and $\tilde l_m^-(h(\eps))$ of $w_{h(\eps)}$ form a saddle connection, and we conclude that $h(\e_{kmn}) = \delta_{kmn'}$ for any $n$ and some $n'$.

Finally, the homeomorphism $h$ takes the set $\{\e_{kmn}\}$ to the set $\{\delta_{kmn}\}$ and preserves $k,m$. However it may change $n$.

Note that $\{\e_{kmn} \mid n=n_0\}$ is the set of $|A^-|\cdot |A^+|$ subsequent numbers in the set $\{\e_{kmn}\}$, so $h$ takes them to $|A^-|\cdot |A^+| = |B^-|\cdot |B^+|$ subsequent numbers among $\{\delta_{kmn}\}$. Recall that changing $b^{\pm}$, we may achieve any cyclical shift of $n$ in the set $\{\e_{kmn}\}$, see Proposition \ref{prop-order-e-b}. So with a suitable choice of $b^{\pm}$, we may and will assume that $h(\e_{kmn}) = \delta_{kmn}$. By Proposition \ref{prop-order-e}, this implies that  the sets $\Lambda(A^{\pm})$ and $\Lambda(B^{\pm})$  are equivalent in the sense of Definition \ref{def:seteq:char}.

\begin{proof}[Proof of Lemma \ref{lem:sing-sep}]
We only prove the lemma for $l_i^-$; for $l_i^+$, the proof is analogous.
We have two topologically different cases: $l_i^-$ is the only separatrix of $E_i$ that intersects $C^-$, or both unstable separatrices of $E_i$ intersect $C^-$. We start with the second case.

Case 1. Consider a saddle $E$ whose two unstable separatrices wind towards $\gamma$. Let $L_1,L_3$ be these separatrices of $E$ (so $L_1=l_i^-$ and $L_3=l_j^-$ for some $i,j$; $E=E_i=E_j$), and let $L_2,L_4$ be two its stable separatrices. Let $R_2 = \alpha(L_2)$ and $R_4 = \alpha(L_4)$ be the $\alpha$-limit sets  of these separatrices; both of them are either hyperbolic singular points, or hyperbolic limit cycles. Note that  $R_2\neq R_4$ because these sets are on two different sides with respect to $\gamma\cup L_1\cup L_3$, see Fig. \ref{fig:wind-1}.

\begin{figure}
\subcaptionbox{\label{fig:wind-1}}{\includegraphics[width=0.35\textwidth]{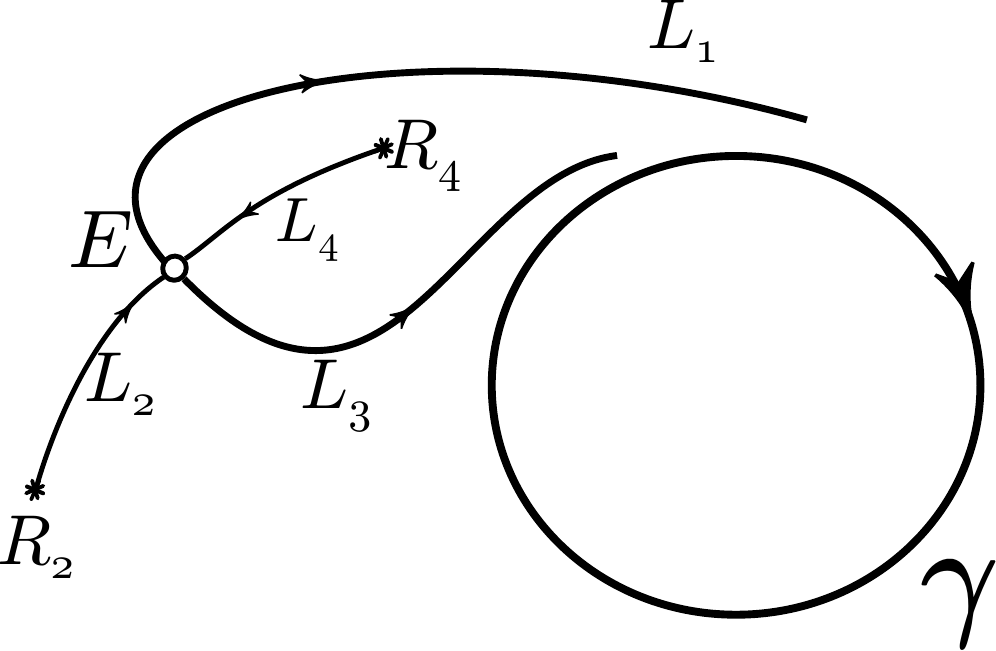}}
\hfil
\subcaptionbox{\label{fig:wind-2}}{\includegraphics[width=0.3\textwidth]{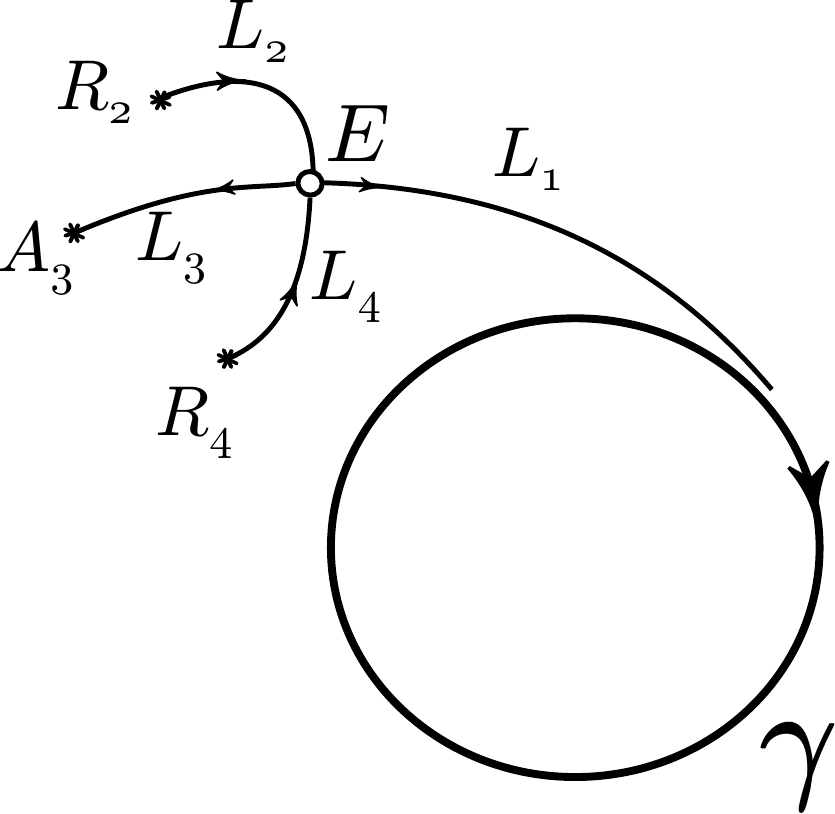}}
\caption{Two cases for Lemma \ref{lem:sing-sep}}\label{fig:wind-12}
\end{figure}

Let $E^\e, (L_i^\e,E^\e)$, and $R_i^\e$ be continuous families of singular points, germs of separatrices, and repellors of $v_{\e}$ such that $E^0=E, L_i^0=L_i, R_i^0=R_i$. Let $\tilde E, \tilde L_i, \tilde R_i$ be the images of $E, L_i$, and $R_i$ under $H_{0}$. Let $\tilde E^\d, (\tilde L_i^\d, \tilde E^\d), \tilde R_i^\d$ be continuous families of singular points, germs of separatrices, and repellors of $w_{\d}$.

The definition of sing-equivalence implies that $H_{\e}(E^\e) = \tilde E^{h(\e)}$ and $H_{\e}(R_i^\e) = \tilde R_i^{h(\e)}$. We should prove that the analogous statement holds for separatrices: $H_{\e}(L_i^\e)$ coincides with $\tilde L_i^{h(\e)}$. Clearly, $H_{\e}(L_1^{\e})$ is an unstable separatrix of $\tilde E^{h(\e)}$; two possibilities are $H_{\e}(L_1^\e) = \tilde L_1^{h(\e)}$ and $H_{\e}(L_1^\e) = \tilde L_3^{h(\e)}$. Our goal is to prove that the second case is impossible. Since $H_{\e}$ preserves orientation, it preserves a cyclical order of separatrices at $E^{\e}$, so in this case, we must have $H_{\e}(L_2^\e) = \tilde L_4^{h(\e)}$: in comparison with $H_0$, the map $H_{\e}$ rotates separatrices of $E^\e$.

Since $H_{\e}$ is a homeomorphism that conjugates $v_\e$ to $w_{h(\e)}$, it respects $\alpha$-, $\omega$- limit sets. So the $\alpha$-limit set of $H_{\e}(L_2^\e)$ must be $H_{\e}(R_2^{\e}) = \tilde R_2^{h(\e)}$. But the $\alpha$-limit set of $\tilde L_4^{h(\e)}$ is $\tilde R_4^{h(\e)}$. The contradiction shows that $H_{\e}(L_2^\e) \neq \tilde L_4^{h(\e)}$. Thus $H_{\e}(L_i^\e)=\tilde L_i^{h(\e)}$ for all $i$, which proves the lemma in Case 1.

Case 2. Suppose that  $E$ has only one unstable separatrix $L_1$ that winds towards $\gamma$, and other its separatrices $L_2,L_3,L_4$ have hyperbolic $\alpha$- and $\omega$-limit sets (see Fig. \ref{fig:wind-2}).  Then  similar arguments as in Case 1 apply to $L_3$. Namely, if the $\omega$-limit set of $L_3$ is $A_3$, then the separatrix $H_{\e}(L_3^{\e})$ must have the $\omega$-limit set $\tilde A_3^{h(\e)}$, while $\tilde L_1^{h(\e)}$ has its $\omega$-limit set inside $\gamma$. So $H_{\e}(L_3^\e)\neq \tilde L_1^{h(\e)}$, thus $H_{\e}(L_i^\e)=\tilde L_i^{h(\e)}$ for all $i$. This proves the lemma in Case 2.


\end{proof}

\subsection{Theorem \ref{thm:class} part 2} \label{subset:pafam}
Let $V=\{v_{\e}\}$ and $W=\{w_{\delta}\}$ be two families satisfying assumptions of Theorem \ref{thm:class}.
Let $\hat H$ be an orbital topological equivalence of $v_0$ and $w_0$.

Let $\gamma$ be a parabolic cycle of $v_0$; let $C^{+}$ and $C^-$ be its transversal loops. We assume that $C^-$ is outside $\gamma$ and $C^{+}$ is inside it.
Let $U$ be the open annulus bounded by $C^{-}$ and $C^{+}$. Let $D^{\pm}$ be the disc on the sphere disjoint from the interior of $U$ and bounded by $C^{\pm}$, so that $S^2=D^- \cup U \cup D^{+}$.

Let $\tilde \gamma, \tilde C^{\pm}, \tilde U, \tilde D^{\pm}$ be analogous sets for the family $W$. We may and will modify $\hat H$ so that $\hat H(C^\pm) = \tilde C^\pm$.
Recall that the choice of $\tilde C^{\pm}$ implies that the trajectories starting on  $\tilde C^-$ wind towards $\tilde \gamma$, and the trajectories starting on $\tilde C^+$ are repelled from $\tilde \gamma$. We assume that $\gamma, \tilde \gamma$ are oriented clockwise by time orientation.

We will use the notation of Section \ref{subsec: conn} for the family $V$: $l_j^{\pm}(\e)$ are separatrices of hyperbolic saddles $E_j(\e)$, $I_{j}(\e)$ of $v_{\e}$ that cross $C^{\pm}$. Let  $\tilde l_j^{\pm}(\delta), \tilde E_j(\delta), \tilde I_{j}(\delta)$ be analogous objects for the family $W$ such that $\tilde l_j^{\pm} = \hat H(l_j^{\pm})$, $\tilde E_j=\hat H(E_j), \tilde I_{j}=\hat H(I_j)$.

As in Section \ref{subsec: conn}, $a_{j}^{\pm}(\e)$ are intersection points of $l_{j}^{\pm}(\e)$ with $C^{\pm}$ in $\varphi^{\pm}_{\e}$-chart, and $\Delta_{\e}\colon C^- \to C^+$ is the Poincaré map along $v_{\eps}$. Let $\tilde a_{j}^{\pm}(\delta)$ and $\tilde \Delta_{\delta}$ be  the corresponding objects for $\{w_{\delta}\}$.

\subsection{Construction of the homeomorphism of bases $h\colon (\bbR,0) \to (\bbR,0)$}

 Assume that for families $V=\{v_{\e}\}$ and $W=\{w_{\delta}\}$, the parabolic cycle disappears for $\e>0$, $\delta>0$; otherwise we reverse the parameter.

 The equivalence of the characteristic sets for $V$ and $W$ (see Definition \ref{def:seteq:char}) implies that the numbers $\tau_{km}$ are ordered on $[0,1)$ in the same way as $\{\lambda_{km} +\alpha\}$ for some $\alpha$. Recall that $\lambda_{km}$ are well-defined modulo an additive constant that depends on the choice of coordinates on coordinate circles. Let us add a shift by $\alpha$ to the coordinate on $S^1_+$; this will add $\alpha$ to all numbers $\lambda_{km}$.
 Finally, we may and will assume that the numbers $\tau_{km}$ and $\lambda_{km}$ are ordered in the same way.

   Let $\{\e_{kmn}\}$ be the sequence of the bifurcation parameter values defined in Section \ref{subsec: conn} for the family $V$, and $\{\delta_{kmn}\}$ be the analogous sequence for the family $W$.
Proposition \ref{prop-order-e} implies that the order of numbers $\e_{kmn}$ and $\delta_{kmn}$, $n$ fixed, is the same. This implies that the sets $\{\e_{kmn}\}$ and $\{\delta_{kmn}\}$ may be identified by some homeomorphism $h\colon (\bbR,0) \to (\bbR, 0)$ with $h(\e_{kmn}) = \delta_{kmn}$.

In more detail, the homeomorphism $h$ is defined in the following way. We put $h(0)=0$ and $h|_{\eps<0} =id$, and for $\eps>0$, we take any homeomorphism that satisfies $h(\eps_{kmn}) = \delta_{kmn}$.
 We will need the following lemma.
 \begin{lem}
 \label{lem:order}
 In assumptions of Theorem \ref{thm:class} part 2, for a homeomorphism $h\colon (\bbR, 0) \to (\bbR, 0)$ constructed above and for any small positive $\e>0$, the points $a_{j}^{+}(\e)$ and $\Delta_{\e}(a_j^-(\e))$ are ordered along $C^+$ in the same way as the corresponding points $\tilde a_{j}^{+}(\delta)$ and $\tilde \Delta_{\delta}(\tilde a_j^-(\delta))$ on $\tilde C^+$ for $w_{\delta}$, where $\delta = h(\e)$.
 \end{lem}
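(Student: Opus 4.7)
The plan is to reduce the lemma to a purely combinatorial comparison of cyclic orders. By \autoref{prop:pmap}, in the canonical coordinate $\ph_\e^+$ on $C^+$ we have $\Delta_\e(a_m^-(\e)) \equiv a_m^-(\e) - \tau(\e) \pmod{\zz}$, and an analogous formula holds on $\tilde C^+$ with $\tilde\tau$. Thus the configuration of $K+M$ labeled points on $C^+$ is the overlay of the fixed set $\{a_k^+(\e)\}_{k=1}^K$ with the rigidly rotated copy $\{a_m^-(\e) + \theta(\e)\}_{m=1}^M$ of $\{a_m^-(\e)\}_{m=1}^M$, where $\theta(\e) := -\tau(\e)\bmod 1 \in S^1$. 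Coincidences in this configuration occur exactly at $\e = \e_{kmn}$, where the moving point with label $m$ crosses the fixed point with label $k$; between consecutive bifurcation values the labeled cyclic order is locally constant in $\e$.

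The central observation is that for $\theta$ inside any sector of $S^1$ bounded by two neighboring points of $\{\tau_{km}\}$, the labeled cyclic order of $\{a_k^+\} \cup \{a_m^- + \theta\}$ on $S^1$ is determined by three pieces of purely combinatorial data: (i) the cyclic order of $\{a_k^+\}$ (with their labels) on $S^1_+$; (ii) the cyclic order of $\{a_m^-\}$ (with their labels) on $S^1_-$; and (iii) which of the $KM$ sectors of $S^1$ cut out by $\{\tau_{km}\}$ contains $\theta$. Indeed, the position of $a_m^- + \theta$ relative to $a_k^+$ on $S^1_+$ is controlled precisely by whether $\theta$ has passed $\tau_{km}$. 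First I would shrink $\e_0 > 0$ so that on $(0,\e_0]$ all three cyclic orders coincide with those at $\e = 0$; this is available by continuity and non-synchronization, and symmetrically for $W$.

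Each of the three data matches between $V$ and $W$. Items (i) and (ii) match because the orbital topological equivalence $\hat H$ identifies labeled separatrices and preserves orientations of $C^\pm$. Item (iii), the cyclic order of $\{\tau_{km}\}$ versus $\{\lambda_{km}\}$, matches by the assumed equivalence of characteristic sets, once the shift $\alpha$ has been absorbed into the coordinate on $S^1_+$. It remains to check that $\theta(\e)$ and $\tilde\theta(h(\e))$ lie in corresponding sectors; this follows from the defining property $h(\e_{kmn}) = \delta_{kmn}$ together with the fact that $\tau, \tilde\tau$ both tend monotonically to $+\infty$ as the parameter decreases to $0$, so $\theta$ and $\tilde\theta$ wind around $S^1$ in the same direction and cross their respective markers in the same cyclic order. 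Combining the three matches yields the equality of labeled cyclic orders on $C^+$ and $\tilde C^+$. The main technical point is verifying that the labeled cyclic order on $C^+$ truly depends on $\theta$ only through the sector it occupies; this is immediate once one tracks the single transposition that happens at each crossing $\theta = \tau_{km}$.
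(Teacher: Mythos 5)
Your proposal follows essentially the same route as the paper: both rest on Proposition~\ref{prop:pmap}, the monotonicity of $\tau(\e)$, the matching cyclic orders of $\{\tau_{km}\}$ and $\{\lambda_{km}\}$, and the defining property $h(\e_{kmn})=\delta_{kmn}$. The difference is presentational: the paper directly computes, for each triple $(k_1,k_2,m)$, the set of $\e$ for which $\Delta_\e(a_m^-(\e))\in(a_{k_1}^+(\e),a_{k_2}^+(\e))$ holds (explicitly as a union of intervals $(\e_{k_1mn},\e_{k_2mn})$ or $(\e_{k_1mn},\e_{k_2m(n-1)})$ depending on whether $0\in(\tau_{k_1m},\tau_{k_2m})$), whereas you factor out a combinatorial lemma about overlays of two labeled finite sets under a rigid rotation $\theta$.

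One point deserves care: your justification of the central observation---``track the single transposition that happens at each crossing $\theta=\tau_{km}$''---only shows that, for a \emph{fixed} configuration $(A^+,A^-)$, the labeled cyclic order depends on $\theta$ only through its sector. It does not by itself establish that two \emph{different} configurations with matching data (i), (ii), (iii) produce the same cyclic order; for that you still need a base case, or better, a direct argument. The direct argument is not hard: for any ordered triple of the $K+M$ points, its orientation is combinatorial in (i) if all three are fixed, in (ii) if all three are moving (rotation preserves cyclic order), and for a mixed triple $(a_{k_1}^+,a_{k_2}^+,a_m^-+\theta)$ or $(a_{m_1}^-+\theta,a_{m_2}^-+\theta,a_k^+)$ the orientation reduces to whether $\theta$ lies in an arc bounded by two $\tau_{km}$'s, which is read off from (iii). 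With this verification in place, your combinatorial reduction is clean and the rest of your argument (matching sectors via the wind-count of $\theta$ and the construction of $h$) completes the proof.

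Incidentally, the paper's own proof asserts that establishing (a) the coincidence locus and (b) the arc inclusions for all $(k_1,k_2,m)$ ``will finish the proof''; this step implicitly relies on the same combinatorial fact you isolate, so making it explicit, as you do, is arguably a small improvement in transparency.
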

  \begin{proof}
Note that if for small $\e$, two points $a_{m}^{+}(\e)$ and $\Delta_{\e}(a_k^-(\e))$ coincide, this implies $\e=\e_{kmn}$ for some $n$ (cf. Sec. \ref{subsec: conn}). Due to the construction of $h$, $h(\e) = \delta_{kmn}$, thus two corresponding points $\tilde a_{m}^{+}(\delta)$ and $\tilde \Delta_{\delta}(\tilde a_k^-(\delta))$ coincide.

We must also prove that for all $k_1,k_2,m$, the condition on $\e$
  \begin{equation}
  \label{eq:order-a}
   \Delta_{\e}(a_m^-(\e)) \in (a_{k_1}^{+}(\e), a_{k_2}^{+}(\e))
  \end{equation}
is equivalent to the condition on $\delta$
    \begin{equation}
  \label{eq:order-a'}
   \tilde \Delta_{\delta}(\tilde a_m^-(\delta)) \in (\tilde a_{k_1}^{+}(\delta), \tilde a_{k_2}^{+}(\delta))
  \end{equation}
  for small $\e$ and $\delta=h(\e)$ (on the right-hand side, we have oriented arcs of the oriented coordinate circles). This will finish the proof of the lemma.

Proposition \ref{prop:pmap} implies that \eqref{eq:order-a} is equivalent to  $(a_m^-(\e) -\tau(\eps) \mod 1) \in (a_{k_1}^{+}(\e), a_{k_2}^{+}(\e))$, i.e.
$$(-\tau(\eps) \mod 1)  \in (\tau_{k_1m}(\e), \tau_{k_2m}(\e))\subset \bbR/\bbZ.$$
Recall that $\tau(\eps)$ is monotonic for small $\eps$ with $\tau'(\eps) \to -\infty$ as $\e \to 0$, and the derivatives of $\tau_{km} (\e)$ are bounded. So the values of $\eps$ that satisfy the last inclusion are between the solutions $\e_{k_1mn}$, $\e_{k_2mn}$ of the connection equation \eqref{eqn:conn}.

In more detail, if the arc  $(\tau_{k_1m}(\e), \tau_{k_2m}(\e))$ of the circle does not contain zero (or, equivalently, the segment $(\tau_{k_1m}, \tau_{k_2m})$  does not contain zero), \eqref{eq:order-a} is equivalent to $ -\tau(\eps)+n \in (\tau_{k_1m}(\e), \tau_{k_2m}(\e))$ for some $n$, and the solutions are $\e \in (\e_{k_1mn}, \e_{k_2mn})$ for some $n$. If the  segment  $(\tau_{k_1m}, \tau_{k_2m})$  contains zero, \eqref{eq:order-a} is equivalent to $ -\tau(\eps)+n \in (\tau_{k_1m}(\e), \tau_{k_2m}(\e)+1)$ for some $n$, i.e. to $\e \in \bigcup_n (\e_{k_1mn}, \e_{k_2m({n-1})})$.

Finally, for small $\e$, the condition \eqref{eq:order-a} is equivalent to
\begin{align*}
&\e \in \bigcup_{n}(\e_{k_1mn}, \e_{k_2mn}) \text{ if  } 0 \in (\tau_{k_1m}, \tau_{k_2m})\\
&\e \in \bigcup_n (\e_{k_1mn}, \e_{k_2m({n-1})}) \text{ if  }0 \notin (\tau_{k_1m}, \tau_{k_2m})
\end{align*}

   Similarly,  the condition \eqref{eq:order-a'}
  is equivalent to
  \begin{align*}
&\delta \in \bigcup_{n}(\delta_{k_1mn}, \delta_{k_2mn}) \text{ if  } 0 \in (\lambda_{k_1m}, \lambda_{k_2m})\\
&\delta \in \bigcup_n (\delta_{k_1mn}, \delta_{k_2m({n-1})}) \text{ if  }0 \notin (\lambda_{k_1m}, \lambda_{k_2m})
\end{align*}

Since $\tau_{km}$ and $\lambda_{km}$ are ordered in the same way, the construction of $h$ above implies that the conditions  \eqref{eq:order-a} and \eqref{eq:order-a'} are equivalent for $\delta=h(\e)$.

  \end{proof}
\subsection{Construction of equivalence of $V,W$ in the neighborhoods of parabolic cycles}
 In this section, we construct a required homeomorphism conjugating $v_{\eps}$ to $w_{h(\eps)}$ in the neighborhoods of the parabolic cycles $\gamma$, $\tilde \gamma$. We will extend it to the whole sphere in the next section.

 \begin{lem}
 \label{lem-H-mid}
  In assumptions of Theorem \ref{thm:class} part 2, for $h$ constructed above and for each small $\e$,
   there exists a homeomorphism $H_\e^{mid} \colon U\to \tilde U$   such that
   \begin{equation}
   \label{eq:Hmid}
    H_\e^{mid} (U \cap l_j^{-}(\e)) = \tilde U \cap \tilde l_j^{-}(\delta) \quad \text{ and } \quad H_\e^{mid}(U \cap l_j^{+}(\e)) = \tilde U \cap \tilde l_j^{+}(\delta),
   \end{equation}
 where $\delta = h(\e)$.
 \end{lem}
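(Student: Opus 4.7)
The plan is a flow-box construction in $U$. For small $\e>0$---the substantive case, when the parabolic cycle $\gamma$ has disappeared---every orbit of $v_\e$ inside $U$ crosses $C^-$ exactly once and exits through $C^+$ after a positive transit time $T_\e(y)$, so $U$ is a trivial flow box over $C^-$ with fibre $[0,T_\e(y)]$; the same holds for $w_\delta$ in $\tilde U$ with transit time $\tilde T_\delta$. I will prescribe $H_\e^{mid}$ on $C^-$ and transport it along the flow; the cases $\e\le 0$ require a mild variant outlined at the end.

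Inside $U$ the union of separatrices of $v_\e$ is carried by the labelled finite set
\[
Q(\e) \ := \ \{a_j^-(\e)\}_j \ \cup \ \{\Delta_\e^{-1}(a_k^+(\e))\}_k \ \subset \ C^-,
\]
whose first group emits the orbits forming $l_j^-(\e)\cap U$ and whose second group emits the orbits forming $l_k^+(\e)\cap U$. Defining $\tilde Q(\delta)\subset \tilde C^-$ analogously, I claim there is an orientation-preserving homeomorphism $\eta_\e\colon C^- \to \tilde C^-$ with $\eta_\e(a_j^-(\e)) = \tilde a_j^-(\delta)$ and $\eta_\e(\Delta_\e^{-1}(a_k^+(\e))) = \tilde \Delta_\delta^{-1}(\tilde a_k^+(\delta))$ for all $j,k$, where $\delta = h(\e)$. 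The existence of such $\eta_\e$ reduces to a coincidence of the cyclic orders of $Q(\e)$ and $\tilde Q(\delta)$; pushing both sets forward by the orientation-preserving rotations $\Delta_\e$ and $\tilde \Delta_\delta$, this coincidence is precisely the content of Lemma \ref{lem:order}. At the saddle-connection parameters $\e=\e_{kmn}$ two points of $Q(\e)$ collide, but the definition of $h$ forces the corresponding collision in $\tilde Q(h(\e))$ at $\delta_{kmn}$, so the prescription for $\eta_\e$ remains consistent as $\e$ varies over a small interval.

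With $\eta_\e$ in hand, express each $x\in U$ as $x=g^s_{v_\e}(y)$ for a unique $y\in C^-$ and $s\in[0,T_\e(y)]$, and set
\[
H_\e^{mid}(x) \ := \ g^{\,s\,\tilde T_\delta(\eta_\e(y))/T_\e(y)}_{w_\delta}\bigl(\eta_\e(y)\bigr).
\]
Since $T_\e$ and $\tilde T_\delta$ are continuous and bounded below by a positive constant on the compact circles, this is a well-defined homeomorphism $U\to\tilde U$ taking orbits of $v_\e$ to orbits of $w_\delta$; it restricts to $\eta_\e$ on $C^-$ and to $\tilde\Delta_\delta\circ\eta_\e\circ\Delta_\e^{-1}$ on $C^+$, and the required identities \eqref{eq:Hmid} then follow directly from the definition of $\eta_\e$.

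The main obstacle is the cyclic-order matching of step two. It is not automatic: $\Delta_\e$ is a rotation by the diverging angle $-\tau(\e)$, so the images of the $a_j^-(\e)$ on $C^+$ sweep past the $a_k^+(\e)$ infinitely often as $\e\to 0$, and the cyclic order of $Q(\e)$ on $C^-$ keeps rearranging. Lemma \ref{lem:order} is exactly the statement that, once $h$ is adjusted so that $h(\e_{kmn})=\delta_{kmn}$, these rearrangements happen synchronously on the two circles, which is what makes the boundary prescription consistent. For $\e=0$ one takes $H_0^{mid}:=\hat H|_U$, which by construction already sends each $l_j^\pm$ to $\tilde l_j^\pm$; for $\e<0$ the cycle $\gamma$ splits into two hyperbolic cycles that divide $U$ into a stable and an unstable sub-annulus, and the same boundary-prescription-plus-flow-transport recipe applies in each of them, with the winding separatrices' intersections with $C^\pm$ supplying the boundary data.
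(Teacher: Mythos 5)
Your proposal is correct and follows essentially the same route as the paper: for $\e>0$ the paper also reduces the claim to Lemma~\ref{lem:order} via the flow-box structure of $v_\e|_U$ (its conjugacy to a radial field in an annulus), takes $\hat H|_U$ at $\e=0$, and handles $\e<0$ by a conjugacy of the two hyperbolic-cycle annuli — you simply make the boundary homeomorphism $\eta_\e$ and the transit-time reparametrization explicit. One small imprecision: for $\e<0$ the two hyperbolic cycles cut $U$ into three sub-annuli (an additional trivial middle one with no separatrices), not two, though this does not affect the argument.
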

\begin{proof}
 For $\e>0$, the parabolic cycle disappears, and both vector fields $v_{\eps}|_{U}$, $w_{\delta}|_{\tilde U}$ are conjugate to the radial vector field in the annulus $1<r<2$. So the statement follows from \autoref{lem:order}.

 For $\e=0$, we may take $H_\e^{mid}=\hat H|_{U}$.

 For $\e<0$, the parabolic cycle splits into two. Both vector fields $v_{\e}|_{U}$ and $w_{\delta}|_{\tilde U}$ have two hyperbolic cycles in $U, \tilde U$, the outer one is attracting and the inner one is repelling; both oriented clockwise by time orientation. We conclude that $v_{\e}|_{U}$ and $w_{\delta}|_{\tilde U}$ are conjugate. It is easy to modify the conjugacy so that it takes characteristic sets on $C^{\pm}$ to characteristic sets on $\tilde C^{\pm}$, i.e.  the conditions \eqref{eq:Hmid} are satisfied.
\end{proof}

\subsection{Proof of Theorem \ref{thm:class} part 2  modulo \autoref{lem:flex} on flexibility of authomorphisms}

Recall that $v_{0}$ is topologically conjugate to $w_0$, and the conjugacy $\hat H$ takes $l^{\pm}_j$ to $\tilde l^{\pm}_j$.

Now, $v_{0}|_{D^\pm}$ is structurally stable, so $v_{\e}|_{D^\pm}$ is conjugate to it. Similarly, $w_{0}|_{\tilde D^{\pm}}$ is conjugate to $w_{h(\e)}|_{\tilde D^\pm}$. Thus there exist two homeomorphisms $H_{\e}^+ \colon D^+ \to \tilde D^+$ and $H_{\e}^-\colon D^- \to \tilde D^-$ that conjugate $v_{\e}|_{D^{\pm}}$ to $w_{\delta}|_{\tilde D^{\pm}}$ and take $l^{\pm}_j (\e)$ to $\tilde l^{\pm}_j (\delta)$, where $\delta=h(\e)$.

Now let us construct $H_{\e}$.

For $\e=0$, we simply take $H_{\e}=\hat H$.
Our goal for $\e \neq 0$ will be to agree  $H_{\e}^{\pm}$  with $H_{\e}^{mid}$ constructed in \autoref{lem-H-mid}.

We will use the following lemma. Let $\Sep v$ be the union of all separatrices of $v$, $\Per v$ be the union of all limit cycles of $v$, $\Sing v$ be the union of all singular points of $v$.

\begin{lem}[On flexibility of automorphisms]
\label{lem:flex}
   Let $v$ be a smooth Morse-Smale vector field in a closed disc $D$ with smooth boundary. Let $v$ be transversal to $\partial D$.

   Then any orientation-preserving homeomorphism $g\colon \partial D \to \partial D$ such that  $g$ is identical on $\partial D \cap \Sep v$ extends to an orbital topological automorphism $G\colon D\to D$ of the vector field $v$, and $G|_{\Sing v \cup \Per v} =id$.
\end{lem}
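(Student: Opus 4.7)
The plan is to decompose $D$ along the skeleton $S := \partial D \cup \Sep v \cup \Sing v \cup \Per v$ into canonical Morse--Smale cells, to define $G$ on $S$ in the obvious way, and then to extend $G$ cell by cell using the flow-box structure of $v$ inside each cell.

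First, on $S$ I declare $G|_{\partial D} := g$ and $G|_{\Sep v \cup \Sing v \cup \Per v} := \mathrm{id}$. This prescription is consistent: every separatrix has one endpoint at a saddle, and its other endpoint lies either in $\Sing v \cup \Per v$ (where $G = \mathrm{id}$) or on $\partial D \cap \Sep v$ (where $g = \mathrm{id}$ by the hypothesis of the lemma), so the two definitions of $G$ agree on the overlap.

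Second, $D \setminus S$ is a finite disjoint union of open canonical regions $R_1,\dots,R_N$ of $v$. Inside each $R_i$ all orbits share common $\alpha$- and $\omega$-limit sets, and $\partial R_i$ is a finite cycle of arcs drawn from $\partial D$, from separatrices, and from limit cycles, glued together at singular points. I pick a cross-section $\tau_i$ meeting every orbit of $v|_{R_i}$ exactly once; depending on the type of $R_i$ this can be taken as an arc of $\partial D$ transversal to $v$, an arc of a limit cycle, or a short transversal arc inside $R_i$. Flow coordinates then identify $\overline{R_i}$ with $\{(s,t): s\in\tau_i,\ t_-(s)\le t\le t_+(s)\}$, where $t_\pm(s)\in\mathbb{R}\cup\{\pm\infty\}$ according as the corresponding end of the orbit terminates on $\partial D$ or on a singular point / limit cycle. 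I then set $G(s,t) := (\varphi_i(s),\psi_i(s,t))$, where $\varphi_i\colon\tau_i\to\tau_i$ is an orientation-preserving homeomorphism matching the already-prescribed values of $G$ at the two endpoints of $\tau_i$, and $\psi_i(s,\cdot)$ is an orientation-preserving homeomorphism of the $t$-interval matching the prescribed values at $t=t_\pm(s)$. Gluing the cell extensions with the skeleton values produces a map $G\colon D\to D$ that sends orbits of $v$ to orbits of $v$ preserving time orientation and is the identity on $\Sing v\cup\Per v$.

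The main technical obstacle is continuity of $G$ on the parts of $\partial R_i$ on which $t_\pm(s)=\pm\infty$, namely on the hyperbolic singular points and limit cycles to which the orbits of $R_i$ accumulate. There the family $\psi_i(s,\cdot)$ must be chosen with enough regularity in $s$ that, as $s$ approaches the side of $\tau_i$ corresponding to such a limit set, the image of the orbit through $s$ converges uniformly to the identity on the limit set. This is handled by using local linearization of $v$ near hyperbolic equilibria and hyperbolic cycles, which provides explicit model coordinates in a neighbourhood of the limit set; one takes $\psi_i$ to be the identity in these coordinates on a small neighbourhood of the limit set and interpolates smoothly with the boundary data on the rest of $R_i$. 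This is the standard device used in the proof of structural stability of Morse--Smale fields on surfaces, and it completes the construction of $G$.
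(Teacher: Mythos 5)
Your proposal follows essentially the same route as the paper: decompose $D$ along the separatrix skeleton into canonical regions (the paper first extends $v$ to a Morse--Smale field on $S^2$ by adding a source outside $D$ so that Proposition \ref{prop:canonreg} applies), declare $G$ on the skeleton (identity on $\Sep v\cup\Sing v\cup\Per v$, equal to $g$ on $\partial D$), then extend region by region in flow-box coordinates, with the only delicate point being continuity of $G$ at the $\alpha$- and $\omega$-limit sets of each region. Two small remarks: an arc of a limit cycle cannot serve as the cross-section $\tau_i$ since it is tangent to $v$ rather than transversal, so you must take a genuine transversal arc there; and where you secure continuity near a hyperbolic cycle by passing to local linearizing coordinates and interpolating, the paper instead chooses the flow-box chart $\psi$ so that the diameters of the transversals $\psi(\{x\}\times(0,1))$ tend to zero as $x\to\pm\infty$ and then takes $\hat G$ to preserve vertical segments, which gives continuity at $\alpha(R),\omega(R)$ without invoking Grobman--Hartman.
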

The proof of this lemma constitutes Section \ref{sec:flex} below. Here we finish the proof of Theorem \ref{thm:class} modulo Lemma \ref{lem:flex}.

In order to agree $H^{+}_{\e}$ to $H^{mid}_{\e}$ on $C^+$, we apply this lemma to the vector field $v_{\e}|_{D^{+}}$ and the homeomorphism $g = (H_{\e}^{+})^{-1} \circ H_\e^{mid}$  on $C^{+}$, and get an automorphism $G^+$ of $v_{\eps}|_{D^+}$. Now $H_{\e}^{+} \circ G^+$ is a sing-equivalence of $v_{\e}$ and $w_{\delta}$ that takes $D^{+}$ to $\tilde D^+$,  coincides with $H_\e^{mid}$ on $C^{+}$ and conjugates $v_{\e}$ to $w_{h(\e)}$ in $D^{+}$ and $\tilde D^{+}$. Similarly, we construct a homeomorphism $H_{\e}^- \circ G^-$ in $D^-$ that coincides with $H_\e^{mid}$ on $C^-$ and conjugates $v_{\e}$ to $w_{h(\e)}$. These two homeomorphisms $H_{\e}^\pm \circ G^\pm$ and $H^{mid}_\e$ glue into the sing-equivalence $H_\eps$ on the whole sphere, and the proof of Theorem \ref{thm:class} is complete.

\subsection{Flexibility of automorphisms}
\label{sec:flex}
This section is devoted to the proof of \autoref{lem:flex}.
We start with some general definitions.

\begin{defin}
Let $v$ be a Morse-Smale vector field.
The union of all singular points, separatrices and limit cycles of $v$ is called a \emph{separatrix skeleton} of $v$. The connected components of the complement to the separatrix skeleton are called \emph{canonical regions}.
\end{defin}

Clearly, each canonical region $R$ has a common $\alpha$- and $\omega$-limit set; we denote them by $\alpha(R)$ and $\omega(R)$.
The following statement is formulated in \cite{DLA}.
\begin{prop}
\label{prop:canonreg}
 Any canonical region $R$ of a $C^2$-smooth vector field on $S^2$ is parallel, i.e. $v|_{R}$ is topologically equivalent to one of the following:
\begin{itemize}
 \item strip flow: $\partial / \partial x$ in the strip $\Pi := \bbR\times (0,1)$;
 \item spiral flow: $\partial /\partial r$ in $\bbR^2\setminus 0$, where $(r,\phi)$ are polar coordinates;
 \item annular flow: $\partial /\partial \phi$ in $\bbR^2\setminus 0$.
\end{itemize}
\end{prop}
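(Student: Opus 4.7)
The plan is to build a global cross-section $\Sigma$ to $v|_R$ and then read off the three normal forms from the topology of $\Sigma$ together with the first-return map. First I would fix an orbit $\gamma_0 \subset R$, a point $p_0 \in \gamma_0$, and a small transversal arc $T_0$ to $v$ at $p_0$, and enlarge $T_0$ maximally within $R$ to a connected $1$-submanifold $\Sigma \subset R$ that is everywhere transversal to $v$. By connectedness, $\Sigma$ is either an open arc or a topological circle.

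The two key steps are then (i) showing that every orbit of $v|_R$ meets $\Sigma$, and (ii) identifying the possible types of the pair $(\Sigma,\Pi)$, where $\Pi$ is the first-return map when defined. For (i), the defining property of a canonical region is crucial: all orbits in $R$ share the same $\alpha$- and $\omega$-limit sets. Combining Poincar\'e--Bendixson with the flow-box theorem near $\alpha(R)$ and $\omega(R)$, one finds neighborhoods of $\gamma_0$ into which every other orbit of $R$ is forced in backward and forward time; flowing these neighborhoods then produces a transversal crossing of $\Sigma$ for each such orbit.

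With (i) in hand, (ii) splits cleanly. If $\Sigma \cong \bbR$ and no orbit returns to $\Sigma$, the map $(s,t)\mapsto g^t_v(s)\colon \Sigma\times\bbR \to R$ is a homeomorphism conjugating $v$ to $\partial_t$, yielding the strip flow. If $\Sigma \cong S^1$ and no orbit returns, the same construction, after identifying $S^1\times\bbR$ with $\bbR^2\setminus\{0\}$, gives the spiral flow. If some orbit in $R$ is periodic, the common-limit-sets property forces every orbit in $R$ to be periodic, so $R$ is swept out by closed curves and is conjugate to the annular flow. The remaining case, where $\Pi$ is defined but has a hyperbolic fixed point, is ruled out because any attracting or repelling closed orbit would belong to the separatrix skeleton and hence could not lie inside $R$.

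The hardest part will be step (i): rigorously showing that every orbit of $v|_R$, not only those in a neighborhood of $\gamma_0$, actually reaches $\Sigma$. This requires a careful interplay between the Morse--Smale hypothesis (hyperbolic critical elements, no saddle connections) and the classification of limit sets on $S^2$, guaranteeing that no orbit in $R$ remains globally isolated from the cross-section.
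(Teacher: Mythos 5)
The paper does not actually prove \autoref{prop:canonreg}: it cites it to \cite{DLA}, where it appears as (a variant of) the Markus--Neumann classification of parallel regions. So there is no ``paper proof'' to compare against in detail; what matters is whether your sketch would, if fleshed out, yield a correct argument. Your plan—build a maximal connected transversal $\Sigma$, show every orbit of $R$ meets it, and then read off the normal form from the pair $(\Sigma,\Pi)$—is indeed the classical route, and in broad strokes it is the right one.

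There is, however, a genuine gap in step (ii) beyond what you flag in step (i): your case split is not exhaustive. You treat (a) $\Sigma\cong\bbR$ with no returns, (b) $\Sigma\cong S^1$ with no returns, (c) some orbit in $R$ periodic, and (d) $\Pi$ with a \emph{hyperbolic fixed point}. You never address the case in which $\Pi$ is defined on part of $\Sigma$ but has \emph{no} fixed point at all—for instance, orbits spiraling from one limit cycle of $v$ to another cross any transversal arc infinitely often, and the induced return map is fixed-point free. The proposition is still true in that situation, but only because for a \emph{maximal} transversal the return map cannot be fixed-point free: whenever $p$ and $\Pi(p)$ are distinct points of $\Sigma$, the orbit segment from $p$ to $\Pi(p)$ together with the subarc of $\Sigma$ bounds a topological disc on $S^2$, which by the index/Poincar\'e--Bendixson argument must contain a singular point or a periodic orbit, hence a point of the separatrix skeleton; alternatively one shows that such a transversal arc can be extended to a closed transversal, contradicting maximality. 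Without ruling this case out, you cannot conclude that $(s,t)\mapsto g^t_v(s)$ is injective, so the map in case (a) need not be a homeomorphism. A second, smaller issue: you assert that a maximal connected transversal is automatically homeomorphic to $\bbR$ or to $S^1$; this requires showing that the extension process terminates cleanly (ends either close up or exit $R$), which is not immediate. Finally, note that the statement is for arbitrary $C^2$ fields on $S^2$, while your last paragraph leans on the Morse--Smale hypothesis; that is fine for the application in the paper, but it quietly narrows the statement.

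Given that \cite{DLA} proves the result by an orbit-space argument (the orbit space of a canonical region is a Hausdorff $1$-manifold, hence $\bbR$ or $S^1$, and one then distinguishes the three types), you might find it cleaner to follow that line; it sidesteps the delicate construction of a global section and makes the case exhaustiveness transparent.
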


The last case does not appear for Morse-Smale vector fields. The possible shapes for canonical regions of Morse-Smale fields are shown on Fig. \ref{fig:canonReg}, see also \cite[Sec. 1.2]{N}. In particular, each strip canonical region is bounded by four or three separatrices.

\begin{figure}
\centering
\subcaptionbox{\label{fig:canonReg-a}}{\includegraphics[scale=0.75]{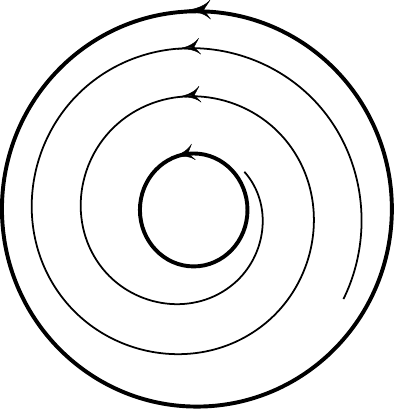}}
\hfil
\subcaptionbox{}{\includegraphics[scale=0.75]{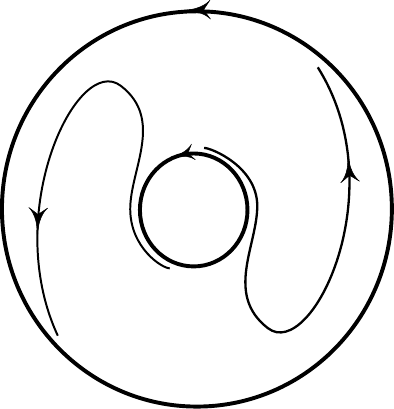}}
\hfil
\subcaptionbox{}{\includegraphics[scale=0.75]{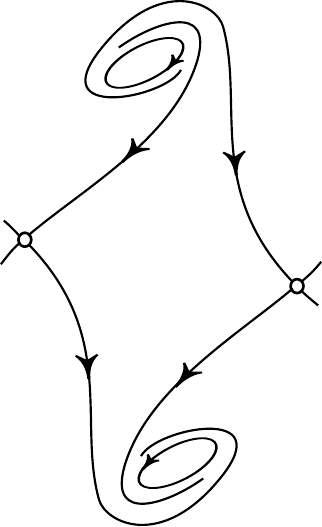}}
\hfil
\subcaptionbox{}{\includegraphics[scale=0.75]{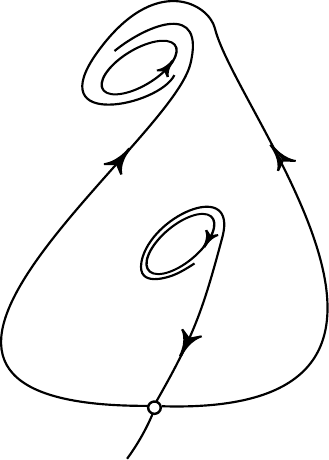}}
 \caption{Possible shapes of strip and spiral canonical regions for Morse-Smale vector fields. Each limit cycle may be replaced by a singular point}\label{fig:canonReg}
\end{figure}

Proposition \ref{prop:canonreg} provides us by continuous charts in $R$ that conjugate $v$ to standard vector fields. This will enable us to prove the following propositions.
\begin{prop}
\label{prop:extension}
 Let $R$ be a \textbf{strip} canonical region of a Morse-Smale vector field $v$ on the sphere. Let $T$ be a transversal to $v$ that intersects all trajectories of $v|_{R}$. Let endpoints of $T$ be located on separatrices that bound $R$.

 Then given a homeomorphism $g\colon T \to T$ of a transversal that fixes endpoints of $T$, we may extend it to an automorphism $G\colon \overline R \to \overline R$ of $v$ such that $G|_{\partial R}=id$.
\end{prop}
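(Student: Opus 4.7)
My plan is to transfer the problem to the standard strip model of Proposition \ref{prop:canonreg}, define $G$ as a ``vertical shear'' in that model, and extend by the identity on $\partial R$. By Proposition \ref{prop:canonreg}, fix a homeomorphism $\psi\colon R\to \Pi=\bbR\times(0,1)$ conjugating $v|_R$ to $\partial/\partial x$. Since $T$ meets every orbit of $v|_R$ exactly once, $\psi(T)$ meets every horizontal line exactly once, so it is the graph of a continuous function $y\mapsto f(y)$. Post-composing $\psi$ with the automorphism $(x,y)\mapsto(x-f(y),y)$ of $(\Pi,\partial/\partial x)$, I may assume $\psi(T)=\{0\}\times(0,1)$, and $g$ becomes a homeomorphism $\tilde g\colon(0,1)\to(0,1)$. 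The endpoints of $T$ lie on the bounding separatrices of $R$, and $g$ fixes them, so $\tilde g$ extends continuously to $[0,1]\to[0,1]$ with $\tilde g(0)=0$ and $\tilde g(1)=1$.

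Now define $\tilde G(x,y)=(x,\tilde g(y))$ on $\Pi$; this is a homeomorphism preserving each horizontal orbit setwise. Put $G=\psi^{-1}\circ\tilde G\circ\psi$ on $R$ and $G|_{\partial R}=\mathrm{id}$. On $R$ the map $G$ is an orbital topological automorphism of $v$ that restricts to $g$ on $T$ (after the straightening of Step~1). What remains is to verify that this extension is continuous at every $q^*\in\partial R$.

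For continuity I use the structure of $\partial R$ for a Morse--Smale strip canonical region: $\partial R$ consists of two composite ``side'' arcs of separatrices, possibly passing through saddles, which correspond to $y\to 0$ and $y\to 1$ in the chart, joined at the two ``short ends'' of the strip corresponding to $x\to\pm\infty$ and identified with the common $\alpha$- and $\omega$-limit sets of orbits of $v|_R$. For a sequence $q_n\to q^*$ with $\psi(q_n)=(x_n,y_n)$: if $q^*$ sits on a side arc, then $y_n\to 0$ or $1$, and since $\tilde g(0)=0$, $\tilde g(1)=1$, the sequence $(x_n,\tilde g(y_n))$ has the same asymptotic behaviour in $\overline\Pi$ as $(x_n,y_n)$; if $q^*$ lies at a short end, then $x_n\to\pm\infty$ while $y_n$ and $\tilde g(y_n)$ both stay bounded in $(0,1)$. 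Applying the boundary extension of $\psi^{-1}$ in either case yields $G(q_n)\to q^*$, giving continuity and $G|_{\partial R}=\mathrm{id}$ simultaneously.

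The main obstacle is this last verification, and the structural point that makes it work is that $\tilde G$ leaves the $x$-coordinate untouched. This has two consequences: along the side separatrices the endpoint conditions $\tilde g(0)=0$, $\tilde g(1)=1$ alone force $G$ to agree with the identity there, and at a short end any ``angular'' winding phase that would be encoded in the $x$-coordinate (as can happen if a short end degenerates to a limit cycle in $\overline R$) is left intact by $\tilde G$, so the limit point of the sequence in $\overline R$ does not shift when $y_n$ is replaced by $\tilde g(y_n)$.
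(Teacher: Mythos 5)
Your overall strategy — pass to the strip model of \autoref{prop:canonreg}, straighten $T$ to a vertical segment, define $\tilde G(x,y)=(x,\tilde g(y))$, and transfer back — is the same as the paper's. The gap is in the last step, which you yourself flag as ``the main obstacle'': continuity of $G$ on $\partial R$. You invoke ``the boundary extension of $\psi^{-1}$'' and a heuristic about the ``angular winding phase'' being carried by $x$, but \autoref{prop:canonreg} only supplies \emph{some} homeomorphism $\psi\colon\Pi\to R$ conjugating the flows; it gives no control at all over the behaviour of $\psi$ or $\psi^{-1}$ near $\partial R$. Two specific properties of $\psi$ are silently used and neither is automatic: (a) that $\psi^{-1}$ extends continuously to the side arcs $\psi(\bbR\times\{0\})$ and $\psi(\bbR\times\{1\})$; and (b), when a short end of $R$ is a limit cycle $c$, that points with the same $x$ but different $y$ stay angularly close near $c$ — equivalently, that the diameters of the transversals $\psi(\{x\}\times(0,1))$ tend to $0$ as $x\to\pm\infty$. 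An arbitrary conjugating $\psi$ can fail (b) badly: compose any good chart with $(x,y)\mapsto(x+\phi(y),y)$ for an unbounded continuous $\phi$, and the vertical segments $\{x\}\times(0,1)$ are sent to sets whose images spread around the whole cycle $c$; then $\psi^{-1}(x_n,y_n)\to q^*\in c$ does \emph{not} imply $\psi^{-1}(x_n,\tilde g(y_n))\to q^*$, so your ``vertical shear'' $G$ is discontinuous on $c$. The sentence ``the limit point of the sequence in $\overline R$ does not shift when $y_n$ is replaced by $\tilde g(y_n)$'' is exactly what has to be proved, and it is false without first adjusting $\psi$.

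The paper's proof spends essentially all of its effort on fixing precisely this: it re-parametrizes each horizontal line so that $\psi$ is the natural time parametrization away from small neighbourhoods of $\alpha(R)$ and $\omega(R)$ (giving (a) and the property that $T\mapsto\{0\}\times(0,1)$), and then, near a limit-cycle end, further modifies $\psi^{-1}$ so that a chosen continuous family of short transversals to the cycle is taken to vertical segments of $\Pi$ — this forces (b). Only after these modifications is the conclusion ``$G$ preserves the transversals $\psi(\{x\}\times(0,1))$, whose diameters shrink, hence $G$ extends by the identity continuously'' available. So: same route, but you have asserted the boundary regularity of the chart rather than constructing it; to repair the proof, add the chart modifications (shift/rescale on horizontal lines; realignment to a transversal family near a limit-cycle end) before defining $\tilde G$. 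As a minor point, the shear that straightens $T$ should be applied to $\Pi$ \emph{before} $\psi$ (i.e.\ pre-compose $\psi$, or post-compose $\psi^{-1}$), and one should note that the shear function $f$ can be made bounded only after the reparametrization of the horizontal lines.
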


\begin{prop}
\label{prop:extension-2}
  Let $R$ be a \textbf{spiral} canonical region of a Morse-Smale vector field $v$ on the sphere. Let $T$ be a closed transversal to $v$ that intersects all trajectories of $v|_{R}$.

 Then given an orientation-preserving homeomorphism $g\colon T \to T$ of a transversal, we may extend it to an automorphism $G\colon \overline R \to \overline R$ of $v$ such that $G|_{\partial R}=id$.
\end{prop}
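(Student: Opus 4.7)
The strategy is to use the topological trivialization of the flow on $R$ from \autoref{prop:canonreg}. In the model $\partial_r$ on $\bbR^2\setminus\{0\}$, a simple closed transversal is a Jordan curve whose angular coordinate is strictly monotone; hence it winds $\pm 1$ around the origin and is crossed by each radial orbit exactly once. In particular $T$ is a global section, and we obtain coordinates $(\phi,u)\in S^1\times \bbR$ on $R$ in which orbits are $\{\phi=\mathrm{const}\}$, the flow is $\partial_u$, and $T=\{u=0\}$. The map $g$ becomes an orientation-preserving homeomorphism $\tilde g\colon S^1\to S^1$ of the orbit space.

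I would then seek $G$ of the form
\[
 G(\phi,u)=\bigl(\tilde g(\phi),\; u+c(\phi,u)\bigr),
\]
with $c\colon S^1\times \bbR\to \bbR$ continuous, $c(\phi,0)=0$ (so that $G|_T=g$) and $\partial_u c>-1$ (so that $G$ is an orbit- and time-orientation-preserving homeomorphism). The remaining task is to choose $c$ so that $G$ extends continuously to $\overline R$ as the identity on $\partial R=\alpha(R)\cup\omega(R)$.

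Each boundary component is either a hyperbolic singular point or a hyperbolic limit cycle. A singular point is a single point, and any bounded $c$ gives the required trivial extension there. For a hyperbolic limit cycle $A$ of period $L$, introduce a local normal form $(x,\theta)\in[0,x_*]\times \bbR/L\bbZ$, $\dot x=\pm\mu x$, $\dot\theta =1$, $A=\{x=0\}$. Tracking a sequence $(\phi_n,u_n)\to a\in A$ in these coordinates, one checks that the identity extension at $A$ is equivalent to the asymptotic condition
\[
 c(\phi,u)\longrightarrow \theta_0(\phi)-\theta_0\bigl(\tilde g(\phi)\bigr)\pmod{L}\qquad\text{as }u\to \mp\infty,
\]
where $\theta_0\colon S^1\to \bbR/L\bbZ$ is the homeomorphism sending an orbit in $R$ to its asymptotic phase on $A$ in the normal form. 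Since $\tilde g$ has degree $+1$ and $\theta_0$ is a homeomorphism, the map $\phi\mapsto \theta_0(\phi)-\theta_0(\tilde g(\phi))$ has degree $0$ as a map $S^1\to \bbR/L\bbZ$ and therefore admits a continuous lift $c_A\colon S^1\to\bbR$; an analogous construction at $\omega(R)$ yields $c_\Omega$, with $c_A$ or $c_\Omega$ set to $0$ if the corresponding boundary is a point.

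It remains to glue these asymptotic data. Fix $L_0>\max(\|c_A\|_\infty,\|c_\Omega\|_\infty)$, and let $c(\phi,u)$ equal $c_A(\phi)$ for $u\le -L_0$, equal $c_\Omega(\phi)$ for $u\ge L_0$, vanish at $u=0$, and be linear in $u$ on each of $[-L_0,0]$ and $[0,L_0]$. Then $c$ is continuous in $(\phi,u)$, the inequality $|\partial_u c|<1$ holds, and $G$ is the desired orbital automorphism of $\overline R$. I expect the principal obstacle to be the continuous lifting step for the mod-$L$ phase discrepancy: it uses both the orientation-preservation of $g$ (to give $\tilde g$ degree $+1$ on $S^1$) and the smooth normal form at the hyperbolic cycle, and must be verified for each shape of spiral region depicted in \autoref{fig:canonReg}.
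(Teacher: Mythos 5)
Your proof takes essentially the same route as the paper's: both pass to cylinder coordinates (your $(\phi,u)\in S^1\times\bbR$, the paper's $\Pi^*=\bbR^2/\bigl((x,y)\sim(x,y+2\pi)\bigr)$ with the flow $\partial_x$), extend $g$ to an orbit-preserving map of the cylinder, and then arrange the extension near a limit-cycle boundary so that it respects the cyclic structure there. The paper does this by pre-modifying the chart $\psi$ so that a chosen family of small transversals to the cycle becomes the slanted segments $\{y=x+n\}$ and then demanding that $\hat G$ preserve those segments; you do not normalize the chart, but instead measure the asymptotic-phase mismatch directly and absorb it into the time-shift $c(\phi,u)$. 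The two are equivalent: the degree-$0$ lift you need for $\phi\mapsto\theta_0(\phi)-\theta_0(\tilde g(\phi))$ is exactly the lift of the circle map $\hat g$ that the paper implicitly uses when it ``chooses $\hat G$'' on the slanted segments. You make that lifting/degree step explicit where the paper is terse, which is a genuine clarification, not a different method.

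One imprecision to repair. The asymptotic phase of a \emph{point} near a hyperbolic cycle is not constant along its orbit (in the normal form it advances linearly with time), so ``the homeomorphism sending an orbit in $R$ to its asymptotic phase'' is not well-defined as stated. What you want is the asymptotic phase evaluated on the section: set $\theta_0(\phi):=\theta(\phi,u)-u\pmod L$ for $u$ large, i.e.\ the asymptotic phase of the section point $(\phi,0)\in T$. With that definition $\theta_0\colon S^1\to\bbR/L\bbZ$ is well-defined and continuous, and you do not in fact need it to be a homeomorphism: degree $\pm 1$ suffices, and it holds because $T$ has winding number $\pm 1$ about the cycle. That is precisely what the lifting step uses, so with this correction the gluing and the conclusion are sound.
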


The idea of their proofs is to construct the extension $G$ in the canonical charts provided by Proposition \ref{prop:canonreg}. We may take $G$ that preserves the trajectories of $v$ in the canonical chart and coincides with $g$ on $T$. The continuity of $G$ on $\partial R$ requires a little more caution.

Now we prove Lemma \ref{lem:flex} modulo these propositions.

\begin{proof}[The proof of \autoref{lem:flex}]
Without loss of generality, assume that on $\partial D$, the vector field $v$ points inside $D$.
For $v|_{D}$, we are going to refer to the general statements that hold for vector fields on the sphere. So we will extend $v$ smoothly to the complement of $D$ by a radial vector field having one hyperbolic source in $S^2\setminus D$. The vector field $\hat v$ thus obtained is Morse-Smale.

Define the required homeomorphism $G$ to be identical on the separatrix skeleton of $\hat v$.
If a canonical region $R$ of $\hat v$ does not intersect $\partial D$, define $G|_{R}:=id$.
If  $\partial D$ intersects several strip canonical regions, the map $G$ in all  such regions is provided by Proposition \ref{prop:extension} above. Finally, if there exists a spiral canonical region $R$ that contains the whole  $\partial D$, the map $G$ is provided by Proposition \ref{prop:extension-2}.
\end{proof}
We conclude by proving Propositions \ref{prop:extension} and \ref{prop:extension-2}.
\begin{proof}[Proof of Proposition \ref{prop:extension}.]
 We will need a continuous chart $\psi \colon \Pi \to R$ that conjugates $\partial/\partial x$ to $v$ (as the one provided by Proposition \ref{prop:canonreg}), but with the following additional properties.
 \begin{itemize}
 \item $\psi$ takes the vertical open segment $\tilde T=\{0\}\times (0,1) \subset \Pi $ to $T$.
  \item $\psi$ extends continuously to $\partial \Pi$. $\psi^{-1}$ extends continuously to $\psi(\bbR \times \{0\})$ and $\psi(\bbR \times \{1\})$ (however it is possible that $\psi^{-1}$ is not well-defined on $\partial R$, see Fig. \ref{fig:canonReg}d);
  \item The diameter of a transversal $\psi (\{x\} \times (0,1))$ to $v$ tends to zero as $x\to \pm \infty$.
 \end{itemize}
To satisfy all properties, we act in the following way. Let $U(\alpha(R))$ and $U(\omega(R))$ be small neighborhoods of $\alpha(R), \omega(R)$. We take the chart $\psi$ provided by Proposition \ref{prop:canonreg}, then shift and rescale it on each horizontal line in $\Pi$ so that it takes $\tilde T$ to $T$ and provides a natural parametrization on each trajectory of $v$ in  $R\setminus U(\alpha(R)) \setminus U(\omega(R))$. This ensures the first two requirements in $R\setminus U(\alpha(R)) \setminus U(\omega(R))$. The set $\overline R \cap U(\alpha(R))$ is a family of non-singular trajectories of $v$, and it is easy to choose $\psi^{-1}$ on this set so that the second requirement is still satisfied; the same holds for $R \cap U(\omega(R))$.   After all,  $\psi$ satisfies the first and the second requirement.

Now if both $\alpha(R)$ and $\omega(R)$ are singular points, the third requirement is automatically satisfied: $\psi(\{x\} \times (0,1))$ is in a small neighborhood of $\alpha(R)$ or $\omega(R)$ for $x$ close to $\pm\infty$, so has a small diameter. If $\alpha(R)$ is a limit cycle $c$, we further modify $\psi$ in $U(\alpha(R))$. Namely, we choose a continuous family of transversals to $v$: one transversal at each point of $c$. Then we modify $\psi^{-1}$ near $c$ so that it takes the intersections of these transversals with $R$ to vertical segments in $\Pi$. We do the same in a small neighborhood of $\omega(R)$ if this set is a limit cycle as well. Clearly, all the three requirements are satisfied after such modifications.

Now, let $\hat  g:=\psi^{-1}g\psi$ be the map $g$ in $\psi$-chart. Then it extends to the map $\hat G \colon \Pi\to \Pi$ given by $\hat G(x,y) = (x, \hat g(y))$; note that $\hat G$ preserves vertical segments in $\Pi$. Since $g$ fixes endpoints of $T$, $\hat g$ fixes endpoints of $\hat T$. So $\hat G$ is identical on the upper and the lower border of $\Pi$.

Let $G\colon R\to R$ be the map $\hat G$ in $\psi^{-1}$-chart, i.e. $G=\psi \hat G \psi^{-1}$; clearly, $G|_T=g$. Now, $G$ is identical on the separatrices that bound $R$, due to the continuity of $\psi, \psi^{-1}$ and the corresponding property of $\hat G$. Moreover, if we put $G|_{\alpha(R)}=id, G|_{\omega(R)}=id$, the map $G$ is still continuous. Indeed, since $\hat G$ preserves vertical segments in $\Pi$, the map $G$ near $\alpha(R), \omega(R)$ takes transversals $\psi(\{x\}\times (0,1))$ into themselves, and the diameter of these transversals tends to $0$ (see the third requirement on $\psi$); this implies the statement.

Finally, $G|_{\partial R}=id$. This finishes the proof.
\end{proof}

\begin{proof}[Proof of Proposition \ref{prop:extension-2}.]
The proof is analogous to the proof of Proposition \ref{prop:extension} but simpler, because we only need the continuity of $G$ on $\alpha(R), \omega(R)$. Note that the vector field $d/dr$ in  $\bbR^2\setminus \{0\}$  is topologically equivalent to the vector field $d/dx$ in the strip with identified borders $\Pi^* := \bbR^2/ ((x,y) \sim (x,y+2 \pi) )$. We will find a continuous chart $\psi \colon \Pi^*  \to R$ that conjugates $\partial/\partial x$ to $v$ and has two additional properties:
 \begin{itemize}
 \item $\psi$ takes the vertical segment $\tilde T=\{0\}\times [0,1] \subset \Pi^* $ to $T$.
  \item The diameter of a transversal $\psi (\{ y=x+ n\} )$ to $v$ tends to zero as $n \to \pm \infty$.
 \end{itemize}
As before, if $\alpha(R)$ and $\omega(R)$ are singular points, the second requirement is trivial. To satisfy this requirement near a limit cycle, we choose a family of small transversals to this cycle and require that $\psi$ takes slanted segments $\{y=x+n\}$, $n$ large, to these transversals. The definition of $\hat g$ is as before. When we extend $\hat g$ to $\hat G$, we choose $\hat G$ so that it preserves horizontal lines and also preserves slanted segments $\{ y=x+ n\}$ for large $n$. As before, $G=\psi \hat G \psi^{-1}$ is a continuous map in $R$ that satisfies $G|_{T}=g$. Moreover, we may extend $G$ continuously to $\alpha(R), \omega(R)$ by the identity map, because the map $G$ preserves short transversals $\psi (\{ y=x+ n\} )$ to $v$. This finishes the proof.
\end{proof}

\section{Structural stability}
\label{sec-str}
In this section Theorem~\ref{thm:stab1}, hence, Theorem~\ref{thm:stab}, is proved.

\subsection{Reduction to the Classification theorem.}

\begin{prop}   \label{prop:close}   For $C^4$-close \vfs $v, w$ of class PC, the corresponding characteristic pairs of sets $A^\pm (v), A^\pm (w)$ are respectively close.
\end{prop}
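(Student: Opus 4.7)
The plan is to reduce the proposition to the continuous dependence of three ingredients on the vector field $v$: (i) the hyperbolic saddles whose separatrices wind toward the parabolic cycle, (ii) the Takens generator $u_v$ (and hence the time function $T^\pm_v$), and (iii) the first intersection of each such separatrix with a fixed cross-section. Together these will yield continuous dependence of $A^\pm(v)$ as marked subsets of the coordinate circles $S^1_\pm$.

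First I would set up a common framework. For $w$ that is $C^4$-close to $v$, classical structural stability of hyperbolic singular points and hyperbolic limit cycles yields close hyperbolic saddles $E_m^w, I_k^w$ of $w$; the cross-section $\G$ remains transversal to the close parabolic cycle $\g_w$, so $\G$ and the base points $b^\pm$ serve for both $v$ and $w$. The Poincaré map $P_w$ on $\G$ is $C^4$-close to $P_v$, and after rescaling the chart on $\G$ so that both have the form $x + x^2 + \dots$ (the rescaling factor is a continuous function of the nonzero second-order coefficient of $P$), the parabolic germs remain $C^4$-close near $0$.

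Using uniqueness in Takens' Theorem~\ref{thm:pmap} together with the fact that the Taylor coefficients of the generator $u$ are explicit polynomials in those of $P$, I would conclude that $u_w$ is close to $u_v$ on the compact set $[b^-,P_v(b^-)]\cup[b^+,P_v(b^+)]$, which lies away from the zero of $u$. Hence the time functions $T^\pm_w(b)=\int_{b^\pm}^{b}dx/u_w(x)$ converge uniformly to $T^\pm_v$ on fixed compacta in $\G^\pm\setminus\{O\}$. For each separatrix $l_j^\pm$ of $v$ with representative $d_j\in D^\pm(v)\cap[b^\pm,P_v(b^\pm))$, chosen by a small shift of $b^\pm$ if needed to lie strictly inside the fundamental domain, the corresponding separatrix of $w$ leaves the close saddle and, on a bounded time interval, meets $\G^\pm$ at a point $d_j^w$ close to $d_j$; continuous dependence of ODE solutions on parameters then forces $d_j^w\in[b^\pm,P_w(b^\pm))$, so $d_j^w\in D^\pm(w)$. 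Combining $d_j^w\to d_j$ with uniform convergence of time functions yields $a_j^\pm(w)=T^\pm_w(d_j^w)\pmod{\bbZ}\to a_j^\pm(v)$. The marked equivalence relation is preserved because the saddles of $w$ are naturally identified with those of $v$.

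The main obstacle is the continuous dependence of the Takens generator $u$ on the parabolic germ $P$, since Theorem~\ref{thm:pmap} is stated only as existence and uniqueness. This is handled by combining the algebraic dependence of the Taylor coefficients with a parameter-uniform version of the resummation procedure underlying Takens' theorem; the upshot is $C^k$-closeness of $u_w$ to $u_v$ on compact sets away from $0$ for any finite $k$, given sufficient smoothness and closeness of the germs. Once this continuity is in hand, all remaining assertions reduce to standard continuous dependence of ODE solutions on parameters over bounded time intervals.
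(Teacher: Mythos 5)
Your overall decomposition is the same as the paper's: fix a common cross-section, identify close saddles and intersection points, and reduce the whole question to showing that the Takens generator $u$ depends continuously on the parabolic germ $P$, so that the time functions $T^\pm$ are close. You have also correctly identified that this last step is where the difficulty lies. But the argument you give for it does not work, and the "fix" you offer is an assertion rather than a proof.

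Concretely, the sentence that infers closeness of $u_w$ to $u_v$ on $[b^-,P_v(b^-)]\cup[b^+,P_v(b^+)]$ from "uniqueness in Takens' theorem together with the fact that the Taylor coefficients of the generator are explicit polynomials in those of $P$" is false as stated. Closeness of finitely many Taylor coefficients at the origin controls the jet of the generator, but a $C^\infty$ generator differs from its polynomial or rational model by a flat term, and flat terms are in no way controlled by Taylor data; two germs with identical $\infty$-jets can have generators that differ substantially on any fixed compact set away from $0$. This is precisely why the paper devotes Proposition~\ref{prop:takpar} and its sub-propositions to the question. The actual mechanism there is: normalize to $P=x+x^2+\cdots$, read off $a$ from $P^{(3)}(0)$, write $P=P_0+R$ with $P_0=g^1_{u_a}$ and $|R|\le C|x^4|$, $|R'|\le C'|x^3|$ with \emph{small} $C,C'$ when $P$ is $C^4$-close to the model, pass to the rectifying chart $t$, and solve the Abel equation $h=h\circ\hat P+\hat R$ by the series $\sum \hat R\circ\hat P^k$, estimating it and its derivative uniformly. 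That chain of estimates is what produces $C^0$-closeness of the generators, and it is the substantive content of the proof of Proposition~\ref{prop:close}. Your second paragraph ("parameter-uniform version of the resummation procedure") names this but does not carry it out, so the proposal leaves the decisive step unproved. A further small inaccuracy: the paper obtains only $C^0$-closeness of the generators (which suffices, since $T^\pm$ are integrals of $1/u$), whereas you claim $C^k$-closeness for all finite $k$; that stronger statement would need more regularity hypotheses and a separate argument.
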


This proposition is proved in the next two sections. Let us now check that two close local $\pc$ families  satisfy the assumptions of Theorem \ref{thm:class} part 2.  This will imply Theorem~\ref{thm:stab1}.

Let $V$ be an unfolding of $v$: $V = \{ v_\e |\e \in (\rr ,0)\} , \ v_0 = v$. Let $w \in PC $ be so close to $v$ that the Sotomayor theorem is applicable: $w$ is orbitally topologically equivalent to $v$. Moreover, let $v$ and $w$ be so close that the corresponding pairs of characteristic sets $A^\pm (v), A^\pm (w)$ are close, see Proposition \ref{prop:close}. Then they are equivalent in the sense of Definition~\ref{def:seteq:char}.  Now let $W$ be a $\pc$-family that unfolds $w$. All the assumptions of Theorem~\ref{thm:class} part 2 for the families $V$ and $W$ are justified. Hence, they are equivalent. Therefore, the family $V$ is structurally stable. Theorem~\ref{thm:stab1} is proved modulo Proposition~\ref{prop:close}.

\subsection{Takens theorem with a parameter}

The main step in the proof of Proposition~\ref{prop:close} is to check that canonical coordinates on the \nccs for close \vfs of class $PC$ are also close.
Equivalently, we should prove that the time functions $T^{\pm}$ defined in Sec. \ref{sub:marfin} are close for close vector fields. It is sufficient to check that the generators of close parabolic germs are close.

\begin{prop}   \label{prop:takpar}    Suppose that two parabolic germs are $C^4$-close. Then their generators are $C$-close. In more detail, for any
parabolic germ \tes a representative $P$ with the following property. Let $U$ be the domain of $P$. Then \tes a \nbd $V \subset U$ of 0 \st any map $Q$
which is sufficiently $C^4$-close to $P$ in $U$ has a generator $C$-close to that of $P$ in $V$.
\end{prop}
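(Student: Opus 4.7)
The plan is to exploit that the Takens generator is determined by the Fatou coordinate: if $\Phi_P$ satisfies the Abel equation $\Phi_P(P(x)) = \Phi_P(x) + 1$, then in the chart $y = \Phi_P(x)$ the map $P$ is the unit translation, generated by $\partial/\partial y$, and pulling back gives $u_P(x) = 1/\Phi_P'(x)$ on each petal. So the task reduces to showing that $C^4$-closeness of the germs $P$ and $Q$ forces $C^1$-closeness of the Fatou coordinates on a fundamental domain of each petal; $u_P = 1/\Phi_P'$ and $u_Q = 1/\Phi_Q'$ will then be $C^0$-close. I would fix the representative $P$ on a sufficiently small symmetric neighborhood $U$ so that this construction, carried out below separately in the two petals, converges uniformly.

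On the attracting petal I would pass to the classical coordinate $\xi = -1/x$, in which $P$ becomes $\hat P(\xi) = \xi + 1 + O(1/\xi)$, a near-translation whose coefficients in $1/\xi$ are polynomials in the $C^4$-jet of $P$. The Fatou coordinate of $\hat P$ admits the asymptotic form $\tilde\Phi_P(\xi) = \xi + c_P\log\xi + \psi_P(\xi)$ with $\psi_P$ bounded and $c_P$ the formal residue read off from $P$; after this log-subtraction, the functional equation for $\psi_P$ takes the form $\psi_P(\hat P(\xi)) - \psi_P(\xi) = r_P(\xi)$ with $r_P = O(1/\xi^2)$, and $\psi_P$ and $\psi_P'$ are expressed as uniformly convergent series $\sum_n r_P(\hat P^n(\xi))$ of compositions along iterates. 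Parametric continuity is the point: for $P,Q$ close in $C^4$, the residues $c_P, c_Q$ are close, the remainders $r_P, r_Q$ are $C^1$-close, the first $N$ iterates $\hat P^n, \hat Q^n$ stay $C^1$-close on a compact fundamental domain (using $\hat P'(\xi) = 1 + O(1/\xi^2)$ to keep $(\hat P^n)'$ uniformly bounded in $n$), and both tails $\sum_{n\ge N}$ are dominated by $O(1/N)$ uniformly. Splitting the series at $N$ large and then taking $Q$ close enough to $P$ gives $C^1$-closeness of $\tilde\Phi_P, \tilde\Phi_Q$, hence $C^0$-closeness of $\hat u_P = 1/\tilde\Phi_P'$ and $\hat u_Q = 1/\tilde\Phi_Q'$ on the fundamental domain. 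Translating back via $u(x) = x^2\,\hat u(-1/x)$ gives $C^0$-closeness of $u_P, u_Q$ on a subinterval $[-\delta_0,-\delta_1]$, and the analogous construction on the repelling petal handles $[\delta_1,\delta_0]$.

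At $x=0$ itself both generators vanish to second order, and the first few Taylor coefficients of $u_P$ are polynomials in the first few Taylor coefficients of $P$ (extracted algebraically from $P = g^1_{u_P}$); thus the $C^4$-closeness of $P, Q$ forces $|u_P(x)-u_Q(x)| \le \varepsilon|x|^2$ on a small fixed disc $\{|x|<\delta_1\}$. Gluing this with the two petal estimates yields $C^0$-closeness of $u_P$ and $u_Q$ on a fixed neighborhood $V = \{|x|<\delta_0\}$ of $0$. The main obstacle is the parametric bookkeeping in the Fatou coordinate construction: ensuring that the defining series for $\psi_P, \psi_P'$ converges uniformly in $P$ over a $C^4$-neighborhood of the reference germ, and in particular that $(\hat P^n)'$ stays uniformly bounded in $n$ and in the parameter. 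This is manageable because $\hat P$ is an $O(1/\xi^2)$-perturbation of a translation after log-subtraction, but the estimate must be carried out carefully and uniformly in the parameter — this is where the $C^4$ regularity (rather than a weaker $C^k$) is used.
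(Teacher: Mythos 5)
Your approach is essentially the paper's: rectify near infinity, solve the Abel equation for the Fatou coordinate by the series $\sum \hat R\circ\hat P^k$, and pass $C^4$-closeness of the germs through to $C^1$-closeness of the series via tail estimates together with a uniform bound on $(\hat P^k)'$. Two bookkeeping differences are worth noting: the paper absorbs the logarithmic term into the rectifying chart $t=-1/x-a\log x$ (you subtract $c_P\log\xi$ from the Fatou coordinate instead), and the paper first changes coordinates so that $P=g^1_{u_a}$ exactly --- then only $Q$ has a nonzero remainder, which is \emph{small} rather than merely close to that of $P$, so the estimates simplify to ``small perturbation $\Rightarrow$ small $h$'' with no need for a head/tail comparison of two series. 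Your version, keeping $P$ un-normalized, is workable but requires the finite-head plus dominated-tail split you describe.

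One genuine slip to repair: the justification of the bound near $x=0$. Closeness of the first few Taylor coefficients of $u_P,u_Q$ at $0$ (which are polynomial in the $4$-jets of $P,Q$) does \emph{not} by itself yield a uniform pointwise bound $|u_P(x)-u_Q(x)|\le\varepsilon|x|^2$ on a fixed disc, since the Taylor remainders of $u_P,u_Q$ are not controlled by the $C^4$-jets. In fact no separate central argument is needed: writing $u_Q(x)=x^2/\tilde\Phi_Q'(\xi)$ with $\xi=-1/x$, the uniform estimates you established on $\tilde\Phi_Q'$ (namely $\tilde\Phi_Q'\to 1$ as $\xi\to\infty$, uniformly over $Q$ near $P$, with $\tilde\Phi_P'-\tilde\Phi_Q'$ small there) already give $|u_P(x)-u_Q(x)|\le C\varepsilon\,x^2$ on the entire punctured neighborhood of $0$, and one extends by $u_P(0)=u_Q(0)=0$. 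This is also how the paper gets the $C$-closeness down to $x=0$, via $u_Q = u_b/H'\circ t_b$ with $H'$ uniformly close to $1$.
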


\begin{proof}   First, recall the main steps of the proof of Takens Theorem (see Theorem \ref{thm:pmap}), according to \cite{M}  and \cite{I90}.  Let $P (x)= x + x^2 + (a + 1)x^3 +\dots $ be a real smooth germ. Then $P$ is formally equivalent to the time one shift $P_0$ along the vector field $u_a$:
\be  \label{eqn:norm}
P_0 = g^1_{u_a}, \ u_a = \frac {x^2}{1-ax}.
\ee
The maps $P$ and $P_0$ have the same $3$-gets at $0$. Hence,
\begin{equation}\label{eqn:r}
P = P_0 + R, \ |R| \le C|x^4|\text{ and } |R'|\le C' |x^3|
\end{equation}
for some $C, C' > 0$. In what follows, different constants depending on the functions considered are denoted by $C$ with subscripts and superscripts. The chart
$$
t = -\frac 1 x - a\ln x
$$
rectifies the \vf $u_a$ and brings a \nbd of $0$ to a \nbd of infinity. The maps $P$ and $P_0$ written in the chart $t$ are denoted by $\hat P, \hat P_0$. Clearly, $\hat P_0$ is a mere translation by $1$: $\hat P_0(t) = t + 1$. Let
$\hat P = \hat P_0 + \hat R = t + 1 + \hat R$. Then $|\hat R| < C|t^{-2}|$, see Proposition \ref{prop:Tak-1} below. Let us find a map $H = id + h$ defined near infinity that conjugates $\hat P$ and $\hat P_0$. This map is found separately in two half-\nbds $(\rr^-, \infty ), \ (\rr^+, \infty )$. Let us find it in $(\rr^+, \infty )$. Note that $h$ satisfies $ \hat P_0 \circ (id +h)=(id+h)\circ \hat P $, which implies the Abel equation on $h$:
$$
h = h \circ \hat P + \hat R.
$$
The solution of this equation in $(\rr^+, \infty )$ has the form:
\begin{equation}\label{eqn:abel}
h^+ = \sum_{k=0}^\infty \hat R \circ \hat P^k.
\end{equation}
Due to Proposition \ref{prop:Tak-2} below, this series converges on $\rr^+$ near infinity; moreover, if $C$, $C'$ in \eqref{eqn:r} are small, then $h$ is $C^1$-small.  Finally, we have found the desired generator $u$ of $P$.  In the coordinate $H=id+h^+$, this generator is a unit vector field $\mathbf{e}$. In the coordinate $t$, it equals $(H^{-1})_* \mathbf{e}$. In the initial coordinate, it equals $u = (t^{-1} \circ H^{-1})_* \mathbf{e}$.
Together with~\eqref{eqn:abel}, this provides the desired formula for $u$ in $(\rr^+, 0)$.

A similar formula holds in $(\rr^-,0)$ with the only difference that in this case, the solution of the Abel equation is
\begin{equation}\label{eqn:abel1}
h = h^- = -\sum_{k=1}^\infty \hat R \circ \hat P^{-k}.
\end{equation}

We stop here and do not check the assertion of Theorem \ref{thm:pmap} that $u$ is infinitely smooth. See \cite{M}  and \cite{I90} for the rest of the proof of Takens theorem.

\begin{prop}
\label{prop:Tak-1}
In the above assumptions, $|\hat R| < C_1|t^{-2}|$ and $|\hat R'|\le C_1' |t^{-3}|$; if the constants $C,C'$ in \eqref{eqn:r} are small, then $C_1, C_1'$ are also small.
\end{prop}

\begin{proof} By definition of $\hat R$, $\hat R = \hat P-\hat P_0$. So
$$
\hat R \circ t = t \circ P - t \circ P_0 = t\circ (P_0+R) - t\circ P_0.
$$
Hence, for some $\theta \in [0,1]$,
$$
|\hat R(t(x))| \le  t'\circ (P_0 + \theta R) \cdot|R(x)| \le \tilde C x^{-2}\cdot x^4 = \tilde C x^2 < C_1 t^{-2}.
$$
Moreover, by the same argument, for some $\theta\in [0,1]$,
$$
\left|\frac {d}{dx}\hat R \circ t \right|  = \left|t'\circ (P_0+R) \cdot (P_0'+R') - t'\circ (P_0) \cdot P_0' \right| \le \left|t''\circ (P_0 + \theta R)\cdot R \cdot P_0'\right| + \left|t'\circ (P_0 + R)\cdot R'\right| \le \tilde C_1 |x|;
$$
dependence on $x$ in the left and the middle part of the display is skipped for brevity. Hence,
\begin{equation}\label{eqn:deriv}
|\frac {d}{dt}\hat R| \le \frac{|\frac {d}{dx}\hat R \circ t|}{|t'(x)|} \le \tilde C'_1 {|x|}^3 \le C_1'{|t|}^{-3}.
\end{equation}
Note that if $C$, $C'$ in \eqref{eqn:r} are small, then $C_1$ and $C_1'$ are also small.
\end{proof}

\begin{prop}
 \label{prop:Tak-2}
In the above assumptions, the series for $h^+$ converges in $(\bbR, +\infty)$; if the constants $C$, $C'$ in \eqref{eqn:r} are small, then $h^+$ is small in $C^1$ metric.
\end{prop}

\begin{proof} The series~\eqref{eqn:abel} converges because $\hat R$ decreases as $C_1 t^{-2}$, and $\hat P^k$ increases as an arithmetic progression. Moreover, for small  $C$, $C'$ in \eqref{eqn:r}, $C_1$ is also small, thus $h$ is small in $C$ metric near infinity.

To estimate $h'$, note that $h' = \sum (\hat R' \circ \hat P^k) \cdot (\hat P^k)'.$ The function $\hat R'$ is small by \eqref{eqn:deriv}.
The sequence $\hat P^k$ increases as an arithmetic progression. It remains to prove that the sequence $(\hat P^k)'$ is bounded; this will imply that $h'$ is small in $C$-metric.

Let us estimate $\hat P^k(t)'$  for $t \in \rr^+$ large. This derivative is a product of values of the function $\hat P = 1+ \hat R'$ along the first $k$ points of the orbit of $t$ under the map $\hat P$. This orbit grows as an arithmetic progression. The logarithm of the product mentioned above is no greater than the sum $ c \sum_{k=1}^{\infty} |\hat R' \circ \hat P^k (t)|$. By \eqref{eqn:deriv}, this sum is uniformly bounded in a \nbd of infinity. Hence, the sequence $(\hat P^k)'$ is uniformly bounded for large $t$ as required.
\end{proof}

Let us now prove Proposition~\ref{prop:takpar}. Takens theorem implies that the normalizing chart that conjugates $P$ and $P_0$ is infinitely smooth. From now on, we switch to this chart; this reduces the general case to the case when $P = g_{u_{a}}^1$ for some $a$, and $Q$ is $C^4$-close to it.

Put $Q=Q_0+R$ where $Q_0 = g_{u_b}^1$ for some $b$. Since $a = \frac {P^{(3)}(0)}{6} - 1, \ b = \frac {Q^{(3)}(0)}{6} - 1,$ we conclude that $a$ and $b$ are close. As above, $|R| \le C x^4$, $|R'|\le C' |x^3|$; moreover, $C, C'$ are small, because $P-P_0$ is zero and $Q,Q_0$ are $C^4$-close to $P,P_0$.

Now, let us repeat the arguments from the proof of Takens theorem for $Q = Q_0+R$.  Let $t_b = -\frac 1x + b \ln x$ be the rectifying chart for the \vf $u_b$, and $\hat Q = t + 1 + \hat R$ be the map $Q$ written in this chart. Let  $h$ be the map given by~\eqref{eqn:abel} and~\eqref{eqn:abel1} for this $\hat R$; put $H=id+h$. Then the generator $u_Q$ for the map $Q$ is given by $u_{Q} = (t_b^{-1} H^{-1})_* \mathbf e$. Due to Proposition \ref{prop:Tak-2}, $h$ is $C^1$-small, thus $H$ is $C^1$-close to identity; so $u_{Q}$ is close to $(t_b^{-1})_{*} \mathbf e = u_b$. Finally,  $u_b$ is close to $u_a$ because $a$ and $b$ are close as we showed above. So $u_Q$ is close to $u_a$. This proves Proposition~\ref{prop:takpar}.

\end{proof}

\subsection{Proximity of the characteristic sets}

Here we complete the proof of Proposition~\ref{prop:close}.

Let $v$ and $w$ be two close \vfs of class $PC, \g $ and $\t \g $ be their (close) parabolic cycles, and $C^\pm $ be their common \nccs that separate the parabolic cycles from the rest of the sphere. The \vfs $v$ and $w$ are orbitally topologically equivalent as explained above. Let $E_m, \t E_m, \ m = 1, \dots , M$ and $I_k, \t I_k$, $k = 1,..., K$ be the saddles of $v$ and $w$ respectively, whose separatrices wind to $\g $ and $\t \g $ as described above; $E_m$ and $\t E_m$, $I_k$ and $\t I_k$ are close to each other.

Then the intersection points of the separatrices of these saddles with the \nccs $C^{\pm}$ are close.   But we have to prove that these points are close on the coordinate circles with the canonical coordinates $\ph_0^{\pm} , \t \ph_0^{\pm}$. The latter statement follows from Proposition~\ref{prop:takpar}. This proves Proposition~\ref{prop:close}, and completes the proof of the Structural stability Theorems~\ref{thm:stab} and~\ref{thm:stab1}.

\section{Bifurcation support vs \lbs }
\label{sec-support}
In \cite{AAIS},    Arnold introduced a notion of a \emph{bifurcation support}. 
Begin with the quotation from Arnold.

\emph{Although even \lb in high codimensions  (at least three) on a disc are not fully investigated, it is natural to discuss \nb in multiparameter families
of \vfs on a two-dimensional sphere. For their description, it is necessary to single out the set of trajectories defining perestroikas in these families.}

\begin{defin}
  A finite subset of the phase space is said to \emph{support a bifurcation} if there exists an arbitrarily small \nbd of this subset and
a \nbd of the bifurcation values of the parameter (depending on it) such that, outside this \nbd of the subset, the deformation
(at values of the parameter from the second \nbd )   is topologically trivial.
\end{defin}

{\begin{defin}    The \emph{\bif support} of a \bif is the union of all minimal sets supporting a \bif
(``minimal'' means not containing a proper subset that supports a bifurcation).
\end{defin}

\begin{defin}    Two deformations of \vfs with \bif supports $\Sigma_1$ and $\Sigma_2$ are said to be \emph{equivalent on
their supports}  if there exist arbitrarily small \nbds of the supports, and \nbds of the \bif values of the parameters
depending on them, such that the restrictions of the families to these \nbds of the supports are topologically equivalent,
or weakly equivalent, over these \nbds of \bif values.
\end{defin}

The quotation ends here. The following theorem shows that the bifurcation support is insufficient for the description of the bifurcations.

\begin{thm}   \label{thm:lbs}    There exist two orbitally topologically equivalent \vfs of class $PC$, whose generic unfoldings in one-parameter families are equivalent on
their supports, but not sing-equivalent on the whole sphere.
\end{thm}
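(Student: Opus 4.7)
The plan is to exploit the gap between equivalence on the bifurcation support (which sees each sparkling saddle connection in isolation, in its own neighborhoods in phase and parameter space) and sing-equivalence (which by Theorem \ref{thm:class} part 1 must preserve the equivalence class of the characteristic pair, hence the cyclic order of the bifurcation values $\e_{kmn}$). If the characteristic pairs of two orbitally topologically equivalent $PC$-fields disagree only in this order, the corresponding unfoldings will be separated by sing-equivalence but not by equivalence on supports.

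First I would construct two vector fields $v, w \in PC$ that are orbitally topologically equivalent and whose characteristic pairs $A^\pm(v)$, $A^\pm(w)$ are both non-synchronized but \emph{not} equivalent in the sense of Definition \ref{def:seteq:char}. The smallest useful case is $|A^+| = |A^-| = 2$, where $\Lambda(A^\pm)$ has four pairwise distinct elements and Definition \ref{def:seteq:char} collapses to equality of cyclic orders. Pick two such marked pairs whose four-tuples of differences realize different cyclic orders on $[0,1)$, and apply Theorem \ref{thm:real1}. The realization procedure (Lemma \ref{lem:marked} followed by the gluing along the annulus $U$ of Section ``Realization Theorem for the sphere'') can be executed with the \emph{same} combinatorial skeleton of saddles, sinks, sources, and parabolic cycle in both cases, so that $v$ and $w$ are orbitally topologically equivalent; this is essentially the content of Theorem \ref{thm:noneq}.

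Second, failure of sing-equivalence is immediate from Theorem \ref{thm:class} part 1 applied contrapositively: if the generic unfoldings $V, W$ were sing-equivalent, their characteristic pairs would have to be equivalent, contradicting the construction.

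Third, I would verify equivalence on the bifurcation supports. The support of the full family is the countable union, over $k, m, n$, of the supports of the individual sparkling saddle connections at $\e_{kmn}$. The minimal support of one such bifurcation is its connecting separatrix together with the two hyperbolic saddles $E_m$ and $I_k$; in an arbitrarily thin tubular neighborhood of this arc and an arbitrarily small parameter-interval around $\e_{kmn}$, the family is a standard one-parameter saddle-connection unfolding between two hyperbolic saddles, made out of $n$ extra Dehn twists around $\gamma$ that are homeomorphisms. This local model depends on neither $n$ nor the global order. For $W$, the matching bifurcation occurs at some $\delta_{kmn'}$ between the images $\tilde E_m, \tilde I_k$ of the same saddles under the orbital equivalence of $v_0$ and $w_0$, with the same local model. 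Matching bifurcations one at a time, each with its own local phase- and parameter-homeomorphism (Arnold's definition permits the neighborhoods to depend on the support), yields equivalence on the supports.

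The main obstacle is the first step: producing $v$ and $w$ that are simultaneously orbitally topologically equivalent \emph{and} carry inequivalent characteristic pairs. The global topology of a $PC$-field (Mayer-graph data) is fixed by the combinatorial skeleton of saddles, separatrices, and limit sets, while the characteristic pair records the finer cyclic order of separatrix intersections with $C^\pm$ as measured in the canonical time-charts $\ph_0^\pm$. The freedom to change the latter without disturbing the former comes from the flexibility in the disk construction of Lemma \ref{lem:marked} and the choice of transversal loops, and is precisely what the proof of Theorem \ref{thm:noneq} exploits; any explicit example built there suffices here as well.
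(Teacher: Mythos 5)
Your steps (1) and (2) coincide with the paper's: one constructs, via Theorem \ref{thm:real1} and the flexibility in Lemma \ref{lem:marked}, two $PC$ fields with the same combinatorial skeleton (hence orbitally topologically equivalent) but with non-synchronized and inequivalent characteristic pairs — the paper just points to Figure \ref{fig:noneq} — and then Theorem \ref{thm:class} part~1 contrapositively kills sing-equivalence.

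Where you diverge, and where there is a real issue, is in step (3). You take the bifurcation support of the unfolding to be the union of the supports of the sparkling-saddle-connection bifurcations, i.e.\ the connecting arcs together with the saddles $E_m, I_k$, and then you match these bifurcations one at a time, each inside its own small phase- and parameter-neighborhood. The paper instead identifies the bifurcation support, in Arnold's sense, for the single bifurcation at $\e=0$: a bifurcation carrier is a point on $\gamma$, and the union of minimal carriers is the parabolic cycle $\gamma$ itself — nothing more. Restricting to a tubular neighborhood $U$ of $\gamma$ and a small parameter interval about $0$, the family is nothing but the standard splitting/vanishing of the parabolic cycle: the separatrices of the distant saddles pass through $U$ as ordinary orbits, indistinguishable there from their neighbours, so they carry no information. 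Hence any two such restricted families are weakly equivalent, by the Takens and Ilyashenko--Yakovenko normal form (\autoref{thm:IYa}). That one observation replaces your per-bifurcation matching.

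The reason your route needs scrutiny is that Arnold's notion of equivalence on supports asks for a \emph{single} equivalence of the restricted families over a parameter neighborhood depending on the chosen neighborhood of the support, not a separate local equivalence for each bifurcation value $\e_{kmn}$. If the neighborhood of the support includes the saddles and connecting arcs, the restricted family actually records the cyclic order of the $\e_{kmn}$, which is precisely the invariant distinguishing $V$ from $W$; a literal reading would then say the families are \emph{not} equivalent on supports, defeating the point. Your "match one bifurcation at a time" reading is a charitable gloss on Arnold's deliberately vague prose, and it does reach the intended conclusion, but the paper avoids the ambiguity entirely by keeping the support minimal — the cycle $\gamma$ alone — and letting the universality of the Poincaré normal form near $\gamma$ do all the work. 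You should either argue, as the paper does, that the support is only $\gamma$, or else justify explicitly why Arnold's definition admits per-bifurcation neighborhoods.
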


This is an improved version of Theorem \ref{thm:noneq}.

\begin{proof}    Consider a \vf $v$ of class $PC$. A bifurcation carrier is an arbitrary point on the parabolic cycle $\g $ of this field. The bifurcation support is the cycle $\g $ itself. Under the unfolding of $v$, the cycle $\g $ splits in two on one
side of the critical value of the parameter, and vanishes on the other side. For any two \vfs of class $PC$ their deformations are equivalent on
their supports.

Consider now two \vfs of class $PC$ with non-equivalent pairs of characteristic sets, see Figure \ref{fig:noneq} for example. By Theorem \ref{thm:class}, the unfoldings of these fields are not sing-equivalent. \end{proof}

\begin{figure}
   \centering
\includegraphics[scale=0.3]{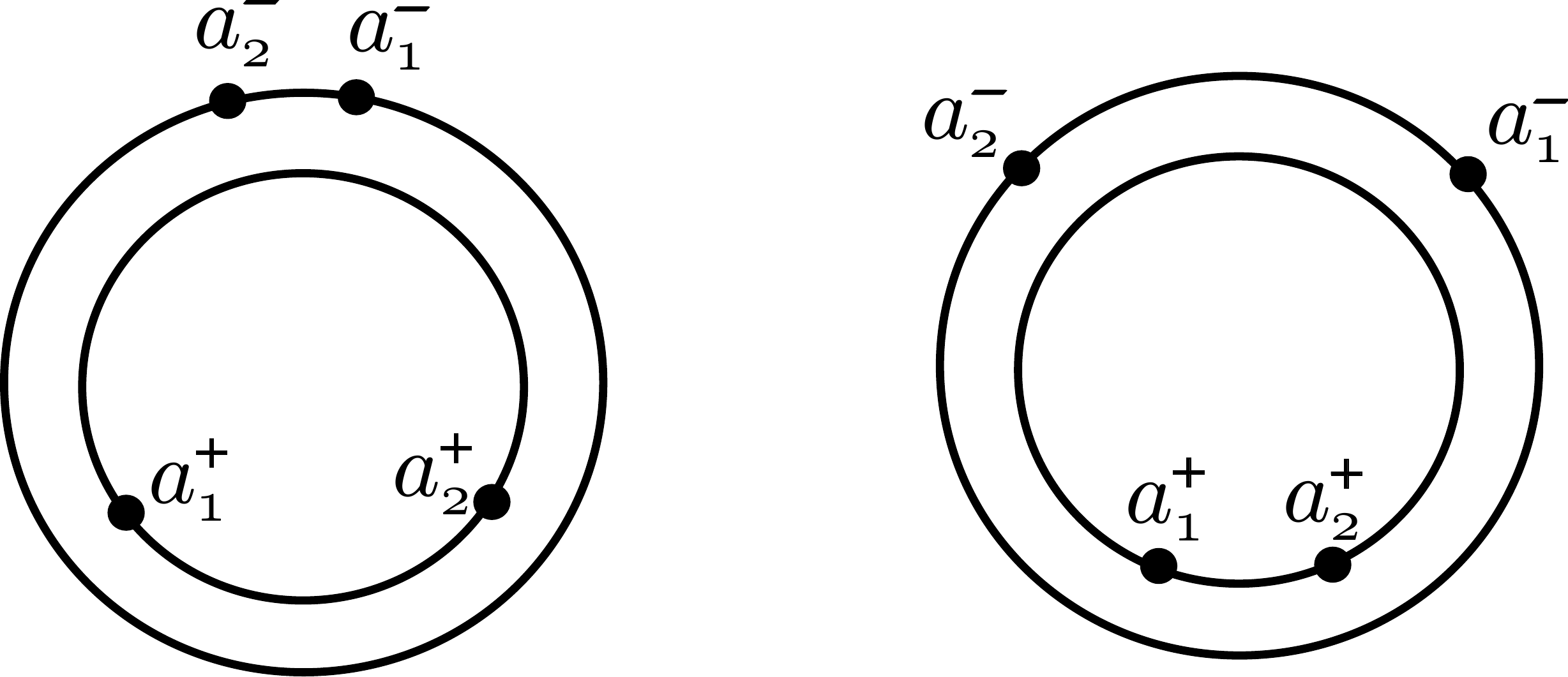}
\caption{Non-equivalent pairs of characteristic sets on coordinate circles}\label{fig:noneq}
\end{figure}
This proves Theorem~\ref{thm:lbs}. Simultaneously Theorem~\ref{thm:noneq} is proved.

\end{document}